\documentclass[11pt]{amsart}
\usepackage{geometry}                
\usepackage{amsmath,amsthm,amssymb,latexsym,epic,bbm,comment}
\usepackage{tikz-cd}
\usepackage{graphicx,enumerate,stmaryrd, xcolor, todonotes}
\usepackage[all,2cell]{xy}
\usepackage[active]{srcltx}
\usepackage[parfill]{parskip}
\UseTwocells
\usepackage[
colorlinks=true,
linkcolor=black, 
anchorcolor=black,
citecolor=black, 
urlcolor=black, 
]{hyperref}
\usepackage[e]{esvect}
\usepackage{todonotes}
\usepackage{hyperref}
\usepackage{enumerate}

\newtheorem{theorem}{Theorem}[section]

\newtheorem{lemma}[theorem]{Lemma}
\newtheorem{proposition}[theorem]{Proposition}

\newtheorem{corollary}[theorem]{Corollary}
\theoremstyle{definition}

\newtheorem{remark}[theorem]{Remark}

\newcommand{\mone}{{\,\text{-}1}}

\newcommand{\eps}{\varepsilon}

\font\sc=rsfs10
\newcommand{\cC}{\sc\mbox{C}\hspace{1.0pt}}
\newcommand{\cH}{\sc\mbox{H}\hspace{1.0pt}}

\newcommand{\cG}{\sc\mbox{G}\hspace{1.0pt}}

\newcommand{\cR}{\sc\mbox{R}\hspace{1.0pt}}
\newcommand{\cI}{\sc\mbox{I}\hspace{1.0pt}}

\newcommand{\cB}{\sc\mbox{B}\hspace{1.0pt}}

\newcommand{\cV}{\sc\mbox{V}\hspace{1.0pt}}
\newcommand{\cX}{\sc\mbox{X}\hspace{1.0pt}}

\newcommand{\Hom}{\mathrm{Hom}}

\newcommand{\End}{\mathrm{End}}

\newcommand{\lmod}{\text{-}\mathrm{mod}}

\newcommand{\add}{\operatorname{add}}

\newcommand{\Id}{\operatorname{Id}}

\newcommand{\rF}{\mathrm{F}}

\newcommand{\J}{\mathcal{J}}
\def\H{{\mathcal{H}}}
\def\L{{\mathcal{L}}}

\newcommand{\R}{\mathcal{R}}
\def\S{\mathcal{S}}
\newcommand{\X}{\mathcal{X}}
\newcommand{\Y}{\mathcal{Y}}

\newcommand{\id}{\mathrm{id}}
\newcommand{\one}{\mathbbm{1}}

\newcommand{\bfM}{\mathbf{M}}

\newcommand{\bfC}{\mathbf{C}}

\newcommand{\ti}{\mathtt{i}}

\title{Basic Hopf algebras and symmetric bimodules}
\author{Katerina Hristova, Vanessa Miemietz}

\begin{document}
\maketitle

\begin{abstract}
Motivated by the so-called $\H$-cell reduction theorems, we investigate certain classes of bicategories which have only one $\H$-cell apart from possibly the identity. We show that $\H_0$-simple quasi fiab bicategories with unique $\H$-cell $\H_0$ are fusion categories. We further study two classes of non-semisimple quasi-fiab bicategories with a single $\H$-cell apart from the identity. The first is $\cH_A$, indexed by a  finite-dimensional radically graded basic Hopf algebra $A$, and the second is $\cG_A$, consisting of symmetric projective $A$-$A$-bimodules. We show that $\cH_A$ can be viewed as a $1$-full subbicategory of $\cG_A$  and classify simple transitive birepresentations for $\cG_A$. We point out that the number of equivalence classes of the latter is finite, while that for $\cH_A$ is generally not.
\end{abstract}

\section{Introduction}
 
Since the begining of the century, tremendous progress in representation theory has been made using ideas of categorification, see e.g. \cite{CR, EW, W}. Modelled on $2$-categories appearing in relevant examples, finitary and fiat $2$-categories as well as their finitary $2$-representations were defined in \cite{MM1, MM2}. These should be viewed as $2$-categorical analogues of finite-dimensional algebras and their finite-dimensional representations. The notion simple transitive $2$-representation, which provides an appropriate analogue for simple representations of an algebra, was introduced in \cite{MM5}, leading to the fundamental problem of classifying these for specific classes of fiat $2$-categories, as well as of developing methods to aid such a classification for an arbitrary fiat $2$-category.

The most powerful tool for classifying simple transitive $2$-representations of a general fiat $2$-category $\cC$ is given by the so-called $\H$-cell reduction results in \cite{MMMZ} and \cite{MMMTZ2}. These reduce the problem to the classification of simple transitive $2$-representations of certain subquotients $\cC_\H$ of $\cC$. These subquotients have the desirable property that they have only one object, and only a single left/right and two-sided cell (which is then an $\H$-cell) $\H_0$ of indecomposable $1$-morphisms, apart from possibly one other such cell containing only the identity $1$-morphism $\one$. Moreover, $\cC_\H$ is $\H_0$-simple. Analogous results hold for fiab bicategories, which is the approach we take in this article.

This motivates the question what shape quasi fiab bicategories with either only one $\H$-cell $\H_0$ or precisely two $\H$-cells  $\{\one\}$ and $\H_0$, take and whether these can be classified in any way, assuming they are $\H_0$-simple. The results in this paper should be seen as an initial step in this direction.

As a first result, we show that, if $\one\in \H_0$, then such an $\H_0$-simple bicategory is a fusion category. 
We then investigate two classes of quasi-fiab bicategories with precisely two left/right and two-sided cells $\{1\}$ and $\H_0$. The first such class is given by a certain subbicategory $\cH_A$ of modules over a finite-dimensional radically graded Hopf algebra (under tensor product over $\Bbbk$), whose $1$-morphisms are given by the additive closure of projective modules and the trivial module. The second example is the bicategory $\cG_A$ of projective symmetric bimodules over a finite-dimensional self-injective basic algebra $A$ with an action of a finite group $G$, generalising the $2$-categories studied in \cite{MMZ2}. We embed both bicategories into the bicategory $\cC_A$ of projective $A$-$A$-bimodules (with horizontal composition given by tensor product over $A$) in Corollary \ref{Gammares} resp.  Corollary \ref{thetares} and show that, for $A$ a finite-dimensional radically graded Hopf algebra, $\cH_A$ can be viewed as a $1$-full subbicategory of $\cG_A$ (see Theorem \ref{HAinGA}).
We show that $\cG_A$ (and hence, by $1$-fullness, $\cH_A$) has precisely two $\H$-cells $\{\one\}$ and $\H_0$ as desired in Proposition \ref{prop7}. Moreover, both bicategories are $\H_0$-simple (Proposition \ref{Hsimple}). Finally, we give a classification of simple transitive birepresentations (extending the case considered in \cite{MMZ2}) of $\cG_A$ in Theorem \ref{thm11}, of which there are only finitely many equivalence classes. By contrast, $\cH_A$ has infinitely many equivalence classes of simple transitive birepresentations in general.

{\bf Structure of the paper.} In Section \ref{prelims}, we recall the relevant notions from finitary birepresentation theory. In Section \ref{onefusion}, we prove that an $\H_0$-simple bicategory with a single $\H$-cell $\H_0$ is fusion. In Section \ref{Hopfsection}, we introduce the relevant bicategories of representations of a Hopf algebra $A$, and construct a pseudofunctor $\Gamma$ embedding these into the appropriate bicategories of $A$-$A$-bimodules. In Section \ref{s3}, we introduce the relevant bicategories $\cG_A$ of symmetric $A$-$A$-bimodules (generalising the setup from \cite{MMZ2}) and likewise embed these into the appropriate bicategories of $A$-$A$-bimodules via a pseudofunctor $\Theta$. Moreover, we verify that the essential image of $\cH_A$ under $\Gamma$ is a $1$-full subbicategory or the essential image of $\cG_A$ under $\Theta$. Finally, we classify simple transitive birepresentations of $\cG_A$.

{\bf Acknowledgements.} The research for this article was supported by EPSRC grant EP/S017216/1.

\section{Preliminaries}\label{prelims}
Here we recall some basic definitions from finitary birepresentation theory using the conventions from \cite{MMMTZ2}.

Throughout, we let $\Bbbk$ denote an algebraically closed field. We call a $\Bbbk$-linear, additive category finitary if it is idempotent complete, has finite-dimensional morphism spaces and only finitely many indecomposable objects up to isomorphism. We denote by $\mathfrak{A}_\Bbbk^f$ the $2$-category of finitary categories, $\Bbbk$-linear functors and natural transformations.

We call a bicategory $\cC$ multifinitary if the number of objects of $\cC$ is finite,
 the categories $\cC(\mathtt{i},\mathtt{j})$ are
finitary for all $\mathtt{i},\mathtt{j}\in\cC$, and horizontal composition of $2$-morphisms is $\Bbbk$-bilinear.
If, additionally, the identity $1$-morphism on each object is indecomposable, we say $\cC$ is finitary. 

A bicategory $\cC$ is quasi (multi)fiab if it is (multi)finitary and each $1$-morphism has a left and a right adjoint. If these are moreover isomorphic, $\cC$ is (multifiab), see \cite[Definition 2.5]{MMMTZ2} for more detail.

A finitary birepresentation $\bfM$ of a multifinitary bicategory $\cC$ is a pseudofunctor $\bfM\colon \cC\to \mathfrak{A}_\Bbbk^f$. A finitary birepresentation is called simple transitive if $\coprod_{\ti\in \cC}\bfM(\ti)$ has no proper $\cC$-stable ideals.

If $\cC$ is a bicategory, and $\ti\in \cC$, we denote by $\cC_\ti$ the endomorphism bicategory of $\ti$, more precisely, the bicategory with one object $\ti$ and morphism category $\cC(\ti,\ti)$.

The set of isomorphism classes  of indecomposable $1$-morphisms of a  multifinitary bicategory $\cC$ carries a left partial preorder $\leq_{L}$ generated by setting $\mathrm{F}\leq_{L}\mathrm{G}$ if there exists $\mathrm{H}$ such that $\mathrm{G}$ is isomorphic to a direct summand of $\mathrm{H}\mathrm{F}$. Similarly, one defines right and two-sided partial preorders $\leq_R$ and $\leq_J$, respectively. Equivalence classes with respect to these are called left, right and two-sided cells, respectively. Moreover, an $\H$-cell is an intersection of a left and a right cell.

By a mild generalisation of \cite[Subsection 3.2]{ChMa}, every finitary birepresentation $\bfM$ of a (multi)finitary bicategory has an apex, which is the unique maximal two-sided cell not annihilated by $\bfM$.
To each left cell $\L$, we can associate the cell birepresentation $\bfC_\L$, which is the quotient of the left $2$-ideal generated by the $1$-morphisms in $\L$ by its unique maximal $\cC$-stable ideal. By construction, this is simple transitive and its apex is the two-sided cell containing $\L$. For more details on cells and cell birepresentations, see \cite[Section 2.5]{MMMTZ2}.

If $\J$ is a two-sided cell in $\cC$, we say $\cC$ is $\J$-simple provided it has no proper $2$-ideals which do not contain the identities on $1$-morphisms in $\J$.

\section{A single $\H$-cell $\H_0$ and $\H_0$-simplicity implies fusion}\label{onefusion}

Throughout this section, let $\cC$ be a quasi fiab $2$-category with only one object $\bullet$ (hence denoting the identity $1$-morphism on $\bullet$ simply by $\one$), and with only one $\H$-cell $\H_0$.

\begin{proposition}
If $\cC$ is $\H_0$-simple, then $\cC$ is a fusion category.
\end{proposition}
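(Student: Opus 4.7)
The plan is to extract fusion from $\H_0$-simplicity by showing that $\cC$ has no nontrivial $2$-ideals, so that in particular its Jacobson radical vanishes, and then to assemble the resulting semisimplicity with the rigidity coming from being quasi fiab.

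Since $\cC$ is quasi fiab and hence finitary, the unit $\one$ is indecomposable and therefore lies in the unique $\H$-cell $\H_0$. By $\H_0$-simplicity, any nonzero $2$-ideal $I$ of $\cC$ must contain $\id_\one$: either directly, or after transporting an identity $\id_\rF$ for some $\rF\in\H_0$ through the $\H$-cell relations, using that $\rF\sim_L\one$ makes $\one$ a summand of some $\rH\rF$ so that the projection lands $\id_\one$ in $I$. Now horizontally composing $\id_\one$ with $\id_\rF$ returns $\id_\rF$ for every $1$-morphism $\rF$ (as $\one\rF\cong\rF$), and every $2$-morphism $\alpha\colon\rF\to\rG$ factors as $\id_\rG\circ\alpha\circ\id_\rF$, so $I=\cC$. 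Hence the only $2$-ideals of $\cC$ are $0$ and $\cC$.

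Next, I would consider the Jacobson radical $\rad(\cC)$, defined in each hom-category by the categorical radical: the $2$-morphisms $\alpha\colon\rF\to\rG$ all of whose components between indecomposable summands of $\rF$ and $\rG$ are non-isomorphisms. The key verification, which I expect to be the main obstacle, is that $\rad(\cC)$ is closed under horizontal composition and hence is genuinely a $2$-ideal; this should follow from $\Bbbk$-bilinearity of horizontal composition combined with Krull--Schmidt decomposability, but it requires some care to track how $\rH\rF$ and $\rH\rG$ decompose into indecomposables after horizontal composition with a radical $2$-morphism. Granted this, $\rad(\cC)$ is a proper $2$-ideal (it contains no nonzero $\id_\rF$), so the previous step forces $\rad(\cC)=0$.

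Every endomorphism algebra $\End_\cC(\rF)$ is therefore semisimple; for indecomposable $\rF$ it is additionally local by Krull--Schmidt, and hence isomorphic to $\Bbbk$ since $\Bbbk$ is algebraically closed. This exhibits $\cC(\bullet,\bullet)$ as a $\Bbbk$-linear finite semisimple category with simple monoidal unit $\one$, and together with the rigidity provided by being quasi fiab (every $1$-morphism has a left and a right adjoint) these are precisely the defining axioms of a fusion category.
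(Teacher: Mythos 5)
Your reduction of the problem to showing $\rad(\cC(\bullet,\bullet))=0$ is sound, and so is the observation that $\H_0$-simplicity together with $\one\in\H_0$ (forced by uniqueness of the $\H$-cell) leaves no room for a nonzero proper $2$-ideal. The gap is exactly at the step you flagged: the claim that the radical is closed under horizontal composition, hence a $2$-ideal, is false in general, and it does not follow from $\Bbbk$-bilinearity plus Krull--Schmidt. A $\Bbbk$-linear functor between finitary categories need not preserve the radical (for $R=\Bbbk[x]/(x^2)$ the functor $\Bbbk\otimes_R-$ sends the radical morphism $R\twoheadrightarrow\Bbbk$ to an isomorphism), and even biadjointness does not save you: in the fiat $2$-category of ungraded Soergel bimodules for $\mathfrak{sl}_2$ there is $n\in\rad\End(\rB_s)$ (the composite of the two dot maps through the regular bimodule) such that $\id_{\rB_s}\circ_0 n$ has an invertible component with respect to a decomposition $\rB_s\rB_s\cong\rB_s\oplus\rB_s$ --- this is precisely the ``dot contracts a trivalent vertex'' relation --- so $\id_{\rB_s}\circ_0 n\notin\rad\End(\rB_s\rB_s)$. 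Worse, given your first paragraph, the statement ``$\rad(\cC)$ is a $2$-ideal'' is in your setting equivalent to $\rad(\cC)=0$, i.e.\ to the semisimplicity you are trying to prove, so this step cannot be repaired by a formal bookkeeping argument; it is where the actual content of the proposition lives.

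The paper supplies that content by citing a nontrivial external result: \cite[Theorem 2]{KMMZ}, applied to the cell birepresentation $\bfC_{\H_0}$ with $\rF=\one$, shows that $\bfC_{\H_0}(\bullet)$ is semisimple, so $\bfC_{\H_0}$ annihilates the radical of $\cC(\bullet,\bullet)$; then $\H_0$-simplicity forces $\bfC_{\H_0}$ to be faithful (this is where your ``the only $2$-ideals are $0$ and $\cC$'' observation does enter, applied to the kernel of $\bfC_{\H_0}$), whence the radical vanishes. If you want to avoid the citation you would have to reprove the relevant part of that theorem, whose argument uses the adjunctions in an essential way; the elementary route you propose does not go through.
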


\begin{proof}
Recall (cf. \cite[Definition 4.1.1]{EGNO}) that a bicategory $\cC$ is a fusion category if 

\begin{enumerate}
\item \label{object} it has only one object $\bullet$;
\item \label{morphcat}  the category $\cC(\bullet,\bullet)$ is a $\Bbbk$-linear, additive, with finitely many isomorphism classes of indecomposable objects;
\item \label{ss} $\cC(\bullet,\bullet)$ is semisimple and all objects have finite length;
\item \label{morspace} morphism spaces in $\cC(\bullet,\bullet)$ are finite-dimensional;
\item \label{horbilin} horizontal composition is bilinear on $2$-morphisms;
\item \label{indecomp} $\one=\one_\bullet$ is indecomposable;
\item \label{adj} it admits adjunctions (i.e. is rigid).
\end{enumerate}

By assumption, our $2$-category $\cC$ is quasi fiab on one object, which takes care of \eqref{object},\eqref{morphcat},\eqref{morspace}, \eqref{horbilin}, \eqref{indecomp} and \eqref{adj}, and provided that $\cC(\bullet,\bullet)$ is semisimple, all objects in $\cC(\bullet,\bullet)$ have finite length, 
so we only need to check that $\cC(\bullet,\bullet)$ is semisimple.

By \cite[Theorem 2]{KMMZ} applied to the case where $\bfM$ is the cell birepresentation $\bfC_{\H_0}$ associated to our (unique) cell ${\H_0}$, and $\rF=\one$, it follows that $\bfC_{\H_0}(\bullet)$ is semisimple.
Thus the kernel of the $2$-functor $\bfC_{\H_0}$ contains all $2$-morphisms which belong to the radical of $\bfC_{\H_0}(\bullet,\bullet)$. By ${\H_0}$-simplicity, the cell birepresentation $\bfC_{\H_0}$ is faithful, hence the radical of $\cC(\bullet,\bullet)$ is zero.
\end{proof}

Motivated by this result, the remainder of this article investigates certain classes of quasi fiab bicategories, which have precisely two $\H$-cells, namely one consisting of the identity $1$-morphism and precisely one other $\H$-cell (which is necessarily strictly larger in the two-sided order).

\section{Hopf algebras and projective bimodules}\label{Hopfsection}

Let $A$ be a finite dimensional unital associative algebra over $\Bbbk$. Additional assumptions on $A$ will be specified as we need them. Write $\oplus$ and $\otimes$ for the biproduct and tensor product on the category of $A$-$A$-bimodules. 

\subsection{Bimodule conventions} \label{bimodconv}

Let $K, M, N$ be $A$-$A$-bimodules. We fix the following standard canonical isomorphisms: 
$$ 
(K \otimes_A M) \otimes_A N \xrightarrow{\cong} K \otimes_A (M \otimes_A N),\ (k\otimes m) \otimes n \mapsto k \otimes (m \otimes n), $$
$$A \otimes_A M \xrightarrow{\cong} M,\  a \otimes m \mapsto am, $$
$$ K \otimes_A(M \oplus N) \xrightarrow{\cong} (K \otimes_A M) \oplus (K \otimes_A N),\ k \otimes (m,n) \mapsto (k \otimes m, k \otimes n).$$

Suppose $\{ e_1, \ldots, e_k\}$ is a complete set of primitive orthogonal idempotents for $A$. For all $i,j \in \{1, \ldots, k \}$ we have isomorphisms
$$ e_i A \otimes_A A e_j \xrightarrow{\cong} e_i A e_j, e_i a \otimes b e_j \mapsto e_i ab e_j.$$

Throughout, we treat all these isomorphisms as equalities.

\subsection{The bicategories $\cB_A$ and $\cC_A$}

Let $\cB_A$ be the bicategory of $A$-$A$-bimodules. More precisely, $\cB_A$ has one object $\bullet$ and its morphism category $\cB_A(\bullet,\bullet)$ is the category of $A$-$A$-bimodules with horizontal composition given by the tensor product $-\otimes_A -$, and the associator and unitors given by the canonical isomorphisms in Section \ref{bimodconv}.

We define $\cC_A$ to be the $2$-full subbicategory of $\cB_A$ whose $1$-morphisms are those bimodules in the additive closure of $A\oplus A\otimes_\Bbbk A$. Note that this is always multifinitary, quasi multifiab if $A$ is a Frobenius algebra, and multifiab if $A$ is weakly symmetric. The prefix multi- is superfluous if $A$ is indecomposable.

\begin{remark}
Note that if $A=A_1\times \cdots \times A_n$ is a decomposition of $A$ into indecomposable factors, then $\cC_A$ is a bicategorical version of the additive closure (see \cite[Section 2.4]{MMMTZ2}) of the $2$-category that is denoted by $\cC_A$ in e.g. \cite{MMZ3}.
\end{remark}

\subsection{Group actions and equivariant objects}\label{groupactions}


Let $G$ be a finite subgroup of the automorphism group of $A$, which we interpret as acting on the left of $A$.
We further assume that $\mathrm{char}(\Bbbk)$ does not divide $|G|$.
We obtain a right action of $G$ on the category of $A$-$A$-bimodules via
$M\mapsto M^g$, with the action of $A$ on $M^{g}$
given by
\begin{displaymath}
a\cdot m\cdot b:= g(a)m g(b), \quad \text{ for all }a,b\in A\text{ and }m\in M.
\end{displaymath}
For the translate of a morphism $\varphi$ under the action of
$g\in G$, we will write $\varphi^{g^{}}$.

\begin{lemma}
This is a well-defined action.
\end{lemma}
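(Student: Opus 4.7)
The plan is to verify the three things required to make the assignment $M\mapsto M^g$ (together with $\varphi\mapsto \varphi^g$) into a right action of $G$ on the category of $A$-$A$-bimodules: that each $M^g$ is again an $A$-$A$-bimodule, that the assignment is functorial on morphisms, and that the compositions $(M^g)^h$ agree strictly with $M^{gh}$ while $M^e=M$.

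First, I would check that the formula $a\cdot m\cdot b := g(a)\,m\,g(b)$ gives a well-defined $A$-$A$-bimodule structure on the underlying abelian group of $M$. Both associativity (e.g.\ $(aa')\cdot m = g(aa')m = g(a)g(a')m = a\cdot(a'\cdot m)$) and unitality $1\cdot m = g(1)m = m$ reduce immediately to the facts that $g$ is a unital algebra homomorphism and that $M$ is already an $A$-$A$-bimodule; commutativity of left and right actions is inherited verbatim from $M$.

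Second, for a bimodule homomorphism $\varphi\colon M\to N$, I define $\varphi^g$ to be $\varphi$ on underlying sets. The bilinearity of $\varphi$ gives
\[
\varphi^g(a\cdot m\cdot b)=\varphi(g(a)m\,g(b))=g(a)\varphi(m)g(b)=a\cdot \varphi^g(m)\cdot b,
\]
so $\varphi^g\colon M^g\to N^g$ is a bimodule map; composition and identities are preserved because they are unchanged as maps of sets, so $(-)^g$ is a $\Bbbk$-linear endofunctor.

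Finally, for $g,h\in G$ and $m\in M$, the twisted action on $(M^g)^h$ is $a\cdot m\cdot b = h(a)\cdot_{M^g}m\cdot_{M^g}h(b) = g(h(a))\,m\,g(h(b)) = (gh)(a)\,m\,(gh)(b)$, which is precisely the action on $M^{gh}$; likewise $M^e=M$ on the nose, and the identifications are compatible with morphisms since $\varphi^g=\varphi$ as set maps. Hence $(-)^g$ gives a strict right $G$-action. The only mild obstacle is bookkeeping of conventions, namely confirming that under the standard left-action convention $(gh)(a)=g(h(a))$ one indeed obtains a \emph{right} action on bimodules in the sense $(M^g)^h=M^{gh}$; once this is pinned down, the lemma is a direct series of axiom checks.
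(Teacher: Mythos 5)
Your proposal is correct and follows essentially the same route as the paper: the key computation $(M^g)^h = M^{gh}$ via $h(a)\cdot m\cdot h(b) = g(h(a))\,m\,g(h(b)) = (gh)(a)\,m\,(gh)(b)$ is identical, and the paper likewise reduces the bimodule check to $g$ being an algebra automorphism (phrased there via $A\otimes_\Bbbk A^{\mathrm{op}}$). Your explicit verification of functoriality on morphisms is a harmless addition the paper leaves implicit.
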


\proof
First note that $M^{g}$ is indeed an $A$-$A$-bimodule. This follows from the fact that $M$ is an $A$-$A$-bimodule, hence an $A\otimes_{\Bbbk}A^{op}$-module, and $g$ defines an algebra automorphism of $A\otimes_{\Bbbk}A^{op}$ for any $g \in G$. 

Clearly, $M \cong M^{1_{G}}$. Thus, we only need to check that $(M^{g})^h \cong M^{gh}$ as $A$-$A$-bimodules. We write $\cdot$ for the action on $M^{g}$ and $\ast$ for the action on $(M^{g})^h $.

Then the action of $A$ on the left hand-side is given by
$$ a \ast m \ast b=h(a)\cdot m\cdot h(b)=
g (h(a)) m g (h(b))=
 (gh)(a) m (gh)(b)
 $$
for $a,b \in A, \ m \in M$.
Thus, the action of $A$ is the same as required.
\endproof

\begin{lemma}\label{canoniso}
Let $M,N$ be two $A$-$A$-bimodules and $g\in G$. Then $(M\otimes_A N)^g\cong M^g\otimes_AN^g$ as $A$-$A$-bimodules via the canonical isomorphism $m\otimes n\mapsto m\otimes n$.
\end{lemma}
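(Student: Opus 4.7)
The plan is to construct the isomorphism in the most direct way possible: verify that the identity-on-elements rule $m\otimes n\mapsto m\otimes n$ descends to a well-defined map $M^g\otimes_A N^g\to (M\otimes_A N)^g$, check bimodule equivariance with respect to the twisted actions on both sides, and then exhibit an inverse by the symmetric construction.

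First I would address well-definedness. The tensor $M^g\otimes_A N^g$ is the quotient of $M\otimes_\Bbbk N$ by the relation $(m\cdot a)\otimes n = m\otimes (a\cdot n)$ where $\cdot$ denotes the twisted action; unwinding the definitions in Section~\ref{groupactions}, this reads $mg(a)\otimes n = m\otimes g(a)n$, which is precisely a balancing relation in $M\otimes_A N$ (since $g(a)\in A$). Hence the assignment $m\otimes n\mapsto m\otimes n$ factors through a $\Bbbk$-linear map $\varphi\colon M^g\otimes_A N^g\to (M\otimes_A N)^g$.

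Next I would check bimodule equivariance. For $a\in A$, the left action on $M^g\otimes_A N^g$ yields $a\cdot(m\otimes n) = (a\cdot m)\otimes n = g(a)m\otimes n$, while the left action on $(M\otimes_A N)^g$ yields $a\cdot (m\otimes n) = g(a)(m\otimes n) = g(a)m\otimes n$ using the canonical left unitor from Section~\ref{bimodconv}; the two agree. The computation for the right action is identical with $g(a)$ acting on $n$ on the right. This shows $\varphi$ is a morphism of $A$-$A$-bimodules.

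Finally I would produce an inverse by running the same construction backwards: the map $(M\otimes_A N)^g\to M^g\otimes_A N^g$ sending $m\otimes n\mapsto m\otimes n$ is well-defined by the same balancing check (the relation in $M\otimes_A N$ is $ma\otimes n=m\otimes an$, which in $M^g\otimes_A N^g$ reads $(m\cdot g^{-1}(a))\otimes n = m\otimes (g^{-1}(a)\cdot n)$, i.e.\ the balancing in $M^g\otimes_A N^g$ applied to $g^{-1}(a)\in A$). Composing $\varphi$ with this map in either order gives the identity on generators, so $\varphi$ is the desired isomorphism. There is no real obstacle here; the only thing one has to be careful about is keeping track of which action symbol ($\cdot$ versus the underlying action of $A$ on $M$) is being used at each step, and the proof essentially reduces to the observation that $g$ being an algebra automorphism of $A$ means every balancing relation translates into another balancing relation.
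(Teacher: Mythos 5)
Your proof is correct and follows essentially the same route as the paper's (much terser) argument: both rest on the observation that, because $g$ is an automorphism of $A$, the balancing relations defining $M^g\otimes_A N^g$ and $M\otimes_A N$ coincide as subspaces of $M\otimes_\Bbbk N$, so the identity on elements descends to a $\Bbbk$-linear isomorphism, which is then checked to intertwine the twisted bimodule structures. You merely spell out the well-definedness, equivariance, and inverse checks that the paper leaves implicit.
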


\proof
The map
$(M\otimes_A N)^g\to M^g\otimes_AN^g$, $m\otimes n\mapsto m\otimes n$ induces an isomorphism of $\Bbbk$-modules. This isomorphism extends to an isomorphism of $A$-$A$-bimodules since $g\in$ Aut$(A)$.
\endproof

For future use, we record the following lemma.

\begin{lemma}\label{actiononproj}
Assume that $A$ has a complete set of primitive orthogonal idempotents $\mathtt{E} = \{e_g | g \in G\}$ indexed by the group $G$, such that the action of $G$ leaves $\mathtt{E}$ invariant and is given by $h(e_g) = e_{gh^{\mone}}$.
Then $$(Ae_g\otimes_{\Bbbk}e_{g'}A)^h\cong Ae_{gh}\otimes_{\Bbbk}e_{g'h}A.$$
\end{lemma}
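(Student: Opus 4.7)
The plan is to write down an explicit $A$-$A$-bimodule isomorphism
\[
\phi \colon (Ae_g \otimes_{\Bbbk} e_{g'}A)^h \longrightarrow Ae_{gh} \otimes_{\Bbbk} e_{g'h}A
\]
by applying the automorphism $h^{\mone}$ in each tensor factor: $\phi(u \otimes v) := h^{\mone}(u) \otimes h^{\mone}(v)$. The candidate is dictated by the target indexing and by the twisted action on the source; the heart of the argument is that the twist by $h$ on the source is exactly compensated by applying $h^{\mone}$ through the tensor product.

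First I would observe that the hypothesis $h(e_g) = e_{gh^{\mone}}$ rearranges to $h^{\mone}(e_g) = e_{gh}$, so that $h^{\mone}$ restricts to $\Bbbk$-linear bijections $Ae_g \to Ae_{gh}$ (sending $x e_g \mapsto h^{\mone}(x)\, e_{gh}$) and $e_{g'}A \to e_{g'h}A$. Hence $\phi$ is a well-defined $\Bbbk$-linear map into the target, and bijectivity is immediate since applying $h$ in each factor provides a two-sided inverse.

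The remaining step is to verify that $\phi$ intertwines the twisted actions on the source, recalled from Section \ref{groupactions}, with the standard bimodule structure on the target. For $a, b \in A$ and $u \otimes v$ in the source,
\[
\phi\bigl(a\cdot(u\otimes v)\cdot b\bigr) \;=\; \phi\bigl(h(a)u \otimes v\,h(b)\bigr) \;=\; h^{\mone}(h(a))\,h^{\mone}(u)\otimes h^{\mone}(v)\,h^{\mone}(h(b)) \;=\; a\,h^{\mone}(u)\otimes h^{\mone}(v)\,b,
\]
which coincides with $a \cdot \phi(u\otimes v)\cdot b$ on the target. This uses only that $h$ is an algebra automorphism, i.e.\ exactly the fact that was invoked in Lemma \ref{canoniso}.

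There is no real obstacle here beyond careful bookkeeping: one must track that the convention $h(e_g) = e_{gh^{\mone}}$ forces the target indices to be $gh$ and $g'h$ (and not $gh^{\mone}$, $g'h^{\mone}$), so that $\phi$ lands in the module named in the statement. Once this is set up, the verification is a direct computation of the type above.
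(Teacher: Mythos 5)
Your proof is correct. It differs in mechanics from the paper's: the paper does not write down a map at all, but instead computes $e_s\cdot(e_g\otimes e_{g'})\cdot e_t$ under the twisted action and observes that this is nonzero precisely for $(s,t)=(gh,g'h)$, thereby locating the idempotent pair supporting the canonical generator $e_g\otimes e_{g'}$ and identifying the twisted bimodule as $Ae_{gh}\otimes_\Bbbk e_{g'h}A$ (implicitly using that an indecomposable projective $A$-$A$-bimodule is determined by that pair). You instead exhibit the isomorphism explicitly as $u\otimes v\mapsto h^{\mone}(u)\otimes h^{\mone}(v)$, checking well-definedness on the tensor product, that it lands in the correct summand because $h^{\mone}(e_g)=e_{gh}$, and the bimodule intertwining property $\phi(a\cdot(u\otimes v)\cdot b)=a\,\phi(u\otimes v)\,b$; your verification is complete and uses only that $h$ is an algebra automorphism. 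The trade-off is minor: the paper's argument is shorter and matches the style of the later relabelling arguments (e.g.\ in Lemma \ref{Gamma-proj-equiv}, where the isomorphism is treated as a relabelling of components), whereas your version has the advantage of producing a concrete, canonical isomorphism and of not appealing, even implicitly, to the classification of indecomposable projective bimodules. Both establish the stated claim.
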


\proof
We compute
$e_s\cdot e_g\otimes e_{g'}\cdot e_t = h(e_s)e_g\otimes e_{g'}h(e_t) = e_{sh^{\mone}}e_g\otimes e_{g'}e_{th^{\mone}} \neq 0 $ iff $sh^{\mone}=g$ and $th^{\mone}=g'$, so
$(Ae_g\otimes_{\Bbbk}e_{g'}A)^h\cong Ae_{gh}\otimes_{\Bbbk}e_{g'h}A$ in the category of $A$-$A$-bimodules. 
\endproof

Recall (c.f. \cite{Su}) that a $G$-equivariant object of the category $\cB_A(\bullet, \bullet)$ is a pair $(M, \{ \alpha^M_g\ |\ g \in G \}$) where $\alpha_g^M: M \xrightarrow{\cong} M^g$ such that the diagram
\begin{equation}\label{G-equivobj}
\xymatrix{
M \ar^{\alpha_g^M}[r]\ar^{\alpha_{hg}^M}[d]& M^g\ar^{(\alpha_h^M)^{g}}[d]\\
M^{hg}\ar^{=}[r] & (M^h)^g
}
\end{equation}
commutes. By slight abuse of notation, we will call the object $M$ $G$-equivariant, if there exists a $G$-equivariant structure on it.

A morphism $\psi$ between two $G$-equivariant objects $(M, \{\alpha_g^M\ |\ g \in G\})$ and $(N, \{\alpha_g^N \ |\ g \in G\}$ is a morphism $\psi: M \to N$ such that the diagram

\begin{equation}\label{G-equivmor}
\xymatrix{
M \ar^{\alpha_g^M}[r]\ar^{\psi}[d]& M^g\ar^{(\psi)^g}[d]\\
N \ar^{\alpha_g^N}[r] & (N)^g.
}\end{equation}
commutes.

\subsection{Hopf algebra conventions}
Let $A$ be finite dimensional Hopf algebra over $\Bbbk$ with multiplication $\mathrm{m}$, comultiplication $\Delta$, counit $\epsilon$, unit $\iota$, and antipode $S$.

For $A$-modules $M, N$, we can, as usual, view $M\otimes_\Bbbk N$ as an A-module using the coproduct $\Delta$ of A via $a \cdot (m\otimes n)=\Delta(a)(m\otimes n)$. 

%
%

Let $B = A \otimes A^{op}$.
Consider the map $\varphi := (\id \otimes S) \circ \Delta: A \to B$. Since both $\Delta$ and $\id \otimes S$ are injective, so is $\varphi$.
Moreover, a direct computation shows that $\varphi$ is a morphism of algebras, and hence an algebra monomorphism. It thus induces a left and a right $A$-module structure on $B$ given by

\begin{equation}\begin{split}\label{AactiononB}
 a \cdot_\varphi (b \otimes c) &= \varphi(a)(b \otimes c)= \sum a^{(1)}b \otimes c S(a^{(2)}),\\ (b \otimes c) \cdot_\varphi a &= (b \otimes c) \varphi(a)= \sum ba^{(1)} \otimes S(a^{(2)})c,
\end{split}\end{equation}
respectively. When regarding $B$ as an $A$-$A$-bimodule with respect to these actions, we will write ${}_{A \cdot}B_{\cdot A}$.
For future use, we record the following lemma.

\begin{lemma}\label{AbasisofB}
Let $\mathtt{A}$ be a $\Bbbk$-basis of $A$. 
\begin{enumerate}[(a)]
\item\label{AbasisofB0} A $\Bbbk$-basis of $A\otimes_\Bbbk A$ is given by $\{\sum a^{(1)} \otimes a^{(2)}b\,\vert\, a, b\in \mathtt{A}\}$. In particular, $A\otimes_\Bbbk A$ as a left $A$-module with the action induced by the coproduct of $A$ is free on basis $\{\sum 1 \otimes b\,\vert\, b\in \mathtt{A}\}$.
\item\label{AbasisofB1} A $\Bbbk$-basis of $A\otimes_\Bbbk A$ is given by $\{\sum a^{(1)} \otimes S(a^{(2)})b\,\vert\, a, b\in \mathtt{A}\}$. In particular, $B_{\cdot A}$ is free as a right $A$-module with basis $\{\sum 1 \otimes b\,\vert\, b\in \mathtt{A}\}$.
\item\label{AbasisofB2} A $\Bbbk$-basis of $A\otimes_\Bbbk A$ is given by $\{\sum a^{(1)} \otimes bS(a^{(2)})\,\vert\, a, b\in \mathtt{A}\}$. In particular, ${}_{\cdot A}B$ is free as a left $A$-module with basis $\{\sum 1 \otimes b\,\vert\, b\in \mathtt{A}\}$
\end{enumerate}
\end{lemma}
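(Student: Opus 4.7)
The plan is to recognise each of the three claimed bases as the image of the standard basis $\{a \otimes b \mid a, b \in \mathtt{A}\}$ of $A \otimes_\Bbbk A$ under an appropriate $\Bbbk$-linear endomorphism of $A \otimes_\Bbbk A$, and to show in each case that this endomorphism is a bijection. The three ``In particular'' statements then follow immediately, since each displayed expression is precisely the result of applying the relevant left or right $A$-action to $1 \otimes b$: a $\Bbbk$-linear bijection sending $\{a \otimes b\}$ to $\{a \cdot (1 \otimes b)\}$ is exactly what is needed for $\{1 \otimes b \mid b \in \mathtt{A}\}$ to be a free $A$-module basis under the relevant action.

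For part (a), I would introduce $\Phi_1, \Phi_2 \colon A \otimes_\Bbbk A \to A \otimes_\Bbbk A$ by $\Phi_1(a \otimes b) = \sum a^{(1)} \otimes a^{(2)} b$ and $\Phi_2(a \otimes b) = \sum a^{(1)} \otimes S(a^{(2)}) b$. A direct Sweedler-style calculation, expanding $\Phi_2 \circ \Phi_1$ and $\Phi_1 \circ \Phi_2$, applying coassociativity to rewrite $\Delta^{(2)}(a)$, and then collapsing the middle tensor factor via the antipode identities $\sum c^{(1)} S(c^{(2)}) = \epsilon(c) \cdot 1 = \sum S(c^{(1)}) c^{(2)}$, shows that $\Phi_1 \circ \Phi_2 = \Phi_2 \circ \Phi_1 = \id$. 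Part (b) is then immediate, as $\Phi_2 = \Phi_1^{-1}$ is also a bijection. Identifying $\Phi_1(a \otimes b) = a \cdot (1 \otimes b)$ under the standard action via $\Delta$, and $\Phi_2(a \otimes b) = (1 \otimes b) \cdot_\varphi a$ via \eqref{AactiononB}, yields the corresponding ``In particular'' assertions.

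Part (c) is where I expect the main obstacle. Set $\Phi_3(a \otimes b) = \sum a^{(1)} \otimes b S(a^{(2)})$, so that $\Phi_3(a \otimes b) = a \cdot_\varphi (1 \otimes b)$ by \eqref{AactiononB}. The naive candidate inverse $\sum a^{(1)} \otimes b a^{(2)}$ does not work, because coassociativity produces a term of the shape $b \, S(a_{(2)}) \, a_{(3)}$, whose inner factors occur in the ``wrong'' order for the antipode identity to collapse them. To get around this, I would use that the antipode $S$ of a finite-dimensional Hopf algebra is bijective (Larson--Sweedler), so that $\gamma := \id \otimes S$ is a $\Bbbk$-linear automorphism of $A \otimes_\Bbbk A$. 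A short calculation, using that $S$ is an algebra anti-homomorphism to move $S$ across the product $a^{(2)} b$, then gives $\gamma \circ \Phi_1 = \Phi_3 \circ \gamma$, so that $\Phi_3 = \gamma \circ \Phi_1 \circ \gamma^{-1}$ is a composition of bijections. This proves part (c), and the ``In particular'' assertion follows exactly as before.
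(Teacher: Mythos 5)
Your proposal is correct and follows essentially the same route as the paper: the paper also realises the sets in (a) and (b) as images of the standard basis under the mutually inverse maps $f=(\id\otimes\mathrm{m})\circ(\Delta\otimes\id)$ and $g=(\id\otimes\mathrm{m})\circ(\id\otimes S\otimes\id)\circ(\Delta\otimes\id)$ (verified there by a commutative diagram rather than your direct Sweedler computation), and handles (c) by pushing the basis $\{a\otimes S^{\mone}(b)\}$ through $f$ and then $\id\otimes S$, which is exactly your conjugation identity $\Phi_3=(\id\otimes S)\circ\Phi_1\circ(\id\otimes S)^{\mone}$ in different packaging. The derivation of the ``In particular'' statements from the module actions is likewise the same.
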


\proof
Following the proof of \cite[Lemma 3.1.4]{Mo}, we define $f = (\id\otimes\mathrm{m})\circ(\Delta\otimes \id)\colon A\otimes A\to A\otimes A$. This has inverse $g = (\id\otimes \mathrm{m})(\id\otimes S\otimes \id)(\Delta\otimes \id)$, as commutativity of the diagram
$$
\xymatrix{A\otimes A \ar^{\Delta\otimes \id}[rr]\ar^{\Delta\otimes \id}[d]&&A\otimes A\otimes A \ar^{\id\otimes \mathrm{m}}[rr]\ar^{\Delta\otimes \id\otimes\id}[d]&& A\otimes A \ar^{\Delta\otimes \id}[d]\\
A\otimes A\otimes A\ar^{\id \otimes\Delta\otimes \id}[rr]
\ar_{\id\otimes \iota\epsilon\otimes \id}[rrdd]
 &&A\otimes A\otimes A \otimes A\ar^{\id \otimes\id\otimes \mathrm{m}}[rr] \ar^{\id\otimes S\otimes \id\otimes \id}[d] && A\otimes A\otimes A\ar^{\id\otimes S\otimes \id}[d]  \\
&&A\otimes A\otimes A \otimes A\ar^{\id \otimes\id\otimes \mathrm{m}}[rr] \ar^{\id \otimes \mathrm{m}\otimes\id}[d] && A\otimes A\otimes A \ar^{\id \otimes \mathrm{m}}[d] \\
&&A\otimes A\otimes A \ar^{\id \otimes \mathrm{m}}[rr] && A\otimes A,
}
$$
which follows from coassociativity and associativity for the top left and bottom right square, the interchange law for the the two top right squares, and the Hopf algebra axiom for the bottom left square,
shows. 

\eqref{AbasisofB0} Since $\{\sum a^{(1)} \otimes a^{(2)}b\,\vert\, a, b\in \mathtt{A}\}$ is the image of the $\Bbbk$-basis $\{a\otimes b \,\vert\, a,b \in \mathtt{A}\}$ under the isomorphism $f$ it is also a $\Bbbk$-basis.

\eqref{AbasisofB1} 
Similarly, $\{\sum a^{(1)} \otimes S(a^{(2)})b\,\vert\, a, b\in \mathtt{A}\}$ is the image of the $\Bbbk$-basis $\{a\otimes b \,\vert\, a,b \in \mathtt{A}\}$ under the isomorphism $g$, it is also a $\Bbbk$-basis, and \eqref{AbasisofB1} follows.

\eqref{AbasisofB2} Since $A$ is finite-dimensional , the antipode is invertible and  $\{a\otimes S^{\mone}(b) \,\vert\, a,b \in \mathtt{A}\}$ is also a $\Bbbk$-basis  of $A\otimes A$. The image of this basis under $f$ is $\{\sum a^{(1)} \otimes a^{(2)}S^{\mone}(b)\,\vert\,a,b\in \mathtt{A}\}$, which under the isomorphism $\id\otimes S$ is sent to $\{\sum a^{(1)} \otimes bS(a^{(2)})\,\vert\,a,b\in \mathtt{A}\}$, proving that the latter is also a $\Bbbk$-basis.

All statements about $A$-bases with respect to the given actions follow immediately from the definitions of the latter (see \eqref{AactiononB} for \eqref{AbasisofB1},\eqref{AbasisofB2}).
\endproof

\subsection{The bicategories  $\cR ep_A$ and $\cH_A$.}

Consider $\cR ep_A$, the one object bicategory with object $\bullet$ and $\cR ep_A(\bullet, \bullet)\cong A\lmod$, with a fixed biproduct $-\oplus - $ and with horizontal composition induced by $-\otimes_\Bbbk -$ and the Hopf algebra structure on $A$. 
The associator is given by the canonical isomorphism $(K\otimes_\Bbbk M)\otimes_\Bbbk N \xrightarrow{\id\otimes \id\otimes \id}K\otimes_\Bbbk (M\otimes_\Bbbk N)$, which we again treat as an identity, and the unit object is given by $L_1 = \Bbbk v$, on which $A$ acts by the counit, i.e. $av=\epsilon(a)v$. The unitors are then given by $L_1 \otimes_\Bbbk M \to M: v\otimes m\mapsto m$ and $M\otimes_\Bbbk L_1  \to M: m\otimes v\mapsto m$.

We define $\cH_A$ to be the $2$-full subbicategory whose $1$-morphisms are those $A$-modules in the additive closure of $A\oplus L_1$. This is finitary and, given a finite-dimensional Hopf algebra is always Frobenius, also quasi fiab.


\subsection{The pseudofunctor $\Gamma$}\label{Gamma}

We will now define a $2$-faithful pseudofunctor $\Gamma\colon \cR ep_A\to \cB_A$.  

Recall $B=A\otimes_\Bbbk A^{op}$ and the left and right $A$-module structure on $B$ induced by $\varphi$, see \eqref{AactiononB}. Identifying $ \cB_A(\bullet,\bullet)$ with the $B\lmod$,  consider the restriction functor $\Phi: \cB_A(\bullet,\bullet) \to \cR ep_A(\bullet,\bullet)$, which can equivalently be viewed as $\Hom_B(B_{\cdot A}, -)$ or as ${}_{A\cdot}B\otimes_B-$. It therefore has a left adjoint $\Gamma$ given by $B_{\cdot A}\otimes_A -$.

\begin{lemma}\label{Philax}
$\Phi$ is a lax pseudofunctor.
\end{lemma}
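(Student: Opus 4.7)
The plan is to supply $\Phi$ with the two pieces of structure making up a lax pseudofunctor, namely a laxator $\mu_{M,N}\colon \Phi(M)\otimes_\Bbbk\Phi(N)\to \Phi(M\otimes_A N)$ natural in $M$ and $N$, and a unit morphism $\eta\colon L_1\to \Phi(A)$, and then to verify the pentagon and triangle coherence axioms. Since $\Phi$ is restriction of scalars along $\varphi=(\id\otimes S)\circ\Delta$, both structure morphisms should be the identity at the level of underlying $\Bbbk$-vector spaces (composed with the canonical quotient $\otimes_\Bbbk\tto\otimes_A$ in the laxator case), so all the content lies in checking $A$-linearity via the Hopf algebra axioms.

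For the laxator I would take $\mu_{M,N}(m\otimes n)=m\otimes_A n$. The coproduct action on $\Phi(M)\otimes_\Bbbk\Phi(N)$ reads $a\cdot(m\otimes n)=\sum a^{(1)}m\,S(a^{(2)})\otimes a^{(3)}n\,S(a^{(4)})$, while the action on $\Phi(M\otimes_A N)$ coming from $\varphi$ is $a\cdot(m\otimes_A n)=\sum a^{(1)}m\otimes_A n\,S(a^{(2)})$, so $A$-linearity reduces to the identity of these two expressions inside $M\otimes_A N$. I would prove it by sliding $S(a^{(2)})$ across $\otimes_A$ on the left-hand side, applying the antipode axiom $\sum S(x^{(1)})x^{(2)}=\epsilon(x)\,1_A$ with $x=a^{(2)}$ (after relabelling via coassociativity), and absorbing the resulting scalar through the counit axiom $(\id\otimes\epsilon)\Delta=\id$. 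Naturality in $M,N$ is automatic because the quotient is defined on the underlying vector spaces and bimodule maps are in particular $A$-linear for the restricted action.

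For the unit constraint I would set $\eta(v)=1_A$. Its $A$-linearity is a single use of the antipode axiom: $a\cdot 1_A=\sum a^{(1)}S(a^{(2)})=\epsilon(a)\,1_A=\eta(a\cdot v)$. With $\mu$ and $\eta$ in place, the pentagon for $\mu$ and the two triangle axioms for $\eta$ reduce, after identifying the associator and unitors of $\cB_A$ with the canonical isomorphisms of Section~\ref{bimodconv} and the unit structure on $\cR ep_A$ with its analogous canonical isomorphisms, to tautologies on representative tensors $k\otimes m\otimes n$ and $v\otimes m$, since every map in sight is the underlying quotient to $\otimes_A$ precomposed with a canonical vector space isomorphism.

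The main obstacle is precisely the $A$-linearity computation for $\mu$: this is the one place where the interplay between the coproduct-induced action on $\Phi(M)\otimes_\Bbbk\Phi(N)$ and the $\varphi$-twisted action on $\Phi(M\otimes_A N)$ has to be genuinely reconciled, and it is the combination of the bimodule relation across $\otimes_A$ with the antipode axiom that makes this possible. All subsequent verifications — naturality, pentagon, triangles — are then formal consequences of the fact that every structure morphism descends from the identity at the vector-space level.
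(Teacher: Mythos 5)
Your proposal is correct and follows essentially the same route as the paper: the laxator is the canonical quotient $M\otimes_\Bbbk N\tto M\otimes_A N$, the unit constraint sends $v\mapsto 1_A$, and the only substantive check is $A$-linearity of the quotient, which you resolve exactly as the paper does by sliding $S(a^{(2)})$ across $\otimes_A$ and invoking coassociativity together with the antipode and counit axioms (the paper packages this as a commutative diagram, but the content is identical). The coherence verifications are likewise treated as formal in both arguments.
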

\begin{proof}
Let $M,N\in B\lmod$. We define a natural transformation $$\kappa\colon \Phi(-)\otimes_\Bbbk \Phi(-) \to \Phi(-\otimes_A -)$$ by the natural projection  
$$\kappa_{M,N}\colon M\otimes_\Bbbk N \twoheadrightarrow M\otimes_AN.$$
We note that this is indeed a morphism in $\cR ep_A(\bullet,\bullet)$, since
\begin{equation*}\begin{split}
\kappa_{M,N} (a\cdot_\varphi (m\otimes_\Bbbk n)) 
&= \kappa_{M,N} ((a^{(1)}\cdot_\varphi m)\otimes_\Bbbk (a^{(2)}\cdot_\varphi n)) \\
&= \kappa_{M,N} (a^{(11)}mS(a^{(12)})\otimes_\Bbbk a^{(21)}nS(a^{(22)})) \\
&=a^{(11)}mS(a^{(12)})\otimes_A a^{(21)}nS(a^{(22)})\\
&=a^{(1)}m\otimes_A nS(a^{(2)}) = a\cdot_\varphi (m\otimes_A n).
\end{split}\end{equation*}
Here the fourth equality follows from commutativity of
$$
\xymatrix{
A \ar^\Delta[rr] \ar^\Delta[d]&& A\otimes_\Bbbk A  \ar^{\Delta\otimes \id}[d]
\\
 A\otimes_\Bbbk A\ar^{\id\otimes \Delta}[rr] \ar^{\id\otimes \Delta}[d] && A\otimes_\Bbbk A\otimes_\Bbbk A\ar^{\id\otimes \id\otimes \Delta}[d] 
 \\
 A\otimes_\Bbbk A\otimes_\Bbbk A \ar^{\id\otimes \Delta\otimes \id}[rr]\ar^{\id \otimes \epsilon \otimes \id}[d]&& A\otimes_\Bbbk A\otimes_\Bbbk A\otimes_\Bbbk A \ar^{\id\otimes S\otimes  \id\otimes S}[d]\\
 A\otimes_\Bbbk\Bbbk\otimes_\Bbbk A \ar^{\id \otimes \iota \otimes S}[dr]&&A\otimes_\Bbbk A\otimes_\Bbbk A\otimes_\Bbbk A\ar^{\id\otimes \mathrm{m}\otimes \id}[dl]\\
 & A\otimes_\Bbbk A\otimes_\Bbbk A&
}$$
which is due to coassociativity for the top two squares and the antipode  axiom for the bottom pentagon.
Compatibility with the associator is encoded by the commutative diagram
$$
\xymatrix{
(K\otimes_\Bbbk M)\otimes_\Bbbk N \ar^{\id\otimes \id\otimes \id}[rr]\ar^{\kappa_{K,M}\otimes \id}[d]&&K\otimes_\Bbbk (M\otimes_\Bbbk N)\ar^{\id \otimes \kappa_{M,N}}[d] \\
(K\otimes_A M)\otimes_\Bbbk N \ar^{\kappa_{K\otimes_A M,N}}[d]&&K\otimes_\Bbbk (M\otimes_A N)\ar^{\kappa_{K, M\otimes_A N}}[d] \\
(K\otimes_A M)\otimes_A N  \ar^{\id\otimes \id\otimes \id}[rr]&&K\otimes_A(M\otimes_A N).
}
$$
Finally, consider the unit $L_1$, and define the morphism $\xi\colon L_1\to \Phi(A)$ by $v\mapsto 1_A$. This is indeed a morphism in $\cR ep(\bullet,\bullet)$, since $\xi (av) = \xi(\epsilon(a)v) = \epsilon(a)1_A$ while $a\cdot_\varphi 1_A = \sum a^{(1)}1_AS(a^{(2)}) = \epsilon(a)1_A$ by the antipode axiom. Moreover, the diagrams
$$
\xymatrix{
L_1\otimes_\Bbbk M \ar^{\xi\otimes \id}[r]\ar^{\sim}[d]& A\otimes_\Bbbk M\ar^{\kappa_{A,M}}[d]\\
M \ar^{\sim}[r] & A\otimes_AM
}
\qquad\qquad
\xymatrix{
M\otimes_\Bbbk L_1 \ar^{\id\otimes \xi}[r]\ar^{\sim}[d]& M\otimes_\Bbbk A\ar^{\kappa_{M,A}}[d]\\
M \ar^{\sim}[r] & M\otimes_AA
}
$$
commute, hence $\kappa$ and $\xi$ equip $\Phi$ with the structure of a lax pseudofunctor.
\end{proof}

Since $\Phi$ is isomorphic to $\Hom_B(B_{\cdot A}, -)$, it has a left adjoint $\Gamma$ given by $B_{\cdot A}\otimes_A -$. Note that this is an exact functor by Lemma \ref{AbasisofB}\eqref{AbasisofB1}. Denote by $\sigma\colon \Id\to \Phi\Gamma$ and $\tau \colon \Gamma\Phi\to \Id$ the unit and counit of the adjunction $(\Gamma, \Phi)$.

\begin{corollary}\label{Gammaoplax}
The left adjoint $\Gamma \colon \cR ep_A\to \cB_A$ of $\Phi$ is an oplax pseudofunctor, with the corresponding natural transformation $\gamma\colon \Gamma(- \otimes_{\Bbbk} -) \to \Gamma(-) \otimes_A \Gamma(-)$ given by the composite
$$ 
\xymatrix{\gamma_{X,Y}\,\colon \,
\Gamma(X \otimes_{\Bbbk} Y) \ar^{\Gamma(\sigma_X\otimes \sigma_Y)}[rr] && \Gamma( \Phi\Gamma(X) \otimes_{\Bbbk}  \Phi\Gamma(Y))\ar^{\Gamma(\kappa_{\Gamma(X),\Gamma(Y)})}[rr]&& \Gamma\Phi(\Gamma(X)\otimes_A\Gamma(Y))\ar_{\tau_{\Gamma(X)\otimes_A\Gamma(Y)}} [d]\\ &&&&\Gamma(X) \otimes_A \Gamma(Y)
} $$
and the morphism $\zeta\colon \Gamma(L_1)\to A$ given as the image of $\xi$ under the adjunction isomorphism
$$\Hom_A(L_1,\Phi(A)) \cong \Hom_{A-A}(\Gamma(L_1), A).$$
\end{corollary}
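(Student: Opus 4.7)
The plan is to invoke the classical mate correspondence for adjoint functors, which transports lax monoidal (respectively lax pseudofunctorial) structure on a right adjoint to oplax structure on its left adjoint. Concretely, the lax pseudofunctor structure $(\kappa,\xi)$ on $\Phi$ established in Lemma \ref{Philax} transports across the adjunction $(\Gamma,\Phi)$ with unit $\sigma$ and counit $\tau$ to an oplax pseudofunctor structure $(\gamma,\zeta)$ on $\Gamma$, where $\gamma$ and $\zeta$ are the mates of $\kappa$ and $\xi$ respectively. The composite in the statement is precisely the unwinding of the mate formula for $\gamma$; for $\zeta$, the adjunction hom-isomorphism $\Hom_A(L_1,\Phi(A))\cong \Hom_{A-A}(\Gamma(L_1),A)$ sends $\xi$ to $\tau_A\circ \Gamma(\xi)$.

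First I would verify that $\gamma_{X,Y}$ and $\zeta$ are well-defined natural transformations. Since $\gamma_{X,Y}$ is built by composing the natural transformations $\sigma$, $\kappa$, and $\tau$ at appropriate components, naturality in $X$ and $Y$ is immediate from their naturality. Well-definedness of $\zeta$ as an $A$-$A$-bimodule morphism is automatic from its construction via the adjunction isomorphism.

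Next I would check the oplax coherence axioms. For the associativity pentagon, one expands the two composites $\Gamma((X\otimes_\Bbbk Y)\otimes_\Bbbk Z)\to \Gamma(X)\otimes_A\Gamma(Y)\otimes_A\Gamma(Z)$ using the definition of $\gamma$ and the convention that the associator for $\otimes_\Bbbk$ is the identity. Via the triangle identities $\tau\Gamma\circ \Gamma\sigma=\id_\Gamma$ and $\Phi\tau\circ \sigma\Phi=\id_\Phi$, together with naturality of $\sigma$, $\tau$, and $\kappa$, the check collapses onto the lax associativity diagram for $\kappa$ already verified in Lemma \ref{Philax}. The unit triangles for $(\gamma,\zeta)$ reduce in the same way to the unit triangles for $(\kappa,\xi)$ proved there. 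The main obstacle is bookkeeping rather than mathematical depth: writing out the pentagon explicitly involves juggling multiple instances of $\sigma$, $\tau$, and $\kappa$ and repeated appeals to naturality and the triangle identities, but once the pattern is set up the verification is entirely routine and ultimately rests on Lemma \ref{Philax}.
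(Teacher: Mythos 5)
Your proposal is correct and takes essentially the same route the paper intends: Corollary \ref{Gammaoplax} is stated without further proof precisely because it is the standard doctrinal-adjunction/mate construction applied to the lax structure $(\kappa,\xi)$ on $\Phi$ from Lemma \ref{Philax}, and the displayed composite is exactly the unwinding of the mate formula, with $\zeta=\tau_A\circ\Gamma(\xi)$ under the adjunction isomorphism. Your outline of reducing the oplax coherence axioms to the lax ones via the triangle identities and naturality is the expected (routine) justification.
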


Our goal is to show that $\Gamma$ is indeed a (strong) pseudofunctor, meaning $\gamma$ and $\zeta$ are isomorphims.

\begin{lemma} \label{zetaiso}
The morphism $\zeta$ defined in Corollary \ref{Gammaoplax} is an isomorphism.
\end{lemma}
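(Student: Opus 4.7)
The plan is to unwind the adjunction explicitly and then reduce to a basis computation using Lemma \ref{AbasisofB}\eqref{AbasisofB1}.

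First I would make $\zeta$ concrete. Under the adjunction $(\Gamma,\Phi)$, the counit $\tau_M\colon B_{\cdot A}\otimes_A M\to M$ is just the action map $(b\otimes c)\otimes m\mapsto bmc$, and $\zeta$ is the composite $\tau_A\circ \Gamma(\xi)$, where $\Gamma(\xi)\colon B_{\cdot A}\otimes_A L_1\to B_{\cdot A}\otimes_A A$ sends $(b\otimes c)\otimes v\mapsto (b\otimes c)\otimes 1_A$. Hence
\[ \zeta\bigl((b\otimes c)\otimes v\bigr)=bc. \]
Since $\zeta$ arises from a morphism in $\cR ep_A(\bullet,\bullet)$ via the adjunction isomorphism, it is automatically an $A$-$A$-bimodule homomorphism, so it suffices to verify bijectivity as a $\Bbbk$-linear map.

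Next I would compute $\Gamma(L_1)=B_{\cdot A}\otimes_A L_1$ using Lemma \ref{AbasisofB}\eqref{AbasisofB1}: fixing a $\Bbbk$-basis $\mathtt{A}$ of $A$, the set $\{1\otimes b\mid b\in\mathtt{A}\}$ is a free right $A$-module basis of $B_{\cdot A}$. Because $L_1=\Bbbk v$ is one-dimensional, this immediately yields the $\Bbbk$-basis
\[ \bigl\{(1\otimes b)\otimes v\,\bigm|\,b\in\mathtt{A}\bigr\} \]
of $\Gamma(L_1)$. Evaluating $\zeta$ on this basis gives $(1\otimes b)\otimes v\mapsto b$, which is precisely the chosen basis $\mathtt{A}$ of $A$. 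Therefore $\zeta$ is a $\Bbbk$-linear bijection, and combined with the previous paragraph, a bimodule isomorphism.

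There is no real obstacle here: once the right $A$-module structure on $B$ induced by $\varphi$ is recognised as free (which is the content of Lemma \ref{AbasisofB}\eqref{AbasisofB1}), the tensor product $B_{\cdot A}\otimes_A L_1$ collapses to a free $\Bbbk$-module of the same dimension as $A$, and the concrete formula $\zeta((b\otimes c)\otimes v)=bc$ matches bases directly. The only care required is to keep track of which $A$-action on $B$ is being used when tensoring over $A$, namely the one coming from $\varphi$, and not the outer bimodule structure that carries $\Gamma(L_1)$.
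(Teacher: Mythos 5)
Your proposal is correct and follows essentially the same route as the paper: both identify the $\Bbbk$-basis $\{(1\otimes b)\otimes v\mid b\in\mathtt{A}\}$ of $\Gamma(L_1)$ via Lemma \ref{AbasisofB}(b) and check that $\zeta$ sends it bijectively onto the basis $\mathtt{A}$ of $A$. The only cosmetic difference is that you unwind $\zeta$ as $\tau_A\circ\Gamma(\xi)$ with the counit realised as the action map, whereas the paper evaluates $\xi(v)\in\Hom_B(B_{\cdot A},A)$ directly; the resulting formula $\zeta((1\otimes b)\otimes v)=b$ is the same.
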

\proof
Recall that $\Phi(A)=A = \Hom_B(B_{\cdot A}, A)$ and $\zeta\colon \Gamma(L_1)\to A$ is defined as the image of $\xi: L_1 \to \Phi(A)$, as defined in the proof of Lemma \ref{Philax}, under the adjunction isomorphism
$$\Hom_A(L_1,\Phi(A)) \cong \Hom_{A-A}(\Gamma(L_1), A).$$
Identifying $\Phi(A)$ with $\Hom_B(B_{\cdot A}, A)$, we see that $\xi(v): B_{\cdot A} \to A$ is the unique map of $A$-$A$-bimodules sending $ 1 \otimes 1 \mapsto 1$.

By Lemma \ref{AbasisofB}\eqref{AbasisofB1} and the action of $A$ on $L_1$, a $\Bbbk$-basis of $\Gamma(L_1)$ is given by $1 \otimes b \otimes v$, where $b\in\mathtt{A}$, for a $\Bbbk$-basis $\mathtt{A}$ of $A$. Pulling $\xi$ through the adjunction, we see that 
$\zeta(1 \otimes b \otimes v)= \xi(v)(1\otimes b) = b$, thus $\zeta$ produces a bijection of $\Bbbk$-bases and is hence an isomorphism, as claimed.
\endproof

In order to do show that $\gamma$ is an isomorphism, we first determine the adjunction morphisms explicitly.

\begin{lemma}
The unit $\sigma\colon \Id\to \Phi\Gamma$ and counit $\tau \colon \Gamma\Phi\to \Id$ of the adjunction $(\Gamma, \Phi)$ are given by
$\varphi\otimes_A-$ and $\tilde{\tau}\otimes_{A\otimes A^{op}}-$, respectively, where
$$\tilde{\tau}\colon  B_{\cdot A}\otimes_A {}_{A\cdot}B \to B,  (a\otimes b)\otimes(c\otimes d) \mapsto ac\otimes db.$$
\end{lemma}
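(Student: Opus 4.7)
The approach is to obtain both formulas from the textbook tensor--hom adjunction $(B_{\cdot A}\otimes_A-)\dashv\Hom_B(B_{\cdot A},-)$, whose unit $\sigma'_M$ sends $m\in M$ to the $B$-linear map $b\mapsto b\otimes m$ and whose counit $\tau'_N$ sends $b\otimes f$ to $f(b)$, and then to transport these formulas along the natural isomorphism $\Hom_B(B_{\cdot A},-)\cong{}_{A\cdot}B\otimes_B-$ that identifies the two incarnations of $\Phi$ used in Section~\ref{Gamma}.

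First I would spell out the isomorphism explicitly as
\[
\Hom_B(B_{\cdot A},N)\;\xrightarrow{\sim}\;{}_{A\cdot}B\otimes_B N,\qquad f\longmapsto 1_B\otimes f(1_B),
\]
with inverse $c\otimes n\mapsto(b\mapsto bc\cdot n)$, and verify $A$-linearity: on the Hom side the left $A$-action is $(a\cdot f)(b)=f(b\cdot_\varphi a)$, while on ${}_{A\cdot}B\otimes_B N$ it is multiplication by $\varphi(a)$; the two agree under the isomorphism because $f$ is $B$-linear and hence $f(\varphi(a))=\varphi(a)\cdot f(1_B)$. Combined with the standard identification ${}_{A\cdot}B\otimes_B B_{\cdot A}\cong B$ of $A$-$A$-bimodules, this gives $\Phi\Gamma(M)\cong B\otimes_A M$ and $\Gamma\Phi(N)\cong B_{\cdot A}\otimes_A{}_{A\cdot}B\otimes_B N$.

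With this in place, transporting the counit becomes mechanical: the formula $\tau'_N(b\otimes f)=f(b)=b\cdot f(1_B)$ turns into $b\otimes c\otimes n\mapsto (bc)\cdot n$, and since the product in $B=A\otimes_\Bbbk A^{\op}$ of $(a\otimes b)$ and $(c\otimes d)$ is $ac\otimes db$, this is exactly $(\tilde\tau\otimes_B\id_N)$ postcomposed with the canonical iso $B\otimes_B N\cong N$. Transporting the unit, $\sigma'_M(m)$ corresponds to $1_B\otimes(1_B\otimes m)\in{}_{A\cdot}B\otimes_B B_{\cdot A}\otimes_A M$; under ${}_{A\cdot}B\otimes_B B_{\cdot A}\cong B$ and $A\otimes_A M\cong M$ this becomes $\varphi(1)\otimes m=(\varphi\otimes_A\id_M)(m)$, so $\sigma=\varphi\otimes_A-$. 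The only real subtlety, rather than a genuine obstacle, is keeping careful track of the various $A$-actions on $B$ induced by $\varphi$ when establishing the isomorphism $\Hom_B(B_{\cdot A},-)\cong{}_{A\cdot}B\otimes_B-$; once that is in hand, both formulas drop out automatically.
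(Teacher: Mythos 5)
Your proposal is correct, but it takes a genuinely different route from the paper. The paper does not derive the formulas from the abstract tensor--hom adjunction; instead it takes the candidate maps $\tilde\sigma=\varphi\otimes_A-$ and $\tilde\tau\otimes_B-$ as given and verifies the two triangle identities $\Phi\to\Phi\Gamma\Phi\to\Phi$ and $\Gamma\to\Gamma\Phi\Gamma\to\Gamma$ directly on the representing bimodules, which forces explicit Hopf-algebra computations: the key point there is that the canonical isomorphism $A\otimes_A{}_{\cdot A}B\cong{}_{\cdot A}B$ identifies $a\otimes(1\otimes b)$ with $a\cdot_\varphi(1\otimes b)=\sum a^{(1)}\otimes bS(a^{(2)})$ (and similarly on the other side), after which both zigzags visibly return the identity. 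Your argument instead transports the textbook unit $m\mapsto(p\mapsto p\otimes m)$ and counit $p\otimes f\mapsto f(p)$ of $(B_{\cdot A}\otimes_A-)\dashv\Hom_B(B_{\cdot A},-)$ along the isomorphism $\Hom_B(B_{\cdot A},-)\cong{}_{A\cdot}B\otimes_B-$, so that the comultiplication and antipode never appear explicitly; they are absorbed into the multiplication of $B$ and the map $\varphi$. This is cleaner and arguably proves the statement more literally (it identifies the unit and counit of \emph{the} canonical adjunction rather than exhibiting \emph{a} valid unit--counit pair, though by uniqueness of adjoints the two are equivalent). What the paper's route buys is a self-contained check that doubles as a warm-up for the explicit $\sum a^{(1)}\otimes bS(a^{(2)})$ manipulations used again in the proofs of Lemmas \ref{Yabc} and \ref{gammaiso}. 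The one point you should make fully explicit is the $A$-$A$-bimodule identification ${}_{A\cdot}B\otimes_BB_{\cdot A}\cong{}_{A\cdot}B_{\cdot A}$ underlying your description of $\Phi\Gamma(M)$, but as you note this is routine bookkeeping of the $\varphi$-induced actions rather than a gap.
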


\proof
In order to check that the compositions 
$$\xymatrix{\Phi\ar[r]^(.4){\sigma\circ_0\id}& \Phi\Gamma\Phi\ar[r]^(.6){\id\circ_0\tau}& \Phi}\qquad \text{and}\qquad \xymatrix{\Gamma\ar[r]^(.4){\id\circ_0\sigma}&\Gamma\Phi\Gamma\ar[r]^(.6){\tau\circ_0\id}& \Gamma}$$ are the respective identities, it suffices to check the representing maps on bimodules. 

Note that the natural isomorphism $A\otimes_A{}_{\cdot A}B \cong {}_{\cdot A}B$ identifies $a\otimes_A (1 \otimes b)$ with $ a \cdot_{\varphi} (1 \otimes b) = \sum a^{(1)} \otimes b S(a^{(2)})$, thus the composition
$$\begin{array}{ccccc}
A\otimes_A{}_{\cdot A}B&\xrightarrow{\tilde\sigma\otimes\id}&{}_{\cdot A}B_{\cdot A}\otimes_A {}_{A\cdot}B&\xrightarrow{\id \otimes \tilde\tau}&{}_{\cdot A}B,\\
a\otimes_A (1 \otimes b) &\longmapsto&\sum a^{(1)}\otimes S(a^{(2)}) \otimes 1 \otimes b &\longmapsto& \sum a^{(1)} \otimes b S(a^{(2)})
\end{array}$$
is indeed the identity.

Similarly, since the natural isomorphism $B_{\cdot A} \otimes_A A$ identifies $(1 \otimes b)\otimes a$ with $(1 \otimes b) \cdot_{\varphi} a=\sum a^{(1)} \otimes S(a^{(2)})b$, 
the second composition given by
$$\begin{array}{ccccc}
B_{\cdot A}\otimes_A A& \xrightarrow{\id\circ_0\otimes \tilde\sigma} &B_{\cdot A} \otimes_{\cdot A} B\otimes_B B_{\cdot A} &\xrightarrow{\tilde\tau\otimes\id} &B_{\cdot A}\\
(1\otimes b)\otimes_A a &\longmapsto &1 \otimes b \otimes_A \sum a^{(1)}\otimes S(a^{(2)})& \longmapsto& \sum a^{(1)} \otimes S(a^{(2)})b
\end{array}$$
is also the identity.
\endproof

\begin{lemma}\label{AAimpliesXY}
If $\gamma_{A,A}$ is an isomorphism, then $\gamma_{X,Y}$ is an isomorphism for all $X,Y\in \cR ep_A (\bullet,\bullet)$.
\end{lemma}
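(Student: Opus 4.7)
The plan is to bootstrap from the assumed case $X=Y=A$ using additivity of $\gamma$ together with right-exactness of the two functors $\Gamma(-\otimes_\Bbbk-)$ and $\Gamma(-)\otimes_A\Gamma(-)$ in each variable, applied to free presentations. All $A$-modules in sight are finite-dimensional and hence finitely generated, so free presentations exist.

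First I would record that $\gamma$ is additive in each argument: both $\Gamma(-\otimes_\Bbbk-)$ and $\Gamma(-)\otimes_A\Gamma(-)$ preserve finite biproducts in each variable (since $\Gamma$, $-\otimes_\Bbbk-$ and $-\otimes_A-$ all do) and $\gamma$ is natural, so the hypothesis that $\gamma_{A,A}$ is an isomorphism immediately implies that $\gamma_{A^m,A^n}$ is an isomorphism for all $m,n\geq 0$.

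Next I would establish the necessary exactness properties. The functor $\Gamma=B_{\cdot A}\otimes_A-$ is exact because, by Lemma \ref{AbasisofB}\eqref{AbasisofB1}, $B_{\cdot A}$ is free as a right $A$-module. Since $A\otimes_\Bbbk -$ is exact ($\Bbbk$ being a field), the composite $\Gamma(A\otimes_\Bbbk-)$ is exact; and $\Gamma(A)\otimes_A\Gamma(-)$ is right-exact, being the composite of the exact functor $\Gamma$ with the right-exact functor $\Gamma(A)\otimes_A-$. The analogous statements hold symmetrically in the first variable.

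Finally, I would fix $Y\in\cR ep_A(\bullet,\bullet)$, choose a free presentation $A^m\to A^n\to Y\to 0$, apply both functors in the second variable, and use naturality of $\gamma$ to obtain a commutative diagram in $\cB_A(\bullet,\bullet)$ with exact rows whose two leftmost vertical maps are isomorphisms by the additivity observation; the five lemma then forces $\gamma_{A,Y}$ to be an isomorphism. Repeating the argument in the first variable with a free presentation of an arbitrary $X$, and using that $\gamma_{A^n,Y}$ is by then known to be an isomorphism, gives $\gamma_{X,Y}$ an isomorphism. The only real subtlety is pinning down the exactness of $\Gamma$ via Lemma \ref{AbasisofB}; after that, the argument is a routine diagram chase.
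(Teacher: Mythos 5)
Your argument is correct and follows essentially the same route as the paper: reduce to the free case via exactness of $\Gamma$ (from Lemma \ref{AbasisofB}), right-exactness of the relevant tensor functors, free presentations, and naturality of $\gamma$. The only cosmetic difference is that you resolve one variable at a time, whereas the paper tensors the two presentations together into a single free presentation of $X\otimes_\Bbbk Y$ and concludes in one diagram; both come down to the same cokernel comparison.
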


\begin{proof}
Let $X, Y$ be objects in $\cR ep_A (\bullet,\bullet)$ with free presentations $F_1 \xrightarrow{f} F_0 \twoheadrightarrow X$ and $G_1 \xrightarrow{g} G_0 \twoheadrightarrow Y$ in $\cR ep_A (\bullet,\bullet)$. Since the bifunctor $ - \otimes_{\Bbbk} -$ giving the monoidal structure on $\cR ep_A (\bullet,\bullet)$ is exact, we obtain that 
$$
\xymatrix{
F_1 \otimes_{\Bbbk} G_0 \oplus F_0 \otimes_{\Bbbk} G_1 \ar^(.6){(f \otimes \id, \id \otimes g)}[rr]
&& F_0 \otimes_{\Bbbk} G_0 \ar@{->>}[r]
& X \otimes_{\Bbbk} Y
} 
 $$
is a free presentation of $X \otimes_{\Bbbk} Y$ in $\cR ep_A (\bullet,\bullet)$.

Since $\Gamma(A)=B$ and $\Gamma$ is exact
$$\xymatrix{
\Gamma(F_1 \otimes_{\Bbbk} G_0 \oplus F_0 \otimes_{\Bbbk} G_1) \ar[rr]^(.6){\Gamma(f \otimes \id, \id \otimes g)} 
&&\Gamma(F_0 \otimes_{\Bbbk} G_0) \ar@{->>}[r] & \Gamma(X \otimes_{\Bbbk} Y)
}
$$
is a free presentation of $\Gamma(X \otimes_{\Bbbk} Y)$ in $\cB _A (\bullet,\bullet)$.
Thus, we have a diagram
$$
\xymatrix{
\Gamma(F_1 \otimes_{\Bbbk} G_0) \oplus \Gamma(F_0 \otimes_{\Bbbk} G_1) \ar^(.6){ (\Gamma(f \otimes \id), \Gamma(\id \otimes g))}[rr] \ar^{\left( \begin{smallmatrix} \gamma_{F_{1},G_{0}}& 0\\ 0& \gamma_{F_{0},G_{1}}\end{smallmatrix} \right)}[d] 
&& \Gamma(F_0 \otimes_{\Bbbk} G_0) \ar@{->>}[r] \ar^{\gamma_{F_{0},G_{0}}}[d]
& \Gamma(X \otimes_{\Bbbk} Y) \ar@{-->}[d]^{\gamma_{X,Y}} \\
\Gamma(F_1) \otimes_A \Gamma(G_0) \oplus \Gamma(F_0) \otimes_A \Gamma (G_1) \ar^(.625){ (\Gamma(f) \otimes \id, \id\otimes\Gamma( g))}[rr] 
&& \Gamma(F_0) \otimes_A \Gamma(G_0) \ar@{->>}[r] 
&  \Gamma(X) \otimes_A \Gamma(Y) 
}.
$$
Since the $\Gamma(F_i),\Gamma(G_i), i=0,1,$ are free $B$- and in particular free $A$-modules, the bottom row is a free presentation of $\Gamma(X) \otimes_A \Gamma(Y) $. By naturality of $\gamma$, the induced cokernel map is $\gamma_{X,Y}$. As $F_0, F_1, G_0$ and $G_1$ are free $A$-modules, the maps $\gamma_{F_{i}, G_{j}}$, where $i,j \in \{0,1\}$, are direct sums of copies of $\gamma_{A,A}$ and are thus isomorphisms. Since the two vertical maps in the diagram above are isomorphisms, it follows that so is $\gamma_{X,Y}$.
\end{proof}

In order to prove that $\gamma_{A,A}$ is an isomorphism, we now provide basis for its domain and codomain. To this end, let again $\mathtt{A}$ be a $\Bbbk$-basis of $A$.

\begin{lemma} \label{Yabc}
For $a,b,c\in A$, set $Y_{abc} := \sum a^{(1)} \otimes_{\Bbbk} S(a^{(2)})b \otimes_A 1 \otimes_{\Bbbk} c$. Then
$\{ Y_{abc} \ | \ a,b,c \in \mathtt{A} \}$ is a basis for $\Gamma(A\otimes_\Bbbk A)$.
\end{lemma}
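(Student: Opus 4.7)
The plan is to compute $\Gamma(A \otimes_\Bbbk A) = B_{\cdot A} \otimes_A (A \otimes_\Bbbk A)$ as a $\Bbbk$-vector space by combining the two freeness statements in Lemma \ref{AbasisofB}, producing a natural $\Bbbk$-basis of the tensor product, and then to recognise that basis as $\{Y_{abc}\}$ after moving the element $a \in A$ across the balanced tensor product.

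More concretely, I would first invoke Lemma \ref{AbasisofB}\eqref{AbasisofB1} to see that $B_{\cdot A}$ is free as a right $A$-module on $\{1 \otimes b \mid b \in \mathtt{A}\}$. Independently, by Lemma \ref{AbasisofB}\eqref{AbasisofB0}, the set $\{\sum a^{(1)} \otimes a^{(2)} c \mid a, c \in \mathtt{A}\}$ is a $\Bbbk$-basis of $A \otimes_\Bbbk A$. The standard description of the tensor product of a free right $A$-module with an arbitrary left $A$-module then yields
$$\left\{(1 \otimes b) \otimes_A \sum a^{(1)} \otimes a^{(2)} c \;\middle|\; a, b, c \in \mathtt{A}\right\}$$
as a $\Bbbk$-basis of $B_{\cdot A} \otimes_A (A \otimes_\Bbbk A) = \Gamma(A \otimes_\Bbbk A)$; note that the left $A$-action on $A \otimes_\Bbbk A$ here is the diagonal action $a \cdot (x \otimes y) = \sum a^{(1)} x \otimes a^{(2)} y$, so $\sum a^{(1)} \otimes a^{(2)} c = a \cdot (1 \otimes c)$.

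To finish, I would move $a$ across the tensor product using the $A$-balancing together with the right $A$-action \eqref{AactiononB} on $B_{\cdot A}$:
$$
(1 \otimes b) \otimes_A \sum a^{(1)} \otimes a^{(2)} c = (1 \otimes b) \otimes_A \bigl(a \cdot (1 \otimes c)\bigr) = \bigl((1 \otimes b) \cdot_\varphi a\bigr) \otimes_A (1 \otimes c) = \left(\sum a^{(1)} \otimes S(a^{(2)}) b\right) \otimes_A (1 \otimes c) = Y_{abc}.
$$
Since $(a,b,c) \mapsto Y_{abc}$ is thus simply a relabelling of the basis found above, the claim follows. There is no real obstacle here; the only bookkeeping subtlety is tracking how the element $\sum a^{(1)} \otimes a^{(2)} c$ plays two different roles (as a basis vector of $A \otimes_\Bbbk A$ and as $a$ acting on $1 \otimes c$) and that these match up via the balancing, which is exactly what the above computation records.
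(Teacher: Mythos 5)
Your proposal is correct and is essentially the paper's own argument: both use Lemma \ref{AbasisofB} to get a $\Bbbk$-basis of $B_{\cdot A}\otimes_A(A\otimes_\Bbbk A)$ from the freeness of $B_{\cdot A}$ as a right $A$-module on $\{1\otimes b\}$ together with the basis $\{\sum a^{(1)}\otimes a^{(2)}c\}$ of $A\otimes_\Bbbk A$, and then move $a$ across the balanced tensor product via the right action \eqref{AactiononB} to identify that basis with $\{Y_{abc}\}$. The only cosmetic difference is that the paper phrases the intermediate step as inserting $\otimes_A A\otimes_A$ and writing the basis as $1\otimes b\otimes_A a\otimes_A 1\otimes c$, whereas you absorb $a$ directly as $a\cdot(1\otimes c)$.
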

\begin{proof}
Note that $\Gamma(A\otimes_\Bbbk A) =B_{\cdot A} \otimes_A (A \otimes_{\Bbbk} A)$, where the left action of $A$ on $A\otimes_\Bbbk A$ is induced by the coproduct. By Lemma \ref{AbasisofB}, both the left and the right side of the tensor product are free as right, respectively left, $A$-modules on bases $\{1 \otimes b \ \vert \ b\in \mathtt{A}  \}$, respectively $\{1 \otimes c \ \vert  \ c\in \mathtt{A}  \}$. Thus 
$B_{\cdot A} \otimes_A (A \otimes_{\Bbbk} A) =  B_{\cdot A} \otimes_A A\otimes _A (A \otimes_{\Bbbk} A )$ (where we again treat the canonical isomorphism as an identity) has $\Bbbk$-basis $\{1 \otimes b \otimes_A a \otimes_A 1 \otimes_{\Bbbk} c \ \vert \ a,b,c \in \mathtt{A}\}$, and using the the definition of the action in \eqref{AactiononB}, we see that $1 \otimes b \otimes_A a \otimes_A 1 \otimes_{\Bbbk} c = Y_{a,b,c}$,  completing the proof.
\end{proof}

\begin{lemma} \label{Xabc} For $a,b,c \in A$, set $X_{abc} := \sum a^{(1)} \otimes c^{(1)} \otimes S(a^{(2)}c^{(2)})b$. Then
$\{ X_{abc}  \ | \ a,b,c \in \mathtt{A}\}$ is a $\Bbbk$-basis of $A \otimes_{\Bbbk} A \otimes_{\Bbbk} A$.
\end{lemma}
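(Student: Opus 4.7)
The plan is to mirror the strategy already used in Lemma \ref{AbasisofB}: I will exhibit the set $\{X_{abc}\,|\,a,b,c\in\mathtt{A}\}$ as the image of the standard tensor basis $\{a\otimes b\otimes c\,|\,a,b,c\in\mathtt{A}\}$ under an explicit $\Bbbk$-linear automorphism $\Psi$ of $A\otimes_{\Bbbk}A\otimes_{\Bbbk}A$, constructed from the Hopf algebra operations together with the isomorphism $g$ that appears in the proof of Lemma \ref{AbasisofB}.

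Recall that $g\colon A\otimes A\to A\otimes A$, defined by $g(a\otimes b)=\sum a^{(1)}\otimes S(a^{(2)})b$, is a $\Bbbk$-linear isomorphism (this is the map whose bijectivity was the key step in Lemma \ref{AbasisofB}\eqref{AbasisofB1}). Let $\sigma_{23}\colon A^{\otimes 3}\to A^{\otimes 3}$ denote the swap of the second and third tensor factors. I then set
$$\Psi\;:=\;(\id\otimes g)\circ \sigma_{23}\circ (g\otimes \id),$$
which is a composition of $\Bbbk$-linear isomorphisms and hence itself a $\Bbbk$-linear isomorphism.

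The remaining step is a direct computation showing $\Psi(a\otimes b\otimes c)=X_{abc}$. Applying $g\otimes \id$ gives $\sum a^{(1)}\otimes S(a^{(2)})b\otimes c$; the swap $\sigma_{23}$ then produces $\sum a^{(1)}\otimes c\otimes S(a^{(2)})b$; and finally $\id\otimes g$ yields $\sum a^{(1)}\otimes c^{(1)}\otimes S(c^{(2)})S(a^{(2)})b$. Using that $S$ is an anti-algebra homomorphism, so $S(c^{(2)})S(a^{(2)})=S(a^{(2)}c^{(2)})$, this last expression is $X_{abc}$. Since $\Psi$ is an isomorphism sending the standard basis to $\{X_{abc}\}$, the latter is a $\Bbbk$-basis of $A\otimes_{\Bbbk}A\otimes_{\Bbbk}A$, as claimed.

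The only delicate point is the ordering of the composition: it is essential that $g\otimes \id$ be applied before $\id\otimes g$ (with the swap in between), so that the anti-homomorphism property of $S$ produces the product $S(a^{(2)}c^{(2)})$ in the third tensor factor rather than $S(c^{(2)}a^{(2)})$, which would not match the definition of $X_{abc}$.
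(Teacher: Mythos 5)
Your proof is correct. The computation $\Psi(a\otimes b\otimes c)=\sum a^{(1)}\otimes c^{(1)}\otimes S(c^{(2)})S(a^{(2)})b=X_{abc}$ checks out, and your remark about the order of the two applications of $g$ is exactly the right point to worry about: reversing it would produce $S(c^{(2)}a^{(2)})b$ instead.

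The route differs from the paper's in how invertibility is established. The paper writes down the map $\varphi\colon a\otimes c\otimes b\mapsto X_{abc}$ directly, together with an explicit candidate inverse $\psi\colon a\otimes c\otimes b\mapsto \sum a^{(1)}\otimes c^{(1)}\otimes a^{(2)}c^{(2)}b$, and verifies $\psi\circ\varphi=\id$ by a fresh computation using coassociativity and the antipode axiom. You instead factor the relevant map as $(\id\otimes g)\circ\sigma_{23}\circ(g\otimes\id)$, outsourcing all invertibility questions to the already-proven bijectivity of $g$ from Lemma \ref{AbasisofB} and paying only the (standard) fact that $S$ is an algebra anti-homomorphism to match the formula for $X_{abc}$. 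Your version reuses prior work and avoids a second Hopf-axiom verification; the paper's version is self-contained and incidentally exhibits the inverse map explicitly, which can be convenient downstream. Both are complete proofs of the statement.
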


\begin{proof}
Consider the $\Bbbk$-basis  of $A \otimes_{\Bbbk} A \otimes_{\Bbbk} A$ given by $\{ a \otimes c \otimes b\ |\ a,b,c \in\mathtt{A} \}$.

Define the maps
$$\varphi: A \otimes_{\Bbbk} A \otimes_{\Bbbk} A \to A \otimes_{\Bbbk} A \otimes_{\Bbbk} A, \quad a \otimes_{\Bbbk} c \otimes_{\Bbbk} b \mapsto \sum a^{(1)} \otimes_{\Bbbk} c^{(1)} \otimes_{\Bbbk} S(a^{(2)}c^{(2)})b $$
and
$$\psi: A \otimes_{\Bbbk} A \otimes_{\Bbbk} A \to A \otimes_{\Bbbk} A \otimes_{\Bbbk} A, \quad  a \otimes_{\Bbbk} c \otimes_{\Bbbk} b \mapsto \sum a^{(1)} \otimes_{\Bbbk} c^{(1)} \otimes_{\Bbbk} a^{(2)}c^{(2)}b.$$

To check that $\varphi$ and $\psi$ are inverse to each other, we compute
\begin{equation*}\begin{split}
\psi(\varphi(a \otimes c \otimes b))
&=\psi( \sum a^{(1)} \otimes c^{(1)} \otimes S(a^{(2)}c^{(2)})b)\\
&= \sum a^{(1)} \otimes c^{(1)} \otimes a^{(2)}c^{(2)}S(a^{(3)}c^{(3)})b
\end{split}
\end{equation*}
where we have used coassociativity of the comultiplication in the indexing. Using the Hopf algebra axiom, $\sum a^{(1)} \otimes c^{(1)} \otimes a^{(2)}c^{(2)}S(a^{(3)}c^{(3)}) = a\otimes c \otimes 1$, and hence $\psi(\varphi(a \otimes c \otimes b)) = a \otimes c \otimes b$. Thus $\{ X_{abc}  \ | \ a,b,c \in \mathtt{A}\}$ is the image of a $\Bbbk$-basis of $A \otimes_{\Bbbk} A \otimes_{\Bbbk} A$ under an isomorphism, hence also a $\Bbbk$-basis.
\end{proof}

\begin{lemma}\label{gammaiso}
The map $\gamma_{A,A}$ is an isomorphism.
\end{lemma}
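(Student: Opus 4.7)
My plan is to show $\gamma_{A,A}$ is an isomorphism by evaluating it on the explicit basis $\{Y_{abc}\}_{a,b,c\in\mathtt{A}}$ of $\Gamma(A\otimes_\Bbbk A)$ from Lemma \ref{Yabc} and checking that the images form a basis of $\Gamma(A)\otimes_A \Gamma(A) = B\otimes_A B$. First I would unwind the composite defining $\gamma$ in Corollary \ref{Gammaoplax}, using that $\sigma_A(a) = \varphi(a) = \sum a^{(1)}\otimes S(a^{(2)})$, that $\kappa_{B,B}$ is the canonical projection $B\otimes_\Bbbk B \twoheadrightarrow B\otimes_A B$, and that $\tau_M(\beta\otimes m) = \beta m$ (a direct consequence of the formula for $\tilde{\tau}$). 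A short calculation then yields
\[
\gamma_{A,A}(Y_{abc}) \;=\; \sum \bigl(a^{(1)}\otimes S(a^{(2)})b\bigr)\otimes_A \bigl(c^{(1)}\otimes S(c^{(2)})\bigr) \;\in\; B\otimes_A B.
\]
Since $B$ is the free $A$-$A$-bimodule, the canonical identification $B\otimes_A B \cong A\otimes_\Bbbk A\otimes_\Bbbk A$ given by $(u\otimes v)\otimes_A (u'\otimes v')\mapsto u\otimes vu'\otimes v'$, combined with the relation $(u\otimes v)\otimes_A(u'\otimes v') = (u\otimes 1)\otimes_A(vu'\otimes v')$ used to put the image in canonical form, transports $\gamma_{A,A}(Y_{abc})$ to
\[
\tilde{Z}_{abc} \;:=\; \sum a^{(1)}\otimes S(a^{(2)})b c^{(1)}\otimes S(c^{(2)}) \;\in\; A\otimes_\Bbbk A\otimes_\Bbbk A.
\]

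The problem then reduces to showing $\{\tilde{Z}_{abc}\}_{a,b,c\in\mathtt{A}}$ is a $\Bbbk$-basis of $A\otimes_\Bbbk A\otimes_\Bbbk A$. I would factor the linear map $\Xi\colon a\otimes b\otimes c\mapsto \tilde{Z}_{abc}$ as the composite $(\id\otimes\tilde{g})\circ(g\otimes\id)$, where $g(a\otimes b) = \sum a^{(1)}\otimes S(a^{(2)})b$ is precisely the inverse isomorphism $f^{-1}$ produced in the proof of Lemma \ref{AbasisofB}, and $\tilde{g}(y\otimes c) = \sum y c^{(1)}\otimes S(c^{(2)})$. The map $\tilde{g}$ is itself an isomorphism: writing $\tilde{g} = (\id\otimes S)\circ\Psi'$ with $\Psi'(y\otimes c) = \sum y c^{(1)}\otimes c^{(2)}$, one checks (via a commutative diagram analogous to that in Lemma \ref{AbasisofB}) that $\Psi'$ has explicit inverse $y\otimes c\mapsto \sum y S(c^{(1)})\otimes c^{(2)}$, and $S$ is bijective (standard for finite-dimensional Hopf algebras). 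Hence $\Xi$ is a composition of isomorphisms, so $\{\tilde{Z}_{abc}\}$ is a basis and $\gamma_{A,A}$ sends a basis to a basis, as required.

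The main obstacle is to keep straight the two distinct $A$-module structures on $B$: the ``direct'' $A$-$A$-bimodule structure (which governs the $\otimes_A$ in the target $\Gamma(A)\otimes_A\Gamma(A)$) versus the $\cdot_\varphi$ structure (which enters through $\Phi$, $\Gamma$, and the adjunction unit/counit). Once the outer tensor product in the evaluation of $\gamma_{A,A}(Y_{abc})$ is correctly interpreted via the direct bimodule structure (so that one can freely move elements of $A$ across $\otimes_A$ via the rule $(u\otimes va)\otimes(u'\otimes v') = (u\otimes v)\otimes(au'\otimes v')$), the rest of the argument reduces to the Hopf-algebraic invertibility of the twisted maps $g$ and $\tilde{g}$, both close variants of the isomorphism $f$ from Lemma \ref{AbasisofB}.
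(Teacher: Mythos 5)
Your strategy is the same as the paper's (evaluate $\gamma_{A,A}$ on the basis $\{Y_{abc}\}$ of Lemma \ref{Yabc} and check that the images form a basis of the codomain), but the evaluation itself contains an error at the counit, and as a result the elements you exhibit are not the images of the $Y_{abc}$. Your stated formula $\tau_M(\beta\otimes m)=\beta m$ is correct provided $\beta m$ means the action of $\beta\in B=A\otimes_\Bbbk A^{\op}$ on $m$ as an element of the $A$-$A$-bimodule $M$. For $M=\Gamma(A)\otimes_A\Gamma(A)=B\otimes_A B$ this is the \emph{outer} bimodule action, so for $\beta=u\otimes v$ one has $\beta\cdot\bigl((p\otimes q)\otimes_A(r\otimes s)\bigr)=(up\otimes q)\otimes_A(r\otimes sv)$: the component $v$ must act on the last tensor leg. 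Your computation instead multiplies $\beta=\sum a^{(1)}\otimes S(a^{(2)})b$ into the first factor by ring multiplication in $B$, which deposits $S(a^{(2)})b$ on the second leg of the first factor. The correct image is
\[
\gamma_{A,A}(Y_{abc})=\sum \bigl(a^{(1)}\otimes 1\bigr)\otimes_A\bigl(c^{(1)}\otimes S(c^{(2)})S(a^{(2)})b\bigr)\ \longleftrightarrow\ \sum a^{(1)}\otimes c^{(1)}\otimes S(a^{(2)}c^{(2)})b,
\]
not your $\tilde Z_{abc}=\sum a^{(1)}\otimes S(a^{(2)})bc^{(1)}\otimes S(c^{(2)})$, and these genuinely differ: for $A=\Bbbk G$ the group algebra of a nontrivial group (all of $\Delta$, $S$ grouplike) with $a=c=1$ and $b=t\neq 1$, the first expression gives $1\otimes 1\otimes t$ while the second gives $1\otimes t\otimes 1$.

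Consequently, although your verification that $\{\tilde Z_{abc}\}$ is a $\Bbbk$-basis of $A^{\otimes 3}$ is fine (the factorization through the twisted isomorphisms $g$ and $\tilde g$ works), it does not prove the lemma, because $\{\tilde Z_{abc}\}$ is not the image of $\{Y_{abc}\}$ under $\gamma_{A,A}$; you have shown that a certain set which happens to be a basis is a basis. The repair is exactly the paper's argument: applying the counit correctly yields $\gamma_{A,A}(Y_{abc})=X_{abc}:=\sum a^{(1)}\otimes c^{(1)}\otimes S(a^{(2)}c^{(2)})b$, and one checks that $a\otimes c\otimes b\mapsto X_{abc}$ is invertible with explicit inverse $a\otimes c\otimes b\mapsto\sum a^{(1)}\otimes c^{(1)}\otimes a^{(2)}c^{(2)}b$ (using the antipode axiom applied to $ac$) — an argument of the same flavour as your treatment of $g$ and $\tilde g$, but for a map twisted by the coproducts of both of the first two legs simultaneously rather than a composite of two one-leg twists.
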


\proof
Recall from Corollary \ref{Gammaoplax} that $$\gamma_{A,A} = \tau_{\Gamma A \otimes_A \Gamma A}\circ \Gamma(\kappa_{\Gamma A, \Gamma A} \circ (\sigma_A \otimes \sigma_A))\ \colon \,\,\Gamma(A\otimes_\Bbbk A) \to \Gamma(A)\otimes_A\Gamma(A).$$ By Lemma \ref{Yabc}, the elements $Y_{abc}=\sum a^{(1)} \otimes_{\Bbbk} S(a^{(2)})b \otimes_A 1 \otimes_{\Bbbk} c$, for $a,b,c\in \mathtt{A}$, form a basis of the domain.


Note that as a vector space $\Gamma(A)\otimes_A \Gamma(A) = (A\otimes_\Bbbk A)\otimes _A(A\otimes_\Bbbk A)$, where the $A$ action on the left tensor factor $A\otimes_\Bbbk A$ is given by \eqref{AactiononB}, and the action on the right tensor factor $A\otimes_\Bbbk A$ is just the left action on its left tensor factor $A$. It follows that $\Gamma(A)\otimes_A \Gamma(A) \cong A\otimes_\Bbbk A\otimes_\Bbbk A)$ as a vector space, and a $\Bbbk$-basis is given by $\{ X_{abc}  \ | \ a,b,c \in \mathtt{A}\}$.

We claim that $\gamma_{A,A}(Y_{abc}) = X_{abc}$. 

Indeed, first identifying $Y_{abc}$ with $a^{(1)} \otimes_{\Bbbk} S(a^{(2)})b \otimes_A 1 \otimes c \in \Gamma(A\otimes_\Bbbk A) = B_{\cdot A}\otimes_A(A\otimes_\Bbbk A) $ compute
\begin{equation*}
\begin{split}
\gamma_{A,A}(Y_{abc}) &= \tau_{\Gamma A \otimes_A \Gamma A}\circ \Gamma(\kappa_{\Gamma A, \Gamma A} \circ (\sigma_A \otimes \sigma_A)) ((a^{(1)} \otimes S(a^{(2)})b )\otimes_A (1 \otimes c))\\
&=  \tau_{\Gamma A \otimes_A \Gamma A}\circ \Gamma(\kappa_{\Gamma A, \Gamma A} )   ((a^{(1)} \otimes S(a^{(2)})b )\otimes_A (1 \otimes 1 \otimes c^{(1)} \otimes S(c^{(2)})))\\
&=\tau_{\Gamma A \otimes_A \Gamma A}((a^{(1)} \otimes S(a^{(2)})b) \otimes_A (1\otimes c^{(1)} \otimes S(c^{(2)})))\\
&=a^{(1)} \otimes c^{(1)} \otimes S(c^{(2)})S(a^{(2)})b\\
&=a^{(1)} \otimes c^{(1)} \otimes S(a^{(2)}c^{(2)})b = X_{abc},
\end{split}
\end{equation*}
proving the claim. Thus, $\gamma_{A,A}$ is bijective, and the statement follows.
%
%
\endproof

\begin{proposition}\label{Gammapseudo}
$\Gamma$ is a strong pseudofunctor.
\end{proposition}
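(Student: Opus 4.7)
The plan is essentially to assemble the preceding lemmas. By Corollary \ref{Gammaoplax}, $\Gamma$ is already known to be an oplax pseudofunctor, equipped with the structure morphism $\zeta\colon \Gamma(L_1)\to A$ and the natural transformation $\gamma\colon \Gamma(-\otimes_\Bbbk -)\to \Gamma(-)\otimes_A\Gamma(-)$. To upgrade this to a strong pseudofunctor, the only remaining task is to verify that these structure morphisms are all invertible; coherence is automatic from the oplax structure.

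First, I would invoke Lemma \ref{zetaiso}, which establishes that $\zeta$ is an isomorphism. For the natural transformation $\gamma$, I would appeal to Lemma \ref{gammaiso}, which shows that the single component $\gamma_{A,A}$ is an isomorphism. Combining this with Lemma \ref{AAimpliesXY}, which uses free presentations in $\cR ep_A(\bullet,\bullet)$ together with the exactness of $\Gamma$ (a consequence of Lemma \ref{AbasisofB}\eqref{AbasisofB1}) and the exactness of $-\otimes_\Bbbk -$, one concludes that $\gamma_{X,Y}$ is an isomorphism for all $X,Y\in\cR ep_A(\bullet,\bullet)$.

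There is essentially no further obstacle: the real work has been done in establishing the oplax structure and in the explicit basis computation of Lemma \ref{gammaiso}. The assembly at this stage is purely formal, and amounts to observing that an oplax pseudofunctor whose structure morphisms are invertible is, by definition, a strong pseudofunctor.
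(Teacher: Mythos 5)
Your proposal is correct and follows exactly the paper's argument: the proof is simply the assembly of Lemma \ref{zetaiso} (invertibility of $\zeta$), Lemma \ref{gammaiso} (invertibility of $\gamma_{A,A}$) and Lemma \ref{AAimpliesXY} (propagation to all $\gamma_{X,Y}$), applied to the oplax structure from Corollary \ref{Gammaoplax}. No further comment is needed.
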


\proof
This follows from Lemmas \ref{zetaiso}, \ref{AAimpliesXY} and \ref{gammaiso}
\endproof

\begin{corollary}\label{Gammares}
The pseudofunctor $\Gamma$ restricts to a pseudofunctor from $\cH_A$ to $\cC_A$.
\end{corollary}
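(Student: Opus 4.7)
The plan is to verify that the pseudofunctor $\Gamma$ from Proposition \ref{Gammapseudo}, restricted along the inclusions $\cH_A\hookrightarrow \cR ep_A$ and $\cC_A\hookrightarrow \cB_A$, lands in $\cC_A$. Since both $\cH_A$ and $\cC_A$ are $2$-full subbicategories, and both contain a unique object, the only thing to check is that every $1$-morphism of $\cH_A$ is sent to a $1$-morphism of $\cC_A$; the coherence data of $\Gamma$ (the maps $\gamma$ and $\zeta$) will then automatically restrict, as will its action on $2$-morphisms.

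Because $\Gamma = B_{\cdot A}\otimes_A -$ is additive (in fact, left adjoint to $\Phi$), it preserves the additive closure operation, so it suffices to verify the condition on the two generators $A$ and $L_1$ of the $1$-morphisms in $\cH_A$. For $A$, we directly compute
\[
\Gamma(A) \;=\; B_{\cdot A}\otimes_A A \;\cong\; B \;=\; A\otimes_{\Bbbk} A,
\]
which lies in $\add(A\oplus A\otimes_{\Bbbk}A)$, hence is a $1$-morphism of $\cC_A$. For $L_1$, Lemma \ref{zetaiso} provides an explicit isomorphism $\zeta\colon \Gamma(L_1)\xrightarrow{\sim} A$, and $A$ is a $1$-morphism of $\cC_A$ as well.

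Therefore, $\Gamma$ sends the generators $A, L_1$ into $\cC_A$, and by additivity it sends every object of $\add(A\oplus L_1)$ to an object of $\add(A\oplus A\otimes_{\Bbbk}A)$. Since $\Gamma$ is already a strong pseudofunctor into $\cB_A$ by Proposition \ref{Gammapseudo}, and $\cC_A$ is $2$-full in $\cB_A$, the restriction yields the desired pseudofunctor $\cH_A \to \cC_A$. There is no genuine obstacle here beyond the two identifications above; the content of the corollary is simply recording that $\Gamma$ respects the chosen generating $1$-morphisms.
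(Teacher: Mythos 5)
Your proof is correct and follows essentially the same route as the paper: both reduce to computing $\Gamma(A)\cong A\otimes_\Bbbk A$ and $\Gamma(L_1)\cong A$, with the rest following from additivity and $2$-fullness. Your write-up simply spells out the routine details that the paper leaves implicit.
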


\proof
This follows immediately from the fact that $\Gamma(A) \cong A\otimes_\Bbbk A$ and $\Gamma(L_1)\cong A$.
\endproof

\subsection{Finite-dimensional radically graded basic Hopf algebras}\label{radgrbasichopf}
Let $G$ be a finite group. Let  $W=(w_1, \ldots, w_n)$  be a weight sequence of elements in $G$, i.e. a sequence invariant under conjugation up to permutation. Following \cite{GS}, we can associate a quiver $Q=Q_{G,W}$, called the covering quiver, to the pair $(G,W)$ as follows: its vertices are labelled by elements of $G$, i.e. $Q_0=\{ e_g \ \vert \ g \in G\}$ and its arrows are given by
$$Q_1= \{a_{i,g} : e_{g^{\mone}} \to e_{w_i g^{\mone}} \ |\ i=1,\ldots, n,\ g \in G\}.$$
The path algebra $\Bbbk Q$ is said to have an allowable $\Bbbk G$-bimodule structure if it has a $\Bbbk G$-bimodule structure satisfying $g\cdot e_h \cdot g'= e_{g'^{\mone}hg^{\mone}}$, and such that $h \cdot a_{i,g} \cdot h'$ is contained in the $\Bbbk$-linear span of arrows from $e_{h'^{\mone}g^{\mone}h^{\mone}}$ to  $e_{h'^{\mone}w_i g^{\mone}h^{\mone}} $.

Let $A$ be a radically graded basic Hopf algebra. Then, by \cite[Theorem 2.1]{GS} and \cite[Lemma 2.5]{HL}, there exists a pair $(G,W)$ with associated covering quiver $Q$ and allowable $\Bbbk G$-bimodule structure as above, such that $A\cong \Bbbk Q/I$ for an admissible Hopf ideal $I$.
Counit, antipode and comultiplication are then defined on $Q_0$ and $Q_1$ by
$$\begin{array}{rclcrcl}\epsilon(e_g)&=&\begin{cases}1, & \text{for}\ g=1_G \\ 0 & \text{otherwise} \end{cases}
&& \epsilon(a_{i,g})&=&0 \\
S(e_g)&=&e_{g^{\mone}} &&  S(a_{i,g})&=& - w_i g^{\mone}  \cdot a_{i,g}\cdot   g^{\mone}\\
\Delta(e_g)&=&\sum_{h \in G} e_{gh} \otimes e_{h^{\mone}} && \Delta(a_{i,g})&=&\sum_{h \in G} (h \cdot a_{i,g} \otimes e_h + e_h \otimes a_{i,g} \cdot h)
\end{array}$$
and extended linearly and multiplicatively from there. Note that $S(a_{i,g}) = e_{g}S(a_{i,g}) e_{gw_i^{\mone}}$.

For the rest of this section, let $A$ be a radically graded basic Hopf algebra given by the data above.

Note that the left action of $G$ on $A$ induces a right action of $A$ on $\cB_A(\bullet,\bullet)$ (see Section \ref{groupactions}, and that, in particular, we are in the situation of Lemma \ref{actiononproj}, i.e $(Ae_g\otimes e_{g'}A)^h\cong Ae_{gh}\otimes e_{g'h}A)$.


\begin{lemma}\label{Gamma-proj-equiv}
For any $g\in G$, the $A$-$A$-bimodule $\Gamma(Ae_g)$ can be equipped with a $G$-equivariant structure. Moreover, for any morphism $\rho\colon Ae_g \to Ae_{g'}$ of left $A$-modules, $\Gamma(\rho)$ is a $G$-equivariant morphism.
\end{lemma}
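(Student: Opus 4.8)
The plan is to compute $\Gamma(Ae_g)$ explicitly, read off the equivariant structure from how the twist $(-)^k$ permutes its indecomposable projective summands, and then reduce the morphism statement to a single compatibility identity between $\varphi$ and the $G$-action.

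First I would identify $\Gamma(Ae_g)$ as a bimodule. Since $\Gamma=B_{\cdot A}\otimes_A-$ and $Ae_g$ is generated by $e_g$, the canonical isomorphism $B_{\cdot A}\otimes_A A\cong B$ restricts to $\Gamma(Ae_g)\cong B\cdot_\varphi e_g=B\varphi(e_g)$. Using $\Delta(e_g)=\sum_{h}e_{gh}\otimes e_{h^{\mone}}$ and $S(e_{h^{\mone}})=e_h$, one computes $(b\otimes c)\cdot_\varphi e_g=\sum_h be_{gh}\otimes e_hc$, and hence, as an $A$-$A$-bimodule for the outer (left-$B$) structure,
$$\Gamma(Ae_g)\;\cong\;\bigoplus_{h\in G}Ae_{gh}\otimes_\Bbbk e_hA,$$
the summand indexed by $h$ being the indecomposable projective bimodule generated by $e_{gh}\otimes e_h$. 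As a sanity check, summing over $g$ recovers $\Gamma(A)\cong A\otimes_\Bbbk A$.

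Next I would build the equivariant structure. By Lemma \ref{actiononproj}, $(Ae_{gh}\otimes_\Bbbk e_hA)^k\cong Ae_{ghk}\otimes_\Bbbk e_{hk}A$, so twisting by $k$ permutes the summands of $\Gamma(Ae_g)$ via $h\mapsto hk$, and reindexing yields an isomorphism $\alpha_k\colon\Gamma(Ae_g)\xrightarrow{\sim}\Gamma(Ae_g)^k$. Concretely, I would check that $\alpha_k$ is simply the restriction of $k\otimes k\colon A\otimes_\Bbbk A\to A\otimes_\Bbbk A$: as $k$ is an algebra automorphism it is immediate that $k\otimes k$ intertwines the outer bimodule structure with its $k$-twist, and it preserves the subspace $B\varphi(e_g)$ because $(k\otimes k)\varphi(e_g)=\varphi(e_g)$. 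With this description the cocycle condition \eqref{G-equivobj} is transparent, since on underlying spaces $(\alpha_h)^g\circ\alpha_g=(h\otimes h)\circ(g\otimes g)=(hg)\otimes(hg)=\alpha_{hg}$, using $h\circ g=hg$ for the $G$-action and $(M^h)^g=M^{hg}$.

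For the morphism statement I would first observe that $\Gamma(\rho)$ is right $\cdot_\varphi$-multiplication by $y:=\rho(e_g)\in e_gAe_{g'}$: indeed $\Gamma(\rho)(w\otimes ae_g)=w\otimes\rho(ae_g)=w\otimes ay$, which under the identification above is $v\mapsto v\cdot_\varphi y$. Since $(\Gamma(\rho))^k$ is the same underlying linear map as $\Gamma(\rho)$, the naturality square \eqref{G-equivmor} asks exactly that $k\otimes k$ commute with $\cdot_\varphi y$; expanding $(b\otimes c)\cdot_\varphi y=\sum by^{(1)}\otimes S(y^{(2)})c$, the factors $k(b),k(c)$ are common to both sides and the requirement collapses to the single identity $(k\otimes k)\varphi(y)=\varphi(y)$. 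Thus the whole lemma reduces to proving that $\varphi(A)$ lies in the fixed points of $k\otimes k$, i.e. $(k\otimes k)\circ\varphi=\varphi$ for all $k\in G$. I expect this to be the main obstacle, and I would prove it on the algebra generators of $A$. On idempotents it follows from the reindexing $\sum_h e_{ghk^{\mone}}\otimes e_{hk^{\mone}}=\sum_{h'}e_{gh'}\otimes e_{h'}$ already used above. On an arrow one writes $\varphi(a_{i,g})=\sum_h h\cdot a_{i,g}\otimes e_{h^{\mone}}+\sum_h e_h\otimes S(a_{i,g}\cdot h)$; the first sum is $(k\otimes k)$-invariant by the same reindexing together with $k\cdot(h\cdot a_{i,g})=(kh)\cdot a_{i,g}$, while invariance of the second sum comes down to $k\bigl(S(a_{i,g}\cdot h'k)\bigr)=S(a_{i,g}\cdot h')$, which encodes the compatibility of the antipode with the $\Bbbk G$-bimodule structure built into the covering-quiver presentation. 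This bookkeeping with the explicit $\Delta$, $S$ and the $G$-action on arrows is the only genuinely computational part; everything else is formal.
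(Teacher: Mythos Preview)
Your proof is correct and follows essentially the same route as the paper's: compute $\Gamma(Ae_g)\cong\bigoplus_h Ae_{gh}\otimes_\Bbbk e_hA$, read off the equivariant structure from how twisting by $k$ permutes the summands, and reduce the morphism compatibility to the identity $(k\otimes k)\circ\varphi=\varphi$, which is then checked on idempotents and arrows. Your description of $\alpha_k$ as the restriction of $k\otimes k$ is a clean packaging of what the paper obtains more implicitly by relabelling summands via Lemma~\ref{actiononproj}; unwinding that lemma's isomorphism one sees the two $\alpha_k$'s coincide, and the paper's reduction ``$\sum k\cdot a^{(1)}\otimes k\cdot S(a^{(2)})=\sum a^{(1)}\otimes S(a^{(2)})$'' is exactly your $(k\otimes k)\varphi(a)=\varphi(a)$.
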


\begin{proof}
We first construct the isomorphism $\alpha^{\Gamma(Ae_g)}_k\colon \Gamma(Ae_g)\to \Gamma(Ae_g)^k$, writing $\alpha_k$ for simplicity. Computing
$$\varphi(e_g) = (\id\otimes S)\left( \sum_{h\in G} e_{gh} \otimes e_{h^{\mone}}\right) = \sum_{h\in G} e_{gh}\otimes e_{h},$$
we see that 
$\Gamma(Ae_g) =(A\otimes A) \cdot_{\varphi} e_g = \bigoplus_{h\in H}A  e_{gh}\otimes e_{h} A$ and thus
 $$\Gamma(Ae_g)^k = (\bigoplus_{h \in G} Ae_{gh} \otimes e_h A)^k \cong \bigoplus_{h \in G} Ae_{ghk} \otimes e_{hk} A.$$
The obvious isomorphim is thus given by $\alpha_k(e_{gh}\otimes e_h) = e_{gh}\otimes e_h = e_{g \tilde{h}k} \otimes e_{\tilde{h} k}$ for $\tilde{h}=hk^{\mone}$, that is, the identical idempotent, which now lives in the $\tilde{h}$ component of $\Gamma(Ae_g)^k$. Given that $\alpha_k$ is just a relabeling but the underlying map is indeed an identity morphism, it is obvious that the diagram in \eqref{G-equivobj} commutes and this indeed defines a $G$-equivariant structure on $\Gamma(Ae_g)$.

Let now $\Gamma(Ae_g)$ and $\Gamma(Ae_{g'})$, $g,g' \in G$ and let $\rho_a: \Gamma(Ae_g) \to \Gamma(Ae_{g'})$ be defined by $e_g \mapsto e_{g'}a$, where $a=e_g a e_{g'}$.
Therefore, $\Gamma(\rho_a): \bigoplus_{h \in G} Ae_{gh} \otimes e_h A \to \bigoplus_{h' \in G} Ae_{g'h'} \otimes e_{h'} A$ is defined by $e_{gh} \otimes e_{h} \mapsto \sum a^{(1)} \otimes S(a^{(2)})$. We need to show that $\Gamma(\rho_a)$ is a $G$-equivariant morphism (see \eqref{G-equivmor}), i.e. that the diagram 
\begin{equation*}
\xymatrix{
\Gamma(Ae_g) \ar^{\alpha_k}[r]\ar^{\Gamma(\rho_a)}[d]& \Gamma(Ae_g)^k\ar^{\Gamma(\rho_a)^g}[d]\\
\Gamma(Ae_{g'})\ar^{\alpha_k}[r] & \Gamma(Ae_{g'})^k
}
\end{equation*}
commutes.

Again, we identify $\Gamma(Ae_g)^k$ with $$\bigoplus_{h \in G} Ae_{ghk} \otimes e_{hk} A= \bigoplus_{\tilde h \in G} Ae_{g\tilde h} \otimes e_{\tilde h} A$$ for  and $\Gamma(Ae_{g'})^k$ with  $$\bigoplus_{h' \in G} Ae_{g'h'k} \otimes e_{h'k} A = \bigoplus_{\tilde h' \in G} Ae_{g\tilde h'} \otimes e_{\tilde h'} A$$ for $\tilde{h}=hk^{\mone}$ and $\tilde{h}'=h'k^{\mone}$.
We compute the $\tilde{h}'$ components of both compositions applied to the generator of the $h$-component $e_{gh}\otimes e_h$. 

On the one hand, we obtain $$\left(\alpha_k(\Gamma(\rho_a) (e_{gh}\otimes e_h)\right)_{\tilde{h}'}=\sum e_{gh} a^{(1)} e_{g'\tilde{h}'} \otimes e_{\tilde{h}'} S(a^{(2)})e_h.$$

On the other hand, we compute 
\begin{equation*}
\begin{split}
\left(\Gamma(\rho_a)^k(\alpha_k(e_{gh}\otimes e_h))\right)_{\tilde{h}'}
&=\left(\Gamma(\rho_a)^k(e_{g \tilde{h}k} \otimes e_{\tilde{h} k}) \right)_{\tilde{h}'}\\
&= \left(k^{\mone}\cdot(\Gamma(\rho_a)(e_{g \tilde{h}} \otimes e_{\tilde{h}})) \right)_{\tilde{h}'}\\
&=\left(k^{\mone}\cdot( \sum e_{g \tilde{h}} a^{(1)} \otimes S(a^{(2)}) e_{\tilde{h}}) \right)_{\tilde{h}'}\\
&= \sum e_{g h} (k^{\mone}\cdot a^{(1)}) e_{g'\tilde{h}'}\otimes e_{\tilde{h}'} (k^{\mone}\cdot S(a^{(2)})) e_{h}\\
\end{split}
\end{equation*}
where we have used that $k^{\mone}\cdot e_{g \tilde{h}} = e_{g h}$.

It thus suffices to verify that $ \sum  k\cdot a^{(1)} \otimes k\cdot S(a^{(2)}) = \sum a^{(1)} \otimes S(a^{(2)})$ for all $k \in G$, and it suffices to do this for arrows in the quiver of $A$.

 Let $a_{i, g}$ be an arrow. Then
\begin{equation*}
\begin{split}
(\id \otimes S) \Delta(a_{i,g})&=(\id \otimes S) (\sum_{h\in G} (h \cdot a_{i,g} \otimes e_h + e_h \otimes a_{i,g} \cdot h)) \\
&=\sum_{h\in G}(h \cdot a_{i,g}  \otimes e_{h^{\mone}} + e_h \otimes h^{\mone} \cdot S(a_{i,g}) ) \\
& = \sum a^{(1)} \otimes S(a^{(2)}).
 \end{split}
\end{equation*}
where we have used that $S(x\cdot h) = h^{\mone}\cdot S(x)$ (see \cite[Lemma 2.2]{GS}).
On the other hand,
\begin{equation*}
\begin{split}
\sum k \cdot a^{(1)} \otimes k \cdot S(a^{(2)}) 
&= \sum_{h \in G}
(kh \cdot a_{i,g}  \otimes e_{h^{\mone}k^{\mone}} + e_{hk^{\mone}} \otimes kh^{\mone} \cdot S(a_{i,g}))\\
&=\sum_{h' \in G}
h' \cdot a_{i,g}  \otimes e_{h'^{\mone}} + \sum_{h'' \in G}e_{h''} \otimes h''^{\mone} \cdot S(a_{i,g})\\
&= \sum a^{(1)} \otimes S(a^{(2)}), 
\end{split}
\end{equation*}
where we have changed the summation to $h'=kh$ and $h'' = hk^{\mone}$.
This proves our claim.
\end{proof}
 
 \begin{proposition}\label{Gamma-all-equiv}
 For all $M \in \cR ep_A(\bullet, \bullet)$, $\Gamma(M)$ carries a $G$-equivariant structure in $\cB_A(\bullet,\bullet)$. Moreover for any $f\colon M\to N$ in $\cR ep_A(\bullet,\bullet)$, the morphism $\Gamma(f)$ is $G$-equivariant.
 \end{proposition}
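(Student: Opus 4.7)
The plan is to extend Lemma \ref{Gamma-proj-equiv} from indecomposable projectives to arbitrary modules via descent along projective presentations. As a preliminary observation, for any $k \in G$ the twist functor $(-)^k$ on $\cB_A(\bullet,\bullet)$ is exact (in fact an automorphism of the category), since $k$ acts on $A \otimes_\Bbbk A^{\op}$ by an algebra automorphism and the twist merely relabels this action. Consequently $(-)^k$ preserves cokernels, so the full subcategory of $G$-equivariant $A$-$A$-bimodules is closed under cokernels of equivariant morphisms.

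Given $M \in \cR ep_A(\bullet,\bullet)$, since $A$ is basic I would choose a finite projective presentation $P_1 \xrightarrow{f_M} P_0 \twoheadrightarrow M$ with each $P_i$ a direct sum of modules of the form $Ae_g$. Applying the exact functor $\Gamma$ (exactness follows from Lemma \ref{AbasisofB}\eqref{AbasisofB1}) yields the exact sequence
$$\Gamma(P_1) \xrightarrow{\Gamma(f_M)} \Gamma(P_0) \twoheadrightarrow \Gamma(M).$$
By Lemma \ref{Gamma-proj-equiv} applied summand-wise, each $\Gamma(P_i)$ carries a canonical $G$-equivariant structure $\{\alpha_k^{\Gamma(P_i)}\}_{k\in G}$, and $\Gamma(f_M)$ is $G$-equivariant as a matrix of $\Gamma$-images of maps between indecomposable projectives. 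The composite $\Gamma(P_0) \xrightarrow{\alpha_k^{\Gamma(P_0)}} \Gamma(P_0)^k \twoheadrightarrow \Gamma(M)^k$ then annihilates the image of $\Gamma(f_M)$ (by equivariance of $\Gamma(f_M)$ together with exactness of $(-)^k$), so factors uniquely through $\Gamma(P_0) \twoheadrightarrow \Gamma(M)$ to produce $\alpha_k^{\Gamma(M)}\colon \Gamma(M) \to \Gamma(M)^k$; the five lemma applied to the induced morphism of exact sequences shows this is an isomorphism. The cocycle condition \eqref{G-equivobj} for $\Gamma(M)$ then follows from the corresponding diagram for $\Gamma(P_0)$ by precomposing with the epimorphism $\Gamma(P_0) \twoheadrightarrow \Gamma(M)$.

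For a morphism $f \colon M \to N$, having fixed presentations of $M$ and $N$, I would lift $f$ projectively to a chain map $(f_1, f_0)$. By Lemma \ref{Gamma-proj-equiv} the top map $\Gamma(f_0)$ is $G$-equivariant with respect to the canonical structures on $\Gamma(P_0)$ and $\Gamma(Q_0)$, so the equivariance square \eqref{G-equivmor} for $\Gamma(f)$ commutes after precomposition with the epimorphism $\Gamma(P_0) \twoheadrightarrow \Gamma(M)$; epi-cancellation then yields commutativity of the square itself, so $\Gamma(f)$ is $G$-equivariant. The only real subtlety is that the structure on $\Gamma(M)$ a priori depends on the choice of presentation; this is immaterial for the stated existence result, and any two such structures differ by an equivariant isomorphism obtained by lifting the identity of $M$ to a chain map between the two presentations.
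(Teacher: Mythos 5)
Your proposal is correct and follows essentially the same route as the paper: apply the exact functor $\Gamma$ to a projective presentation, equip the projective terms with the equivariant structures from Lemma \ref{Gamma-proj-equiv}, and descend to the cokernel, with equivariance of $\Gamma(f)$ obtained by lifting $f$ to the presentations. The extra details you supply (exactness of the twist $(-)^k$, independence of the choice of presentation) are sound and only make explicit what the paper leaves implicit.
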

 \begin{proof}
Let $M \in \cR ep_A(\bullet, \bullet)$ and let
$$ \bigoplus_{i=1}^s Ae_{h_{i}} \xrightarrow{(\rho_{ij})} \bigoplus_{j=1}^t Ae_{g_{j}} \twoheadrightarrow M$$
be a projective presentation for $M$.
Since $\Gamma$ is exact,
$$ \Gamma(\bigoplus_{i=1}^s Ae_{h_{i}}) \xrightarrow{\Gamma(\rho_{ij})} \Gamma(\bigoplus_{j=1}^t Ae_{g_{j}}) \twoheadrightarrow \Gamma(M)$$
is a projective presentation for $\Gamma(M)$.
Defining $\alpha_k^{\oplus \Gamma(Ae_{g_{j}})}: \bigoplus_{j=1}^t \Gamma(Ae_{g_{j}}) \to \bigoplus_{j=1}^t \Gamma(Ae_{g_{j}})^k$ by applying $\alpha_k$ to each component, we obtain the commutative diagram
$$
\xymatrix{
\bigoplus_{i=1}^s \Gamma(Ae_{h_{i}})  \ar^{(\Gamma(\rho_{ij}))}[r]\ar^{\alpha_k^{\oplus\Gamma(Ae_{h_{i}})}}[d]& \bigoplus_{j=1}^t \Gamma(Ae_{g_{j}}) \ar^{\alpha_k^{\oplus\Gamma(Ae_{g_{j}})}}[d] \ar@{->>}[r] & \Gamma(M) \ar@{.>}[d]\\
\bigoplus_{i=1}^s \Gamma(Ae_{h_{i}}) ^k \ar^{\Gamma(\rho_{ij})^k}[r] & \bigoplus_{j=1}^t \Gamma(Ae_{g_{j}})^k \ar@{->>}[r] &\Gamma(M)^k,
}$$

where the solid vertical arrows are isomorphisms, implying that the induced morphism on the cokernels, which we define to be $\alpha_k^M$, is also an isomorphism as needed. The fact that these isomorphisms $\alpha_k^M$, for $k\in G$, make the diagram \eqref{G-equivobj} commute, follows from the same fact for the $\Gamma(Ae_g)$. Moreover, for any $f\colon M\to N$ in $\cR ep_A(\bullet,\bullet)$, one checks that $\Gamma(f)$ is $G$-equivariant by lifting $f$ to a projective presentation and applying Lemma \ref{Gamma-proj-equiv}.
 \end{proof}

\section{Symmetric bimodules and their simple transitive birepresentations}\label{s3}

\subsection{Symmetric bimodules}\label{s3.1}
Let now again $A$ be any finite-dimensional algebra and assume $G$ is a finite subgroup of the automorphism group of $A$ as in Section \ref{groupactions}.

Recall the category $\cB_A(\bullet,\bullet)$ of $A$-$A$-bimodules. 
Let $\mathcal{X}_A$ be the category whose objects are those of  $\cB_A(\bullet,\bullet)$, but in which morphism spaces between
objects $M$ and $N$ are given by
\begin{displaymath}
\mathrm{Hom}_{\mathcal{X}_A}(M,N):=
\bigoplus_{g\in G}\mathrm{Hom}_{A\text{-}A}(M,N^{g}).
\end{displaymath}
Thus, any $\varphi \in \mathrm{Hom}_{\mathcal{X}_A}(M,N)$ is given by a tuple
$(\varphi_{g})_{g\in G}$ such that $\varphi_{g}\in\mathrm{Hom}_{A\text{-}A}(M,N^{g})$.
Composition of $\varphi\in \mathrm{Hom}_{\mathcal{X}_A}(M,N)$ and $\psi\in  \mathrm{Hom}_{\mathcal{X}_A}(N,K)$
is defined by
\begin{equation}\label{eq1}
\begin{array}{ccc}
\mathrm{Hom}_{\mathcal{X}_A}(N,K)\otimes\mathrm{Hom}_{\mathcal{X}_A}(M,N)
&\to& \mathrm{Hom}_{\mathcal{X}_A}(M,K)\\
(\psi_{h})_{h\in G}\otimes (\varphi_{g})_{g\in G}&\mapsto& \big(\displaystyle\sum_{g\in G}(\psi_{sg^{\mone}})^{g}\circ \varphi_{g}\big)_{s\in G}.
\end{array}
\end{equation}
See \cite{CM}, where this is defined in Definition 2.3 and called a skew-catgory, for more details.

Denote by $\tilde{\mathcal{X}_A}$ the idempotent completion of $\mathcal{X}_A$, that is, objects of $\tilde{{\mathcal{X}_A}}$ are pairs $(M,e)$ where $M\in \X_A$ and $e=e^2\in\mathrm{End}_{\mathcal{X}_A}(M)$. For any $A$-$A$-bimodule $M$, we denote its associated stabiliser subgroup by
\begin{displaymath}
G_M:=\{g\in G\,\vert\, M\cong M^{g}\}.
\end{displaymath}


The following lemma is proved in exactly the same way as in the case of abelian $G$, see \cite[Lemma 2(i)]{MMZ2}.

\begin{lemma}\label{lem0}
{\hspace{1mm}}
For indecomposable $M\in \X_A$, there is an isomorphism of algebras
\begin{displaymath}
\mathrm{End}_{\mathcal{X}_A}(M)/\mathrm{Rad}(\mathrm{End}_{\mathcal{X}_A}(M))\cong
\Bbbk[{G}_M]/\mathrm{Rad}(\Bbbk[G_M])\cong\Bbbk[{G}_M].
\end{displaymath}
\end{lemma}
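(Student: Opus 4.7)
The plan is to identify the Jacobson radical of $E := \End_{\X_A}(M)$ explicitly and compute the semisimple quotient, matching it with $\Bbbk[G_M]$.

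First, by the very definition of $\X_A$, one has $E = \bigoplus_{g\in G}\mathrm{Hom}_{A\text{-}A}(M,M^g)$, and the composition rule \eqref{eq1} makes this a $G$-graded algebra: an element in the $g$-th component composed (in the sense of $\X_A$) with one in the $h$-th component lies in the $(hg)$-th component. For each $g\in G_M$, I would fix an $A$-$A$-bimodule isomorphism $\alpha_g \colon M \xrightarrow{\sim} M^g$ with $\alpha_{1_G}=\id_M$. Since $M$ is indecomposable as an $A$-$A$-bimodule, $\End_{A\text{-}A}(M)$ is local with residue field $\Bbbk$, and post-composition with $\alpha_g$ gives an isomorphism $\mathrm{Hom}_{A\text{-}A}(M,M^g) \cong \alpha_g\cdot\End_{A\text{-}A}(M)$.

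Next, introduce $J := \bigoplus_{g\in G_M}\alpha_g\cdot\mathrm{Rad}(\End_{A\text{-}A}(M)) \oplus \bigoplus_{g\notin G_M}\mathrm{Hom}_{A\text{-}A}(M,M^g)$. I would first verify that $J$ is a two-sided ideal of $E$. The key point is that any $A$-$A$-bimodule morphism $M \to M^g$ with $g \notin G_M$ is a morphism between non-isomorphic indecomposables and hence cannot split, so any composition in $\X_A$ involving it lands as a non-isomorphism in the relevant graded component, therefore in $J$; for the stabiliser components, one uses that $\mathrm{Rad}(\End_{A\text{-}A}(M))$ is two-sided in $\End_{A\text{-}A}(M)$ and is preserved by the twisting operation $s\mapsto s^g$ combined with conjugation by the chosen $\alpha_g$.

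The quotient $E/J$ is then spanned by $\{\bar\alpha_g : g \in G_M\}$ and carries multiplication $\bar\alpha_h\cdot\bar\alpha_g = c(h,g)\bar\alpha_{hg}$, where $c(h,g)\in \Bbbk^\times$ is determined by uniquely writing $\alpha_h^g \circ \alpha_g = \alpha_{hg}\cdot r_{h,g}$ with $r_{h,g}\in\End_{A\text{-}A}(M)^\times$ (a unit because the left-hand side is an isomorphism) and reducing $r_{h,g}$ modulo $\mathrm{Rad}(\End_{A\text{-}A}(M))$. This exhibits $E/J$ as a twisted group algebra $\Bbbk^c[G_M]$, which is semisimple because $\mathrm{char}(\Bbbk)\nmid|G_M|$; combined with the fact that $J$ consists entirely of non-units, this gives $J = \mathrm{Rad}(E)$. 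The final step is to identify $\Bbbk^c[G_M]$ with $\Bbbk[G_M]$ by rescaling each $\alpha_g$ by a suitable unit of $\End_{A\text{-}A}(M)$, so as to trivialize the class of $c$; this is carried out exactly as in the proof of \cite[Lemma 2(i)]{MMZ2}. The main obstacle I anticipate is precisely this cocycle-trivialization in the non-abelian setting, since a priori $c$ could represent a non-trivial class in $H^2(G_M,\Bbbk^\times)$; however, the available freedom to rescale the $\alpha_g$ by units in the local algebra $\End_{A\text{-}A}(M)$ suffices to kill $c$, and everything else in the argument is routine bookkeeping about $G$-graded algebras with local components.
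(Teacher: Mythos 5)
Your identification of the $G$-grading on $E:=\End_{\X_A}(M)$ coming from \eqref{eq1}, of the candidate ideal $J$, and of the quotient $E/J$ as a twisted group algebra $\Bbbk^c[G_M]$ is exactly the skeleton of the argument the paper is invoking (the paper itself gives no proof beyond citing \cite[Lemma 2(i)]{MMZ2}), and up to that point your reasoning is sound. One small repair: to conclude $J=\mathrm{Rad}(E)$ it is not enough that $E/J$ is semisimple and that $J$ consists of non-units (compare $\Bbbk\times 0\subset\Bbbk\times\Bbbk$); you should instead observe that $J$ is nilpotent, because every summand of $J$ consists of radical morphisms of the category of $A$-$A$-bimodules between the finitely many indecomposables $M^g$, and that radical is nilpotent.

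The genuine gap is the final step. Rescaling each $\alpha_g$ by a unit of the local ring $\End_{A\text{-}A}(M)$ changes $c$ precisely by a coboundary, since the unit enters only through its image in $\Bbbk^\times=\End_{A\text{-}A}(M)/\mathrm{Rad}(\End_{A\text{-}A}(M))$; so this freedom normalises $c$ within its class in $H^2(G_M,\Bbbk^\times)$ but can never kill a non-trivial class. For cyclic $G_M$ one has $H^2(G_M,\Bbbk^\times)=0$ over an algebraically closed field and there is nothing to prove, but in general $H^2(G_M,\Bbbk^\times)$ is the Schur multiplier, and the class $[c]$ is exactly the obstruction to $M$ admitting a $G_M$-equivariant structure -- it does not vanish for formal reasons. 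What your argument actually establishes is $\End_{\X_A}(M)/\mathrm{Rad}(\End_{\X_A}(M))\cong\Bbbk^c[G_M]$; to land on the untwisted group algebra you must additionally argue that $[c]$ is trivial for the module at hand, for instance by exhibiting a genuine $G_M$-equivariant structure. This is available in all the situations where the paper uses the lemma: for $M=A$ the maps $\alpha_g=g(\cdot)$ satisfy $\alpha_h^g\circ\alpha_g=\alpha_{hg}$ on the nose, the projective bimodules are handled by the explicit equivariant structures of Lemma \ref{Gamma-proj-equiv}, and in the regular-action setting of Lemma \ref{projBaction} the stabilisers are trivial. But as written, ``the available freedom to rescale the $\alpha_g$ suffices to kill $c$'' is an assertion rather than a proof, and the proposed mechanism cannot work in general.
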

%

%

For any $M$, the group algebra $\Bbbk[G_M]$ is semi-simple and admits a unique decomposition
into a product of matrix rings. Let $\Bbbk[G_M]=S_1^M\oplus\cdots \oplus S_{s_M}^M$ be a decomposition into a direct sum of simple modules, and let $\{\tilde\eps_1^M,\dots \tilde\eps_{s_M}^M\}$ be the corresponding set of
primitive idempotents in $\Bbbk[G_M]$. Assume that $S_1^M$ is the trivial module. Each $\tilde\eps^M_i$ has the form
{\small$\displaystyle\frac{1}{|G_M|}\sum_{g\in G_M}\lambda^M_i(g)g$} for some scalars $\lambda^M_i(g)$ and hence defines an idempotent
${\eps}_i$ in $\mathrm{End}_{\mathcal{X}_A}(M)$ given by
the tuple {\small $\left(\frac{\lambda_i^M(g)}{|G_M|}g\right)_{g\in G_M}$}.
In the special case of $M=A$, we omit the sub- and superscripts $M$, set $s=s_A$ and also write $\tilde\pi_i=\tilde\eps_i^A$ and $\pi_i=\eps_i^A$.

It immediately follows from the definitions that the indecomposable objects of $\tilde{{\mathcal{X}_A}}$ are of the form
$(M,\varepsilon_{j}^M)$, where
$M$ is indecomposable as an $A$-$A$-bimodule and $j=1, \dots, s_M$. Moreover, 
$(M,\varepsilon_{i}^M)$ and $(M,\varepsilon_{j}^M)$ are isomorphic if and only if $S_i^M\cong S_j^M$.


%
%
%

In order to arrive at a bicategory whose $1$-morphisms are the objects of $\tilde \X_A$, we equip $\mathcal{X}_A$ with a tensor product by setting
\begin{itemize}
\item $\displaystyle M\otimes^G N=\bigoplus_{g\in G}
\big(M^{g^{}}\otimes_A N\big)$, for any $M, N\in \X_A$, and
\item $\varphi\otimes^G \psi=\big( (\varphi_{g k^{\mone}})^k\otimes \psi_{h}\big)_{g,h,k\in G}$,
where
\begin{displaymath}
(\varphi_{g k^{\mone}})^{k}\otimes \psi_{h}:
M^{k^{}}\otimes_A N\to (M')^{g}
\otimes_A (N')^{h},
\end{displaymath}
for $M, M',N,N' \in \X_A$ and 
$
\varphi\in\mathrm{Hom}_{\mathcal{X}_A}(M,M'),
\psi\in\mathrm{Hom}_{\mathcal{X}_A}(N,N').
$
\end{itemize}

Since $(M'^{g h^{\mone}}\otimes_A N')^{h}\cong (M')^{g}\otimes_A (N')^{h}$, we observe that $(\varphi_{g k^{\mone}})^{k}\otimes \psi_{h}$ is a component of $(\varphi\otimes^G \psi)_{h}$.
The following lemma is the analogue of \cite[Lemma 3]{MMZ2} and shows that the asymmetry in the defintion of $-\otimes^G-$ is only notational. For the reader's convenience, we include the proof.

\begin{lemma}\label{lem3}
There is an isomorphism
\begin{displaymath}
\bigoplus_{g\in G}
\big(M^{g^{}}\otimes_A N\big) \cong
\bigoplus_{g\in G}
\big(M\otimes_A N^{g^{\mone}}\big)
\end{displaymath}
in $\X_A$.
\end{lemma}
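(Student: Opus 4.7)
My plan is to establish the isomorphism summand by summand, indexed by $g \in G$, and then take the direct sum.

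The crucial auxiliary observation I would first verify is that for any $A$-$A$-bimodule $X$ and any $g \in G$, there is an isomorphism $X \cong X^g$ \emph{in $\X_A$} (even though this generally fails in $\cB_A$). Indeed,
$$\Hom_{\X_A}(X, X^g) = \bigoplus_{h \in G} \Hom_{A\text{-}A}(X, (X^g)^h),$$
and the summand at $h = g^{\mone}$ equals $\Hom_{A\text{-}A}(X, X)$ since $(X^g)^{g^{\mone}} = X^{g g^{\mone}} = X$. Thus $\id_X$ defines a morphism $\phi \in \Hom_{\X_A}(X, X^g)$ supported in the $g^{\mone}$-component, and symmetrically $\id_{X^g} \in \Hom_{A\text{-}A}(X^g, (X)^g)$ defines $\psi \in \Hom_{\X_A}(X^g, X)$ supported in the $g$-component. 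A direct application of the composition formula \eqref{eq1}: in the computation of $(\psi \circ \phi)_s = \sum_{k} (\psi_{s k^{\mone}})^{k} \circ \phi_{k}$, only $k = g^{\mone}$ survives on account of $\phi$, forcing $s = 1$ through the nonvanishing of $\psi_{sg}$, and the resulting term is $\id_X^{g^{\mone}} \circ \id_X = \id_X$. The opposite composition is handled symmetrically, yielding $X \cong X^g$ in $\X_A$.

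Applying this observation with $X := M \otimes_A N^{g^{\mone}}$ gives $M \otimes_A N^{g^{\mone}} \cong (M \otimes_A N^{g^{\mone}})^g$ in $\X_A$. By Lemma \ref{canoniso},
$$(M \otimes_A N^{g^{\mone}})^g \cong M^g \otimes_A (N^{g^{\mone}})^g = M^g \otimes_A N$$
as $A$-$A$-bimodules, and hence also in $\X_A$. Composing these, we obtain $M \otimes_A N^{g^{\mone}} \cong M^g \otimes_A N$ in $\X_A$ for each $g \in G$, and taking the direct sum over $g$ produces the desired isomorphism (noting that biproducts in $\X_A$ agree with those in $\cB_A$, since $\X_A$ has the same objects and its morphism spaces contain the canonical inclusions and projections).

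The only real obstacle is purely notational: the verification that $\phi$ and $\psi$ are mutually inverse requires careful bookkeeping with the group indices in \eqref{eq1}. Once the auxiliary fact $X \cong X^g$ in $\X_A$ is in place, the remainder of the argument is immediate from Lemma \ref{canoniso}.
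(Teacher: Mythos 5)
Your proof is correct and follows essentially the same route as the paper: both rest on Lemma \ref{canoniso} together with the observation that a bimodule isomorphism $X\to Y^{h}$ placed in the $h$-component of $\Hom_{\X_A}(X,Y)$ yields an isomorphism in $\X_A$. The paper simply writes down the resulting one-component tuple directly, whereas you factor it through the (correctly verified) auxiliary isomorphism $X\cong X^{g}$ in $\X_A$; the net morphism is the same.
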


\begin{proof}
By, Lemma \ref{canoniso}, there is a canonical isomorphism
\begin{equation}\label{eq2}
M\otimes_A N^{g^{\mone}}\cong
\left(M^{g}\otimes_AN\right)^{g^{\mone}}
\end{equation}
in $\cB_A(\bullet, \bullet)$. Thus, 
\begin{displaymath}
\xymatrix{M\otimes_A N^{g^{\mone}}\ar[rr]^{(\varphi_{h})_{h\in G}}&&
M^{g}\otimes_A N},
\end{displaymath}
with $\varphi_{g^{\mone}}$ given by \eqref{eq2} and the remaining components chosen as zero, defines the required isomorphism in $\X_A$.
\end{proof}


The following lemma is the non-abelian version of \cite[Lemma 5]{MMZ2} and proved analogously.

\begin{lemma}\label{lem2}
{\hspace{1mm}}

\begin{enumerate}[$($i$)$]
\item\label{lem2.1}
The operation $- \otimes^G -$ is bifunctorial.
\item\label{lem2.2}
If $e$ and $f$ are idempotents in $\mathcal{X}_A$, then so is $e\otimes^G f$.
Hence $- \otimes^G -$ extends to a bifunctor
$- \otimes^G -:\tilde{\mathcal{X}_A}\times \tilde{\mathcal{X}_A}\to \tilde{\mathcal{X}_A}$
given by $(M,e)\otimes^G(N,f)=(M\otimes^GN,e\otimes^G f)$.
\end{enumerate}
\end{lemma}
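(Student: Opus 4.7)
The plan is to verify (i) directly by checking that $-\otimes^G-$ preserves identities and respects composition, and then to deduce (ii) formally from (i). The argument follows the template of the abelian case in \cite[Lemma 5]{MMZ2}, the only additional subtlety being that $G$ is not assumed abelian, so one must be careful with the order of group elements appearing as twist-exponents.

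For the identity axiom, $\id_M$ in $\mathcal{X}_A$ is the tuple with $(\id_M)_{1_G}=\id_M$ and $(\id_M)_g=0$ for $g\ne 1_G$. Substituting into the formula for $-\otimes^G-$, the only potentially non-zero components of $\id_M\otimes^G\id_N$ are those with $h=1_G$ and $gk^{\mone}=1_G$, and each such component is the identity on the summand $M^k\otimes_A N$; hence $\id_M\otimes^G\id_N=\id_{M\otimes^G N}$.

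For composition, fix morphisms $\varphi\colon M\to M'$, $\varphi'\colon M'\to M''$ and $\psi\colon N\to N'$, $\psi'\colon N'\to N''$ in $\mathcal{X}_A$. Both sides of the equation
\begin{equation*}
(\varphi'\circ\varphi)\otimes^G(\psi'\circ\psi) \;=\; (\varphi'\otimes^G\psi')\circ(\varphi\otimes^G\psi)
\end{equation*}
are expanded component-by-component using the composition rule \eqref{eq1} inside $-\otimes^G-$ and the defining formula of the tensor product. After a change of summation variable in the resulting double sum over $G$, the two expressions match provided that (a) the translation $(-)^k$ is a functor on $\cB_A(\bullet,\bullet)$, so that it distributes over composition of bimodule morphisms, and (b) the canonical isomorphism of Lemma \ref{canoniso} identifying $(M^g\otimes_A N)^k$ with $M^{gk}\otimes_A N^k$ is natural in both variables. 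Both (a) and (b) are straightforward from the definitions in Section \ref{groupactions}.

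Granting (i), the second statement is formal: for idempotents $e$ and $f$, bifunctoriality gives
\begin{equation*}
(e\otimes^G f)\circ(e\otimes^G f) \;=\; (e\circ e)\otimes^G(f\circ f) \;=\; e\otimes^G f,
\end{equation*}
so $e\otimes^G f$ is idempotent, and $-\otimes^G-$ extends to $\tilde{\mathcal{X}_A}$ in the standard way by declaring $(M,e)\otimes^G(N,f)=(M\otimes^G N,\,e\otimes^G f)$ on objects and restricting the action on morphisms to the appropriate idempotent summands. The main obstacle throughout is bookkeeping: one must be consistent about how the group indices and the bimodule-twist exponents reindex when passing superscripts past tensor products and compositions, which is precisely where the non-abelian nature of $G$ demands more care than in \cite{MMZ2}.
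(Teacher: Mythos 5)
Your proof is correct and is essentially the argument the paper intends: the paper gives no written proof, stating only that the lemma ``is the non-abelian version of [MMZ2, Lemma 5] and proved analogously,'' and your component-wise verification (identity axiom, compatibility with composition via a change of summation variable together with functoriality of $(-)^k$ and naturality of the isomorphisms from Lemma \ref{canoniso}, then the formal extension to the idempotent completion) is precisely that analogous argument, with the group-element bookkeeping handled correctly.
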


We next obtain results akin to \cite[Propositions 6 and 7]{MMZ2}. We only consider $(A,{\pi}_{1})$ on the one hand, but generalise to any bimodule $M$ with any idempotent on the other hand. The proofs are similar to those in loc. cit. but since our setup, notation and level of generality are different, we include them for the reader's convenience.

\begin{proposition}\label{propmultid2}
$M\in \X_A$ and $l \in \{1,\dots, s_M\}$. Then
\begin{equation}\label{eq8}
(M,\varepsilon_{l}^M)\otimes^G(A,{\pi}_{1})\cong (M,\varepsilon_{l}^M) \cong (A,{\pi}_{1})\otimes^G(M,\varepsilon_{l}^M).
\end{equation}
\end{proposition}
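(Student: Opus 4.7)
The strategy is to construct morphisms in $\X_A$ that descend to mutually inverse isomorphisms in the idempotent completion $\tilde{\X}_A$. By Lemma \ref{lem3}, the two isomorphisms in \eqref{eq8} are related by symmetry, so it suffices to treat the right-hand version $(M,\varepsilon_l^M)\otimes^G(A,\pi_1)\cong (M,\varepsilon_l^M)$. Unpacking, the left-hand side equals $(M\otimes^G A,\,\varepsilon_l^M\otimes^G\pi_1)$, with $M\otimes^G A=\bigoplus_{g\in G} M^g\otimes_A A$.

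I would define two natural morphisms in $\X_A$: namely $\alpha\colon M\to M\otimes^G A$ whose $e$-component is the canonical inclusion $m\mapsto m\otimes 1$ into the summand $M^e\otimes_A A$ (other components zero), and $\beta\colon M\otimes^G A\to M$ whose $s$-component sends $m\otimes a\in M^s\otimes_A A$ to $ma\in M^s$ and vanishes on the remaining summands. Setting $\tilde{\alpha}:=(\varepsilon_l^M\otimes^G\pi_1)\circ\alpha$ and $\tilde{\beta}:=\beta\circ(\varepsilon_l^M\otimes^G\pi_1)$ produces morphisms between $(M,\varepsilon_l^M)$ and $(M\otimes^G A,\,\varepsilon_l^M\otimes^G\pi_1)$ in $\tilde{\X}_A$. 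Using that $\varepsilon_l^M\otimes^G\pi_1$ is an idempotent (Lemma \ref{lem2}(ii)), the crucial verification reduces to the identity
\begin{displaymath}
\beta \circ (\varepsilon_l^M\otimes^G\pi_1) \circ \alpha \;=\; \varepsilon_l^M \qquad \text{in } \X_A.
\end{displaymath}
Expanding both the composition formula \eqref{eq1} and the definition of $\otimes^G$, the $t$-component ($t\in G_M$) applied to $m\in M$ evaluates to a sum over an intermediate index $s\in G$. Only the $j=ts^{-1}$ target summand of $(M\otimes^G A)^s$ survives the vanishing built into $\beta$, and each such $s$ contributes $\tfrac{\lambda_l^M(t)}{|G|\,|G_M|}\,\theta_t^M(m)$; summing over the $|G|$ values of $s$ cancels the $\tfrac{1}{|G|}$ in $\pi_1$ and yields $\tfrac{\lambda_l^M(t)}{|G_M|}\theta_t^M(m)=(\varepsilon_l^M)_t(m)$, with both sides zero for $t\notin G_M$.

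From this identity, the idempotency of $\varepsilon_l^M\otimes^G\pi_1$ immediately gives $\tilde{\beta}\tilde{\alpha}=\varepsilon_l^M$, which is the identity morphism on $(M,\varepsilon_l^M)$ in $\tilde{\X}_A$. The reverse composition $\tilde{\alpha}\tilde{\beta}$ is automatically an idempotent on $(M\otimes^G A,\,\varepsilon_l^M\otimes^G\pi_1)$; to conclude $\tilde{\alpha}\tilde{\beta}=\varepsilon_l^M\otimes^G\pi_1$ (the identity on the object in $\tilde{\X}_A$), I would argue by indecomposability: an analogue of Lemma \ref{lem0} applied to $(M\otimes^G A,\,\varepsilon_l^M\otimes^G\pi_1)$, combined with the fact that $\tilde{\alpha}\tilde{\beta}$ is a nonzero idempotent (since $\tilde{\beta}\tilde{\alpha}\neq 0$), forces $\tilde{\alpha}\tilde{\beta}$ to coincide with the full idempotent. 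The main obstacle is the fiddly combinatorial bookkeeping in the central calculation: one must carefully track the interplay between the tuple indices in $\X_A$, the $G$-twists in the decomposition of $(M\otimes^G A)^s$ coming from Lemma \ref{canoniso}, and the normalization factors $\tfrac{1}{|G_M|}$, $\tfrac{1}{|G|}$, $|G|$ so that they combine to the expected answer.
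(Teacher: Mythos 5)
Your construction of the maps and your central computation are essentially the paper's: your $\tilde\alpha$ and $\tilde\beta$ are, once written out, exactly the morphisms $\varphi$ and $\psi$ of the paper (the components $\frac{1}{|G_M||G|}\lambda_l^M(s)(s(m)\otimes 1)$ and $\frac{1}{|G_M|}\lambda_l^M(hg^{\mone})hg^{\mone}(m)$), and the identity $\beta\circ(\varepsilon_l^M\otimes^G\pi_1)\circ\alpha=\varepsilon_l^M$ is precisely the paper's computation that the $k$-component of $\psi\circ\varphi$ sends $m$ to $\frac{1}{|G_M|}\lambda_l^M(k)k(m)$. (Minor point: to have genuine morphisms in $\tilde{\X}_A$ out of and into $(M,\varepsilon_l^M)$ you should also pre-, resp.\ post-, compose with $\varepsilon_l^M$, or verify as the paper does that $\alpha$ and $\beta$ intertwine the idempotents; this costs nothing since $\varepsilon_l^M$ is idempotent.)

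The genuine gap is in your treatment of the reverse composite. You want to conclude $\tilde\alpha\tilde\beta=\varepsilon_l^M\otimes^G\pi_1$ from the fact that it is a nonzero idempotent on an indecomposable object, invoking ``an analogue of Lemma \ref{lem0} applied to $(M\otimes^GA,\varepsilon_l^M\otimes^G\pi_1)$''. First, the proposition is stated for arbitrary $M\in\X_A$, and for decomposable $M$ neither $(M,\varepsilon_l^M)$ nor $(M\otimes^GA,\varepsilon_l^M\otimes^G\pi_1)$ need be indecomposable, so the argument does not even start; a reduction to indecomposable $M$ would itself require relating the idempotents $\varepsilon_l^M$ to those of the bimodule summands of $M$, which you do not address. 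Second, even for indecomposable $M$, Lemma \ref{lem0} applies only to bimodules that are indecomposable as bimodules, whereas $M\otimes^GA\cong\bigoplus_{g\in G}M^g$ is decomposable; a priori $(M\otimes^GA,\varepsilon_l^M\otimes^G\pi_1)$ is only a direct summand of a multiple of $(M,\varepsilon_l^M)$, and showing its indecomposability (equivalently, that the complement of the image of $\tilde\alpha\tilde\beta$ vanishes) is essentially the content of the statement you are trying to prove. You would need either a rank/dimension count of the idempotent $\varepsilon_l^M\otimes^G\pi_1$ or, as the paper does, a direct computation showing that the $g,s,t$-component of $\varphi\circ\psi$ sends $m\otimes 1$ to $\frac{1}{|G_M||G|}\lambda_l^M(sg^{\mone})(sg^{\mone}(m)\otimes 1)$, i.e.\ equals the idempotent itself. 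That second explicit computation is no harder than the one you already carried out, and it is what actually closes the argument.
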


\begin{proof}
We start by constructing a morphism $\varphi$ from the right hand side of \eqref{eq8} to the left hand side.
Consider the morphism
\begin{displaymath}
\varphi :=(\varphi_{s,t})_{s,t\in G}\colon M\to
\displaystyle\bigoplus_{s,t\in G}M^{s}\otimes_A A^{t}
\end{displaymath}
where $\varphi_{s,t}$ is given by
\begin{equation*}
m\mapsto \frac{1}{|G_M||G|}\lambda_l^M(s)(s(m)\otimes 1)\in
M^{s}\otimes_A A^{t}.
\end{equation*}
if $s\in G_M$ and zero otherwise.

Consider the diagram
\begin{displaymath}
\xymatrix{
M\ar[rrr]^{\varepsilon_{l}^M}\ar[d]_{(\varphi_{s,t})_{s,t\in G}}
&&&\displaystyle\bigoplus_{h\in G}M^{h}
\ar[d]^{((\varphi_{gh^{\mone},kh^{\mone}})_{gh^{\mone},kh^{\mone}\in G}^{h})_{h\in G}}\\
\displaystyle\bigoplus_{s,t\in G}M^{s}\otimes_A A^{t}
\ar[rrr]^{(({\varepsilon}_{l}^M\otimes^G{\pi}_{1})^t)_{t\in G}}
&&&\displaystyle
\bigoplus_{g,k\in G}
M^g \otimes_A A^k.
}
\end{displaymath}
By definition,  the $st^{\mone},gt^{\mone},kt^{\mone}$-component of
$\varepsilon_{l}^M\otimes^G{\pi}_{1}$
sends $m\otimes 1$ to
$$
\frac{1}{|G_M||G|}\lambda_l^M(gt^{\mone})
(gt^{\mone}(m)\otimes 1)
$$
if $gt^{\mone}\in G_M$ and zero otherwise.
Now, going to the right and then down, the $g,k$-component of the composition
$\varphi\circ \varepsilon_{l}^M$
sends $m\in M$ to
\begin{displaymath}
\begin{split}
m \mapsto \sum_{h \in G_M} \frac{1}{|G_M|} \lambda^M_l(h) h(m) \mapsto 
 \sum_{h \in G_M} \frac{1}{|G_M|} \lambda^M_l (h) \frac{1}{|G_M|} \frac{1}{|G|} \lambda^M_l(gh^{\mone})gh^{\mone}(h(m) \otimes 1) 
 & \\
= \sum_{h \in G_M} \frac{1}{|G||G_M|^2} \lambda^M_l(h) \lambda^M_l(gh^{\mone}) (g(m) \otimes 1) = \frac{1}{|G_M||G|}\lambda_l^M(g)(g(m)\otimes 1),
\end{split}
\end{displaymath}

if $g$ is in $G_M$, and zero otherwise. Going down and then to the right,
the $g,k$-component of
$(\varepsilon_{l}^M\otimes^G{\pi}_{1})\circ \varphi$ yields
\begin{displaymath}
\begin{split}
m \mapsto \sum_{s\in G_M} \frac{1}{|G||G_M|} \lambda^M_l(s) (s(m) \otimes 1) \mapsto \sum_{s \in G_M, t \in G}\frac{1}{|G|^2 |G_M|^2} \lambda^M_l(s) \lambda^M_l(gs^{\mone})gs^{\mone}(s(m)\otimes 1)
& \\
= \sum_{t \in G} \frac{1}{|G|^2 |G_M|} \lambda^M_l(g)(g(m) \otimes 1) = \frac{1}{|G||G_M|} \lambda^M_l(g) (g(m) \otimes 1).
\end{split}
\end{displaymath}
Hence, the diagram commutes.

To construct a morphism $\psi$ from the left hand side of \eqref{eq8} to the right hand side,
consider the diagram
\begin{displaymath}
\xymatrix{\displaystyle\bigoplus_{g\in G}(M^{g} \otimes_A A)
\ar[rrr]^{\varepsilon^M_{l}\otimes^G{\pi}_{1}}
\ar[d]_{(\psi_{h,g})_{g,h \in G}}
&&&
\displaystyle\bigoplus_{u,k\in G} M^{u}\otimes_A A^{k}
\ar[d]^{\big( (\psi_{sk^{\mone},uk^{\mone}})_{sk^{\mone},uk^{\mone}\in G}^{k}\big)_{k\in G}}
\\
\displaystyle\bigoplus_{h\in G}  M^{h}
\ar[rrr]^{( \varepsilon^M_{l})^{h}_{h\in G}}
&&&
\displaystyle\bigoplus_{s\in G}M^{s}.
}
\end{displaymath}
where $\psi_{h,g}$ sends $m\otimes 1\in M^{g}\otimes_A A$
to $\frac{1}{|G_M|}\lambda_l^M(hg^{\mone})h g^{\mone}(m)\in M^{h}$,
if $hg^{\mone}\in G_M$, and to zero otherwise.

Fix $g,s\in G$. Going down and then to the right we obtain
\begin{displaymath}
\begin{split}
 m \otimes 1 \mapsto \sum_{h \in G_M} \frac{1}{|G_M|} \lambda^M_l(hg^{\mone})hg^{\mone}(m)  \mapsto \sum_{h \in G_M} \frac{1}{|G_M|^2} \lambda^M_l(hg^{\mone}) & \lambda_l^M(sh^{\mone}) sh^{\mone}(hg^{\mone}(m)) 
  \\
 =\frac{1}{|G_M|} \lambda_l^M(sg^{\mone})sg^{\mone}(m),
\end{split}
\end{displaymath}
where the last equality follows from the fact that $\varepsilon_l$ is an idempotent.

Next we calculate the composition first going right and then down:

\begin{displaymath}
\begin{split}
 & m \otimes 1  \mapsto \sum_{u \in G_M, k \in G} \frac{1}{|G_M||G|} \lambda_l^M(ug^{\mone})(ug^{\mone}(m) \otimes 1) 
  \\
& \mapsto \sum_{u\in G_M, k \in G} \frac{1}{|G_M|^2} \frac{1}{|G|}  \lambda_l^M(ug^{\mone})  \lambda^M_l(su^{\mone})  su^{\mone}(ug^{\mone}(m)) 
\\
& =\sum_{k\in G} \frac{1}{|G_M|} \frac{1}{|G|}\lambda_l^M(sg^{\mone})sg^{\mone}(m)=\frac{1}{|G_M|} \lambda_l^M(sg^{\mone})sg^{\mone}(m).
\end{split}
\end{displaymath}

Thus, the diagram commutes and $\psi$ is well-defined.


Now we claim that both compositions $\varphi\circ\psi$ and $\psi \circ \varphi$
are the identities, i.e. the respective idempotents. The $k$-component of the
composition $\psi \circ \varphi$ sends $m$ to

\begin{displaymath}
\begin{split}
\sum_{s \in G_M, t \in G} \frac{1}{|G_M|^2 |G|} \lambda^M_l(s) \lambda^M_l (ks^{\mone}) ks^{\mone}(s(m)) 
 = \frac{1}{|G_M|} \lambda^M_l(k)k(m).
\end{split}
\end{displaymath}

The $g,s,t$-component of the composition $\varphi \circ \psi$ sends $m\otimes 1 \in M^g \otimes_A A$  to

\begin{displaymath}
\begin{split}
 \sum_{sk^{\mone}, kg^{\mone} \in G_M} \frac{1}{|G_M|^2 |G|}\lambda^M_l(kg^{\mone})\lambda^M_l(sk^{\mone}) sk^{\mone}(kg^{\mone} \otimes 1) \\
 = \frac{1}{|G_M||G|} \lambda^M_l (sg^{\mone}) (sg^{\mone} \otimes 1)
\end{split}
\end{displaymath}
in $M^s \otimes_A A^t$.
The first isomorphism in \eqref{eq8} follows and the second is proved analogously.

\end{proof}

\subsection{The bicategories $\cX_A$ and $\cG_{A}$}

We use the data above to define a bicategory $\cX_A$ with
\begin{itemize}
\item one object $\bullet$;
\item $\cX_A(\bullet,\bullet) = \tilde \X_A$;
\item horizontal composition given by the tensor product $- \otimes^{G} - $ in $\tilde{\X}_A$;
\item the identity $1$-morphism given by $(A,{\pi}_{1})$;
\item for each triple of $1$-morphisms $X,Y,Z$ an associatior $\alpha_{X,Y,Z}: (X \otimes^{G} Y) \otimes^{G} Z \to X \otimes^{G} (Y \otimes^{G} Z)$, induced by the standard associator in the category of $A$-$A$-bimodules (see Section \ref{bimodconv});
\item left and right unitors $\lambda_X: (A, {\pi_1})  \otimes^{G} X \xrightarrow{\cong} X$ and $\rho_X: X \otimes^{G} (A, {\pi_1}) \xrightarrow{\cong} X$ induced by the isomorphisms in Proposition 
 \ref{propmultid2}.
\end{itemize}

Since $(A,{\pi}_{1}) \oplus(A\otimes_{\Bbbk}A)$ is, up to isomorphism, invariant under the twist
$M\mapsto M^{g}$ for $g\in G$, and its additive closure is closed under horizontal composition, we can define a bicategory $\cG_{A}$ as the $2$-full subbicategory of $\cX_A$ whose $1$-morphisms are given by objects  in the additive closure of
$(A,{\pi}_{1}) \oplus(A\otimes_{\Bbbk}A)$ inside $\tilde{\X}_A$. This is finitary by construction.

%

\subsection{Embedding $\cX_A$ into $\cB_A$}

Define a functor $\Theta\colon \mathcal{X}_A\to \cB_A(\bullet,\bullet)$ by $M \mapsto \bigoplus_{g \in G} M^g$ on objects and by mapping $f: M \to N$ to $\Theta(f)= (f^g_{hg^{\mone}}): \bigoplus_{g \in G} M^g \to \bigoplus_{h \in G} N^h$ on morphisms. Notice that $\Theta$ is faithful by construction.

Using the definition on morphisms we can extend $\Theta$ to the category $\tilde{\X}_A$. Recall that objects of $\tilde{\X}_A$ are pairs $(M,e)$ where $M$ is an $A$-$A$-bimodule and $e$ is an idempotent in $\End_{\X_A}(M)$. Thus, we extend $\Theta: \tilde{\mathcal{X}_A}\to \cB_A(\bullet,\bullet)$ via $(M,e) \mapsto \Theta(e)\Theta(M)$.

\begin{lemma}\label{thetapseudo}
$\Theta$ defines a pseudofunctor from $\cX_A$ to $\cB_A$.
\end{lemma}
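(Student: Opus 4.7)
The plan is to equip $\Theta$ with the structure of a strong pseudofunctor by constructing, for each pair $X, Y$ of $1$-morphisms, a natural isomorphism
\[\theta_{X,Y}\colon \Theta(X) \otimes_A \Theta(Y) \xrightarrow{\cong} \Theta(X \otimes^G Y),\]
together with a unit isomorphism $\iota\colon A \xrightarrow{\cong} \Theta((A, \pi_1))$, and then verifying the pentagon and triangle coherence axioms.

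For the tensorator, expand
\[\Theta(X \otimes^G Y) = \bigoplus_{k \in G} \Bigl(\bigoplus_{h \in G} X^h \otimes_A Y\Bigr)^k \cong \bigoplus_{h, k \in G} X^{hk} \otimes_A Y^k\]
using Lemma \ref{canoniso} summandwise, then reindex via $g = hk$ to match $\bigoplus_{g, k \in G} X^g \otimes_A Y^k = \Theta(X) \otimes_A \Theta(Y)$; the inverse of this identification is $\theta_{X,Y}$. Extension to $\tilde{\X}_A$ is automatic: by the definition of $e \otimes^G f$ on morphisms, $\Theta(e \otimes^G f)$ corresponds under $\theta$ precisely to $\Theta(e) \otimes_A \Theta(f)$, so taking images of these idempotents commutes with $\theta$. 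For $\iota$, recall that $\tilde\pi_1 = \frac{1}{|G|}\sum_{g \in G} g$, so $\Theta(\pi_1)$ on $\bigoplus_{g \in G} A^g$ averages the canonical $A$-$A$-bimodule isomorphisms $A \xrightarrow{\cong} A^g$, $a \mapsto g(a)$. A direct verification shows that the map $\iota\colon a \mapsto (g(a))_{g \in G}$ is an $A$-$A$-bimodule monomorphism onto the image of $\Theta(\pi_1)$, hence an isomorphism $A \xrightarrow{\cong} \Theta((A, \pi_1))$.

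The coherence axioms then split into three pieces. The pentagon reduces to compatibility of the canonical isomorphisms $(M \otimes_A N)^k \cong M^k \otimes_A N^k$ of Lemma \ref{canoniso} with the bimodule associator of Section \ref{bimodconv}, which is immediate since both are identities on underlying tensors. The triangle axioms for the unitors $\lambda$ and $\rho$ follow from Proposition \ref{propmultid2}, whose explicit formulas were used to define those unitors in $\cX_A$. The main obstacle I anticipate is naturality of $\theta_{X,Y}$ against a pair of morphisms $\varphi \otimes^G \psi$ in $\X_A$: since a morphism $\varphi \in \Hom_{\X_A}(X, X')$ is the tuple $(\varphi_g)_{g \in G}$ with $\varphi_g \colon X \to (X')^g$, and $\otimes^G$ on morphisms involves the explicit twist $(\varphi_{gk^{\mone}})^k \otimes \psi_h$, verifying commutativity of the naturality square $\theta_{X', Y'} \circ (\Theta(\varphi) \otimes_A \Theta(\psi)) = \Theta(\varphi \otimes^G \psi) \circ \theta_{X, Y}$ amounts to careful bookkeeping of indices and the action of $(-)^k$ on morphisms. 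Once this is in place, the remaining verifications are formal consequences of the coherences in $\cB_A$.
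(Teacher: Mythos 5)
Your construction of the tensorator is exactly the paper's: the same chain of identifications $\Theta(M\otimes^G N)\cong\bigoplus_{k,h\in G}M^k\otimes_A N^h\cong\Theta(M)\otimes_A\Theta(N)$ via Lemma \ref{canoniso} applied summandwise followed by reindexing, with the same appeal to functoriality for the extension to $\tilde{\X}_A$. You are in fact somewhat more explicit than the paper about the unit isomorphism $A\cong\Theta((A,\pi_1))$ and about which coherence and naturality checks remain (the paper simply asserts compatibility with associators and unitors), so the proposal is correct and takes essentially the same route.
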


\proof

Note that for $M,N\in \mathcal{X}_A$
$$\Theta(M \otimes^G N)= \Theta \big(\bigoplus_{g \in G} M^g \otimes_A N \big) = \bigoplus_{h \in G} \bigoplus_{g \in G} (M^g \otimes_A N)^h = \bigoplus_{g,h \in G} M^{gh} \otimes_A N^h = \bigoplus_{k,h \in G} M^k \otimes_A N^h,$$

where the penultimate equality is true by Lemma \ref{canoniso}. 

On the other hand, $$\Theta(M) \otimes_A \Theta(N) = \bigoplus_{g \in G} M^g \otimes_A \bigoplus_{h \in G} N^h.$$

Thus, we have a natural isomorphism,
$$ J_{M,N}: \Theta(M) \otimes_A \Theta(N) \to \Theta(M \otimes^G N),$$
compatible with associativity and unitors, which, by functoriality, extends to $\tilde \X_A$.
\endproof

\begin{lemma}\label{thetaimage}
The essential image of $\Theta$ consists of all $G$-equivariant bimodules.
\end{lemma}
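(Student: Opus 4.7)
The plan is to prove the two inclusions separately.

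For the forward direction (essential image $\subseteq$ $G$-equivariant bimodules), I will first equip $\Theta(M) = \bigoplus_{g \in G} M^g$ with a canonical $G$-equivariant structure. Using Lemma~\ref{canoniso} to identify $\Theta(M)^k = \bigoplus_g (M^g)^k$ with $\bigoplus_g M^{gk}$, the evident re-indexing map (sending the $g$-th summand of $\Theta(M)$ identically onto the $(gk^{\mone})$-th summand of $\Theta(M)^k$) gives the structure morphism $\alpha_k^{\Theta(M)}$, and the cocycle condition \eqref{G-equivobj} follows tautologically. A short calculation using the formula $\Theta(f) = (f^g_{hg^{\mone}})_{g,h}$ shows that $\Theta(f)$ intertwines these structures for any $f$ in $\X_A$, and in particular $\Theta(e)$ is a $G$-equivariant idempotent. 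Hence $\Theta(M,e) = \Theta(e)\Theta(M)$ is a direct summand of the $G$-equivariant bimodule $\Theta(M)$ by a $G$-equivariant splitting, and so inherits a $G$-equivariant structure.

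For the reverse direction, given a $G$-equivariant bimodule $(N, \{\alpha_g^N\}_{g \in G})$, I will construct an explicit preimage in $\tilde{\X}_A$ by setting
$$e = \bigl(\tfrac{1}{|G|}\alpha_g^N\bigr)_{g \in G} \in \End_{\X_A}(N) = \bigoplus_{g \in G} \Hom_{A\text{-}A}(N, N^g).$$
Applying the composition formula \eqref{eq1} together with the cocycle identity $(\alpha_{sg^{\mone}}^N)^g \circ \alpha_g^N = \alpha_s^N$ implicit in \eqref{G-equivobj} shows $e^2 = e$, so $(N,e) \in \tilde{\X}_A$. I will then define a bimodule map $\Psi\colon N \to \Theta(N)$ by $n \mapsto \sum_{g \in G} \alpha_g^N(n)$, with $\alpha_g^N(n)$ placed in the $g$-th summand. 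The same cocycle identity yields $\Theta(e)\circ\Psi = \Psi$, so $\Psi$ factors through $\Theta(N,e)$; conversely, given any element $\sum_g x_g \in \Theta(N)$ with $x_g \in N^g$, substituting $n_g = (\alpha_g^N)^{\mone}(x_g)$ into $\Theta(e)$ applied to this element collapses the double sum to $\Psi(\tfrac{1}{|G|}\sum_g n_g)$, showing that the restriction of $\Psi$ surjects onto $\Theta(N,e)$. Since $\Psi$ is injective by the direct-sum structure of $\Theta(N)$, this yields an isomorphism $N \xrightarrow{\sim} \Theta(N, e)$ of $A$-$A$-bimodules.

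The main obstacle I expect is keeping the summand-indexing consistent across the two shifts at play — the twist $(-)^g$ on bimodules, and the re-indexing by $g \mapsto gk^{\mone}$ appearing in both $\alpha_k^{\Theta(M)}$ and in the composition formula \eqref{eq1} — particularly when checking that $\Theta(f)$ is $G$-equivariant, where both conventions appear simultaneously in the formulas. Once this bookkeeping is clarified, every remaining step reduces to a direct application of the cocycle identity and \eqref{eq1}.
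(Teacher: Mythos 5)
Your proposal is correct and follows essentially the same route as the paper's proof: the same re-indexing equivariant structure on $\Theta(M)$ for the forward inclusion, and for the converse the same idempotent $e=\bigl(\tfrac{1}{|G|}\alpha_g\bigr)_{g\in G}$ together with the map $n\mapsto(\alpha_g(n))_{g\in G}$ identifying $N$ with $\Theta(N,e)$. Your additional checks (that $\Theta(f)$ is equivariant for every morphism $f$, and the explicit surjectivity of $\Psi$ onto $\Theta(e)\Theta(N)$) only flesh out steps the paper leaves implicit.
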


\proof
Let $(N, e) \in \tilde{\X}_A$ and set $M=\Theta(N)=\bigoplus_{h \in G} N^h$. Thus, we have a well-defined  isomorphim $\alpha_g: \bigoplus_{h \in G} N^h \to \bigoplus_{h \in G} N^{gh} $, which is given by relabelling the components. Since this is essentially the identity map, it is clear that this produces a $G$-equivariant structure on $M$. This commutes withe the idempotent $\Theta(e)$ and hence also defines a $G$ equivariant structure on $\Theta(N,e)$.

Conversely,  suppose that $M$ is a $G$-equivariant object of $\cB_A(\bullet,\bullet)$. Let $\alpha_g: M \to M^g$ be the corresponding isomorphism defining the equivariant structure of $M$.
Viewing $M$ as an object of $\X_A$, consider $\Theta(M)=\bigoplus_{g \in G} M^g$ and let $e \in \End_{\X_A} (M)$ be defined by $e=\frac{1}{|G|} (\alpha_g)_{g \in G}$. Notice that 
 $$ e \circ e = \frac{1}{|G|^2} \sum_s \alpha^s_{hs^{\mone}} \alpha_s = \frac{1}{|G|} \alpha_h=e.$$
Thus, $e$ is an idempotent. 

Consider the map $\bar{\alpha}: M \to \Theta(M)$ given by $m \mapsto (\alpha_g(m))_{g \in G}$. Since $e$ is an idempotent in $\End_{\X_A}(M)$, $\Theta(e)$ is an idempotent in $\Theta(M)$ so we have an embedding $\Theta(e)\Theta(M) \to \Theta(M)$. We claim that $\bar{\alpha}$ factors over $\Theta(e) \Theta(M)$, i.e. $ (\alpha_g(m))_{g \in G} \in \Theta(e) \Theta(M)$. Indeed,

\begin{equation*}
\begin{split}
\Theta(e)(\alpha_g(m))_{g \in G}=& \frac{1}{|G|}(\alpha_{st^{\mone}})_{s,t \in G}(\alpha_g(m))_{g \in G}  \\
=&  \frac{1}{|G|} \sum_{g \in G} (\alpha^g_{sg^{\mone}}\alpha_g(m))_{s \in G}=(\alpha_s(m))_{s \in G}.
\end{split}
\end{equation*}

Therefore, $\Theta(e)(\alpha_g(m))_{g \in G}=(\alpha_g(m))_{g \in G}$ and hence $(\alpha_g(m))_{g \in G} \in \Theta(e)\Theta(M)$ as required. 

Moreover, the map $\bar{\alpha}$ has an inverse $\bar{\beta}: \Theta(M) \to M$ given by $(\alpha_g(m))_{g \in G} \mapsto m$. 
This shows that $\Theta(e)\Theta(M)=\Theta(M,e) \cong M$, completing the proof.
\endproof

\begin{corollary} \label{thetares}
$\Theta$ restricts to a pseudofunctor $\cG_A\to\cC_A$, whose essential image consists of all $G$-equivariant $1$-morphisms.
\end{corollary}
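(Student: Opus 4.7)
The plan is to split the claim into two steps: first, that $\Theta$ restricts to a well-defined pseudofunctor $\cG_A\to\cC_A$; and second, that its essential image consists of the $G$-equivariant $1$-morphisms of $\cC_A$. Since $\cG_A$ and $\cC_A$ are $2$-full subbicategories of $\cX_A$ and $\cB_A$ respectively, and $\Theta\colon\cX_A\to\cB_A$ is already a pseudofunctor by Lemma \ref{thetapseudo}, for the first step it suffices to check that $\Theta$ sends the two generating $1$-morphisms of $\cG_A$ into $\cC_A$.

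For the generator $(A\otimes_\Bbbk A,1)$, the image $\Theta(A\otimes_\Bbbk A,1)=\bigoplus_{g\in G}(A\otimes_\Bbbk A)^g$ is a $|G|$-fold direct sum of copies of $A\otimes_\Bbbk A$, using the bimodule isomorphism $(A\otimes_\Bbbk A)^g\cong A\otimes_\Bbbk A$ given by $x\otimes y\mapsto g(x)\otimes g(y)$ (well-defined since $g$ is an algebra automorphism); hence it lies in $\cC_A$. For the generator $(A,\pi_1)$, I would read the reverse direction of the proof of Lemma \ref{thetaimage} applied to $A$ with its natural $G$-equivariant structure $\alpha_g=\phi_g$, where $\phi_g\colon A\to A^g$ is $a\mapsto g(a)$; the associated idempotent $\tfrac{1}{|G|}(\phi_g)_{g\in G}\in\End_{\X_A}(A)$ matches $\pi_1$, because under the identification of Lemma \ref{lem0} the trivial-character idempotent $\tfrac{1}{|G|}\sum_{g\in G}g\in\Bbbk[G]$ corresponds precisely to this averaging tuple. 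Hence $\Theta(A,\pi_1)\cong A$, so the restriction is well-defined.

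For the essential image, one inclusion is immediate from Lemma \ref{thetaimage} and the previous step: every $N$ in the essential image of $\Theta|_{\cG_A}$ is $G$-equivariant and lies in $\cC_A$. Conversely, let $N\in\cC_A$ admit a $G$-equivariant structure, and write $N\cong A^{\oplus r}\oplus(A\otimes_\Bbbk A)^{\oplus s}$ as an $A$-$A$-bimodule. Using that $\Theta(A,\pi_1)\cong A$, and that a single copy of $A\otimes_\Bbbk A$ is a direct summand of $\Theta(A\otimes_\Bbbk A,1)\cong(A\otimes_\Bbbk A)^{|G|}$, realised by an idempotent $f\in\End_{\tilde{\X}_A}((A\otimes_\Bbbk A,1))$, the object $X=(A,\pi_1)^{\oplus r}\oplus(A\otimes_\Bbbk A,f)^{\oplus s}$ lies in $\cG_A$ and satisfies $\Theta(X)\cong N$ in $\cC_A$.

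The most delicate step will be matching the abstract primitive idempotent $\pi_1$, defined via the Wedderburn decomposition of $\Bbbk[G_A]$, with the concrete averaging idempotent arising from the natural equivariant structure on $A$; this hinges on correctly tracing the identifications of Lemma \ref{lem0} and of the proof of Lemma \ref{thetaimage}. Once this is in place, the extraction of a summand $f$ from the $|G|$-fold copy of $A\otimes_\Bbbk A$ and the assembly of $X$ from the generators is a routine idempotent-splitting argument in the Karoubian category $\tilde{\X}_A$.
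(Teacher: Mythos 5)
The first half of your argument — that the restriction is well defined — is correct and is essentially the paper's own proof: the authors also reduce to $\Theta(A,\pi_1)\cong A$ (via the converse construction in the proof of Lemma \ref{thetaimage}) together with the observation that $\Theta$ sends projective bimodules to projective bimodules. Your explicit matching of $\pi_1$ with the averaging idempotent $\tfrac{1}{|G|}(\phi_g)_{g\in G}$ is exactly the content the paper leaves implicit, and it works: since $\phi_g\colon a\mapsto g(a)$ is a bimodule isomorphism $A\to A^g$ for every $g$, one has $G_A=G$, and the trivial-character idempotent of $\Bbbk[G_A]$ is precisely the averaging tuple, so Lemma \ref{thetaimage} applied to $A$ with this equivariant structure gives $\Theta(A,\pi_1)\cong A$.

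There is, however, a genuine gap in your converse inclusion for the essential image. An object $N$ of $\cC_A$ lies in the \emph{additive closure} of $A\oplus A\otimes_\Bbbk A$, so it need not decompose as $A^{\oplus r}\oplus(A\otimes_\Bbbk A)^{\oplus s}$: it can contain proper direct summands of $A\otimes_\Bbbk A$. For instance, in the regular-action setting a single orbit sum $\bigoplus_{h\in G}Ae_{gh}\otimes_\Bbbk e_hA$ is a $G$-equivariant object of $\cC_A$ that is not a multiple of the full $A\otimes_\Bbbk A$, and your construction of the preimage $X$ from the two generators does not produce it. The repair is to run the converse construction of Lemma \ref{thetaimage} on $N$ itself, obtaining an idempotent $e_N=\tfrac{1}{|G|}(\alpha_g)_{g\in G}$ with $\Theta(N,e_N)\cong N$, and then to check that $(N,e_N)$ — or an object of $\cG_A$ with isomorphic image — actually lies in $\cG_A$: every indecomposable summand of $(N,e_N)$ supported on a projective bimodule is a direct summand of $(A\otimes_\Bbbk A,1)$ in $\tilde{\X}_A$ and hence lies in $\cG_A$, while any summand of the form $(A,\pi_j)$ with $j\neq 1$ is not in $\cG_A$ but has image isomorphic to a direct sum of copies of $A\cong\Theta(A,\pi_1)$, so it can be replaced without changing the image up to isomorphism. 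Relatedly, your assertion that a single copy of $A\otimes_\Bbbk A$ inside $\Theta(A\otimes_\Bbbk A,1)$ is ``realised by an idempotent $f\in\End_{\tilde{\X}_A}(A\otimes_\Bbbk A,1)$'' needs justification, since $\Theta$ is faithful but not full; it does hold, but only because $A\otimes_\Bbbk A$ carries the evident $G$-equivariant structure $x\otimes y\mapsto g(x)\otimes g(y)$ and $f$ is again the associated averaging idempotent from Lemma \ref{thetaimage}.
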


\proof 
This follows from $\Theta(A, {\pi}_1) \cong A$ by the proof of Lemma \ref{thetaimage}, combined with the fact that $\Theta$ maps projective objects to projective objects.
\endproof

\begin{theorem}\label{HAinGA}
Let $A$ be a finite-dimensional radically graded basic Hopf algebra and adopt the conventions from Section \ref{radgrbasichopf}. 
Identifying $\cR ep_A$ and $\cX_A$ with their essential images in $\cB_A$ under the pseudofunctors $\Gamma$ and $\Theta$, respectively, $\cR ep_A$ can be viewed a subbicategory of $\cX_A$.

Under the same identification, $\cH_A$ corresponds to a $1$-full subbicategory of $\cG_A$.
\end{theorem}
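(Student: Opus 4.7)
The first statement reduces to observing that the essential image of $\Gamma$ in $\cB_A$ is contained in the essential image of $\Theta$ at the level of both $1$- and $2$-morphisms, with compatible pseudofunctor structures. By Proposition \ref{Gamma-all-equiv}, every $\Gamma(M)$ carries a $G$-equivariant structure and every $\Gamma(f)$ is $G$-equivariant; by Lemma \ref{thetaimage}, the essential image of $\Theta$ on $1$-morphisms consists of exactly the $G$-equivariant bimodules. The analogous statement for $2$-morphisms, namely that morphism spaces in $\tilde\X_A$ between objects realized as in the proof of Lemma \ref{thetaimage} match $G$-equivariant bimodule morphisms, follows from the standard equivalence between the idempotent completion of a skew-category and the corresponding category of $G$-equivariant objects. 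Compatibility with horizontal composition is automatic, and we conclude that $\Gamma$ factors through $\Theta$, realizing $\cR ep_A$ as a subbicategory of $\cX_A$ inside $\cB_A$.

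For the second statement, first match generating $1$-morphisms. By Lemma \ref{zetaiso}, $\Gamma(L_1)\cong A$ as $A$-$A$-bimodules; applying the construction in the proof of Lemma \ref{thetaimage} to the canonical $G$-equivariant structure on $A$ gives $\Theta((A,\pi_1))\cong A$, so $\Gamma(L_1)$ corresponds to $(A,\pi_1)$. Next, $\Gamma(A)\cong A\otimes_\Bbbk A$, while $\Theta((A\otimes_\Bbbk A,1))=\bigoplus_{h\in G}(A\otimes_\Bbbk A)^h$, which by Lemma \ref{actiononproj} is isomorphic to $|G|$ copies of $A\otimes_\Bbbk A$ as a bimodule. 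Hence $\Gamma(A)$ is a direct summand of $\Theta((A\otimes_\Bbbk A,1))$ as a $G$-equivariant bimodule, and therefore corresponds to a summand (in $\tilde\X_A$) of $(A\otimes_\Bbbk A,1)$. Since $\cG_A$ is defined as the additive closure of $(A,\pi_1)\oplus(A\otimes_\Bbbk A)$ inside $\tilde\X_A$, which contains all direct summands, every $1$-morphism of $\cH_A$ identifies with a $1$-morphism of $\cG_A$.

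For $1$-fullness, after identification both the $2$-morphisms in $\cH_A$ and the $2$-morphisms in $\cG_A$ between $1$-morphisms that come from $\cH_A$ are given by $G$-equivariant bimodule morphisms (by the analysis of the first statement). The remaining content is that every such $G$-equivariant bimodule morphism between direct sums of $A$ and $A\otimes_\Bbbk A$ arises from $\Gamma$ applied to a morphism in $\cR ep_A$. This is the main technical obstacle: it is established block-wise on the indecomposable summands via the adjunction $(\Gamma,\Phi)$ together with the explicit $\Bbbk$-bases of $B$ from Lemma \ref{AbasisofB}, which allow one to identify the image of $\Gamma$ on the Hom-spaces $\Hom_A(L_1,L_1)$, $\Hom_A(L_1,A)$, $\Hom_A(A,L_1)$ and $\Hom_A(A,A)$ with the corresponding spaces of $G$-equivariant bimodule morphisms between $A$ and $A\otimes_\Bbbk A$. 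Combining these block computations additively yields the required $1$-fullness.
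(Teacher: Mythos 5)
Your handling of the first statement is essentially the paper's own argument (Proposition \ref{Gamma-all-equiv} plus Lemmas \ref{thetapseudo} and \ref{thetaimage}, with the skew-category/equivariantization equivalence supplying the $2$-morphism comparison), and is fine.

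The second statement is where the proposal breaks down, and the root cause is a misreading of ``$1$-full''. A $1$-full subbicategory contains, up to isomorphism and additive closure, \emph{all} $1$-morphisms of the ambient bicategory between its objects, while its $2$-morphism spaces may be strictly smaller (contrast the ``$2$-full'' subbicategories $\cC_A$, $\cH_A$, $\cG_A$ defined in the paper). So the claim to be proved is purely about $1$-morphisms: every indecomposable $1$-morphism of $\Theta(\cG_A)$ is isomorphic to one in $\Gamma(\cH_A)$. Your argument correctly gives the containment $\Gamma(\cH_A)\subseteq\Theta(\cG_A)$ on $1$-morphisms, but never addresses the reverse containment, which is the actual content. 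The paper obtains it from regularity of the $G$-action on $\mathtt{E}$: each $Ae_g\otimes_\Bbbk e_hA$ is indecomposable in $\tilde{\X_A}$ (via Lemma \ref{lem0}, since its stabiliser is trivial) and isomorphic there to $Ae_{gh^{-1}}\otimes_\Bbbk e_1A$, whose image under $\Theta$ is $\bigoplus_k Ae_{gh^{-1}k}\otimes_\Bbbk e_kA\cong\Gamma(Ae_{gh^{-1}})$; hence the two essential images have exactly the same indecomposable $1$-morphisms, namely $A$ and the $\bigoplus_k Ae_{gk}\otimes_\Bbbk e_kA$ for $g\in G$.

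Worse, the ``remaining content'' you set out to prove --- that every $G$-equivariant bimodule morphism between direct sums of $A$ and $A\otimes_\Bbbk A$ arises from $\Gamma$ --- is not only unnecessary but false. For instance, the object of $\cG_A$ whose $\Theta$-image is $\Gamma(A)\cong A\otimes_\Bbbk A$ is $N=\bigoplus_{g\in G}Ae_g\otimes_\Bbbk e_1A$, and $\dim\End_{\tilde{\X_A}}(N)=\dim A\cdot\dim e_1A$, which strictly exceeds $\dim\Hom_A(A,A)=\dim A$ whenever $A$ is not semisimple; so $\Gamma$ cannot surject onto these $2$-morphism spaces. More structurally, if your fullness claim held, the identification of $\cH_A$ with its image would be a biequivalence $\cH_A\simeq\cG_A$, contradicting the point of Theorem \ref{thm11} and the remark following it that $\cG_A$ has finitely many equivalence classes of simple transitive birepresentations while $\cH_A$ generally has infinitely many. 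The block-wise adjunction computation you sketch would therefore not close the argument.
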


\proof
The first statement follows from Proposition \ref{Gamma-all-equiv}, Lemma \ref{thetapseudo} and Lemma \ref{thetaimage}.

For the second statement, note that $A$ being a finite-dimensional radically graded basic Hopf algebra implies that the action of $G$ on the set of idempotents is regular. Thus, the $1$-morphism $Ae_g\otimes e_h A$, for $g,h\in G$, in $\cG_A$ are indecomposable (and isomorphic to $Ae_{gh^{-1}}\otimes e_1 A$). Now  the indecomposable $1$-morphisms in the essential images of both $\cH_A$  and $\cG_A$ are those isomorphic to the identity or to $\bigoplus_{h\in G} Ae_{gh}\otimes e_h A$ for some $g\in G$.
\endproof

\subsection{The bicategory $\widetilde{\cG_A}$}\label{s3.3}

Assume that we are in the setup of Subsection~\ref{s3.1}. Further, we assume that $A$ is basic and has a fixed complete $G$-invariant set $\mathtt{E}$ of primitive idempotents such that $G$ acts regularly on $\mathtt{E}$. We can thus choose an idempotent $e_1$ as a base point and label all other idempotents by group elements, obtaining $\mathtt{E} = \{e_g | g \in G\}$.
This labelling is chosen such that $h(e_g) = e_{gh^{\mone}}$. 

Write $A=\Bbbk Q/I$ for a quiver $Q$ and admissible ideal $I$. Let $A_0=\Bbbk Q_0$ and define $\hat A= A\times A_0$. To distinguish the idempotents in $\hat A$ coming from the copies of $A$ and $A_0$ respectively, given an idempotent $e_g$ in $A$, we denote the corresponding idempotent in $A_0$ by $\bar e_g$.

The group action of $G$ on $A$ extends naturally to an action on $\hat A$ by leaving the two factors invariant and permuting the idempotents in $A_0$, so we can consider the categories $\X_{\hat A}$ and  $\tilde \X_{\hat A}$ of symmetric ${\hat A}$-${\hat A}$-bimodules. Note that $\X_A$ and  $\tilde \X_A$ (and similarly $\X_{A_0}$ and  $\tilde \X_{A_0}$) can be viewed as full subcategories of $\X_{\hat A}$ and  $\tilde \X_{\hat A}$, respectively.

\begin{lemma}\label{projBaction}
For all $g,g',h\in G$, we have isomorphisms $$(A e_g\otimes_{\Bbbk}e_{g'}A)^h\cong Ae_{gh}\otimes_{\Bbbk}e_{g'h}A  \qquad (A_0\bar e_g\otimes_{\Bbbk}\bar e_{g'}A_0)^h\cong A_0\bar e_{gh}\otimes_{\Bbbk}\bar e_{g'h}A_0$$
$$(Ae_g\otimes_{\Bbbk}\bar e_{g'}A_0)^h\cong Ae_{gh}\otimes_{\Bbbk}\bar e_{g'h}A_0 \qquad(A_0\bar e_g\otimes_{\Bbbk}e_{g'}A)^h\cong A_0\bar e_{gh}\otimes_{\Bbbk}e_{g'h}A .$$
Moreover, for $g,g'\in G$, the ${\hat A}$-${\hat A}$-bimodules $A e_g\otimes_{\Bbbk}e_{g'}A$, $A_0\bar e_g\otimes_{\Bbbk}\bar e_{g'}A_0$,  $Ae_g\otimes_{\Bbbk}\bar e_{g'}A_0$ and $A_0\bar e_g\otimes_{\Bbbk}e_{g'}A$ are indecomposable in $\tilde{X}_{\hat A}$.
\end{lemma}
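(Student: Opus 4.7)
The four claimed isomorphisms follow by a direct extension of Lemma \ref{actiononproj}. Since the $G$-action on $\hat A = A \times A_0$ preserves each factor and permutes the idempotents in both factors via $h(e_g) = e_{gh^{\mone}}$ and $h(\bar e_g) = \bar e_{gh^{\mone}}$, the idempotent computation $\hat e_s \cdot (\hat e_g \otimes \hat e_{g'}) \cdot \hat e_t = h(\hat e_s)\hat e_g \otimes \hat e_{g'} h(\hat e_t)$ used in the proof of Lemma \ref{actiononproj} applies verbatim to each of the four tensor-product flavours: only the group-element index of the idempotent enters the calculation, not whether it carries a bar. In each case the nonzero components pick out $s = gh$ and $t = g'h$, yielding the desired identification.

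For the indecomposability claim the strategy is, for each of the four bimodules $M$ in the list, to verify (i) that $M$ is indecomposable as an $\hat A$-$\hat A$-bimodule, and (ii) that its stabiliser $G_M = \{ h \in G \mid M \cong M^h \}$ is trivial. Combined with Lemma \ref{lem0}, which identifies $\mathrm{End}_{\mathcal{X}_{\hat A}}(M)/\mathrm{Rad}$ with $\Bbbk[G_M]$, this leaves the identity as the only primitive idempotent of $\mathrm{End}_{\mathcal{X}_{\hat A}}(M)$, so $M$ itself is indecomposable in $\tilde{\mathcal{X}}_{\hat A}$. Part (i) is immediate: each listed module is a $\otimes_\Bbbk$-product of an indecomposable projective left $\hat A$-module (one of $Ae_g$ or $A_0 \bar e_g$) with an indecomposable projective right $\hat A$-module.

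Triviality of $G_M$ is where regularity of the $G$-action on $\mathtt E$ enters. Take $M = Ae_g \otimes_\Bbbk e_{g'}A$ as a representative case: by the first part, $M^h \cong Ae_{gh} \otimes_\Bbbk e_{g'h}A$, and any $\hat A$-$\hat A$-bimodule isomorphism $M \cong M^h$ restricts to an isomorphism $Ae_g \cong Ae_{gh}$ of left $\hat A$-modules. Since $A$ is basic and the $e_g$ form a complete set of pairwise non-conjugate primitive idempotents (as $G$ acts regularly on $\mathtt E$), this forces $e_g = e_{gh}$ and hence $h = 1$. The identical argument handles the other three cases, with the $A_0$-side inputs supplied by basicness of $A_0$ and the pairwise non-isomorphism of the $A_0 \bar e_g$.

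The only point requiring mild care is the two ``mixed'' bimodules $Ae_g \otimes_\Bbbk \bar e_{g'}A_0$ and $A_0\bar e_g \otimes_\Bbbk e_{g'}A$, where the left and right $\hat A$-actions factor through different direct summands of $\hat A = A \times A_0$. However, because $h \in G$ preserves each factor, the twist $(-)^h$ still acts component-wise, so the first-part isomorphism and the comparison of indecomposable projectives on each side decouple cleanly into an $A$-side and an $A_0$-side argument, and the same triviality-of-stabiliser reasoning goes through unchanged.
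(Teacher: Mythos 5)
Your proposal is correct and follows essentially the same route as the paper: the four isomorphisms are obtained by running the idempotent computation of Lemma \ref{actiononproj} in each flavour (the paper phrases this via the inclusions of $\tilde{\X}_A$ and $\tilde{\X}_{A_0}$ into $\tilde{\X}_{\hat A}$ plus ``analogous computations''), and indecomposability in $\tilde{\X}_{\hat A}$ is deduced from Lemma \ref{lem0} together with freeness of the $G$-action on the idempotents, which is exactly your triviality-of-stabiliser argument. Your write-up merely spells out the details (indecomposability as a plain bimodule, and the Krull--Schmidt comparison forcing $h=1$) that the paper leaves implicit.
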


\proof
The description of the $G$-action on the first two modules is given by Lemma \ref{actiononproj} and the natural inclusions of  $\tilde \X_A$ and  $\tilde \X_{A_0}$ into  $\tilde \X_{\hat A}$. The third and fourth isomorphism are verified by analogous computations. Given freeness of the action of $G$ on the chosen set of primitive orthogonal idempotents of ${\hat A}$, the statement about indecomposability follows from Lemma \ref{lem0}.
\endproof

\begin{lemma}\label{A0id}
Let $M$ be a projective right $A\times A_0$-module, and $N$ a projective left $A\times A_0$-module. Then there are canonical isomorphisms
$$(\Bbbk  \bar e_1\otimes_\Bbbk  \bar e_1\Bbbk )\otimes^{G} (\Bbbk  \bar e_g \otimes_\Bbbk M) \cong \Bbbk  \bar e_g \otimes_\Bbbk M$$
and 
$$ (N\otimes_\Bbbk  \bar e_g\Bbbk)\otimes^{G} (\Bbbk  \bar e_1\otimes_\Bbbk  \bar e_1\Bbbk )\cong (N\otimes_\Bbbk  \bar e_g\Bbbk).$$
\end{lemma}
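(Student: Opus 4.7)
The plan is to unfold both tensor products directly from the definition $M\otimes^G N = \bigoplus_{h\in G} M^h\otimes_{\hat A} N$, and exploit the fact that all four bimodule factors are supported on the $A_0$-summand of $\hat A = A\times A_0$. Since $A_0 = \prod_{g\in G}\Bbbk\bar e_g$ is a product of copies of $\Bbbk$, tensoring over $\hat A$ reduces to tensoring over $A_0$, and is controlled by the elementary identity $\bar e_h A_0\bar e_g = \delta_{h,g}\Bbbk\bar e_g$. So only one summand in the direct sum over $G$ will contribute.

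For the first isomorphism, I would compute
\[
(\Bbbk\bar e_1\otimes_\Bbbk\bar e_1\Bbbk)\otimes^{G}(\Bbbk\bar e_g\otimes_\Bbbk M) \;=\; \bigoplus_{h\in G}(\Bbbk\bar e_1\otimes_\Bbbk\bar e_1\Bbbk)^h \otimes_{\hat A}(\Bbbk\bar e_g\otimes_\Bbbk M),
\]
then use Lemma \ref{projBaction} to identify $(\Bbbk\bar e_1\otimes_\Bbbk\bar e_1\Bbbk)^h \cong \Bbbk\bar e_h\otimes_\Bbbk\bar e_h\Bbbk$. The internal $\hat A$-tensor reduces to $\bar e_h A_0\otimes_{A_0}A_0\bar e_g$, which vanishes unless $h=g$ and is $\Bbbk\bar e_g$ when $h=g$. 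Using the canonical bimodule isomorphisms from Section \ref{bimodconv}, the surviving summand simplifies to $\Bbbk\bar e_g\otimes_\Bbbk M$, giving the asserted isomorphism.

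For the second isomorphism I would run the analogous computation: by Lemma \ref{projBaction}, $(N\otimes_\Bbbk\bar e_g\Bbbk)^h \cong N^h\otimes_\Bbbk\bar e_{gh}\Bbbk$, so the internal $\hat A$-tensor against $\Bbbk\bar e_1\otimes_\Bbbk\bar e_1\Bbbk$ now collapses to $\bar e_{gh}A_0\bar e_1 = \delta_{gh,1}\Bbbk\bar e_1$, selecting the single summand $h=g^{-1}$. An alternative, which avoids tracking the twist on $N$, is to invoke Lemma \ref{lem3} to rewrite $\bigoplus_h M^h\otimes_{\hat A}N$ as $\bigoplus_h M\otimes_{\hat A}N^{h^{-1}}$ and perform the selection there, yielding the identity $N\otimes_\Bbbk\bar e_g\Bbbk$ directly without a further twist. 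I expect the main obstacle to be purely bookkeeping, namely keeping the conventions for $(\,\cdot\,)^h$ consistent on both factors of a $\Bbbk$-tensor so that the reduction to $\bar e_h A_0\bar e_g$ comes out with the correct index; the appeal to Lemma \ref{lem3} in the second case is what sidesteps this cleanly.
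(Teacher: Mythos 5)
Your proposal is correct and follows essentially the same route as the paper: unfold $\otimes^{G}$ as a direct sum over $h\in G$, use the twist formula $(\Bbbk\bar e_1\otimes_\Bbbk\bar e_1\Bbbk)^h\cong \Bbbk\bar e_h\otimes_\Bbbk\bar e_h\Bbbk$ so that only the summand $h=g$ survives the $A_0$-tensor, and conclude via the canonical isomorphisms. Your explicit treatment of the second isomorphism (selecting $h=g^{-1}$ and absorbing the residual twist via Lemma~\ref{lem3}) is a welcome elaboration of the paper's ``follows similarly.''
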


\proof
We have canonical isomorphisms
\begin{equation*}\begin{split}
(\Bbbk  \bar e_1\otimes_\Bbbk  \bar e_1\Bbbk )\otimes^{G} (\Bbbk  \bar e_g \otimes_\Bbbk M) &= \bigoplus_{h\in G} (\Bbbk  \bar e_{h}\otimes_\Bbbk  \bar e_{h}\Bbbk )\otimes_{A_0} (\Bbbk  \bar e_g \otimes_\Bbbk M)\\
&\cong \Bbbk  \bar e_g \otimes_\Bbbk  \bar e_g\Bbbk \otimes_{A_0} \Bbbk  \bar e_g \otimes_\Bbbk M\\
&\cong \Bbbk  \bar e_g \otimes_\Bbbk M
\end{split}\end{equation*}
and the second isomorphism in the lemma follows similarly.
\endproof

 We now define a bicategory $\widetilde{\cG_A}$ as having
\begin{itemize}
\item two objects $\bullet$ and $\ast$;
\item $\widetilde{\cG_A}(\bullet,\bullet)$ is simply $\cG_A(\bullet,\bullet)$; 
\item $\widetilde{\cG_A}(\bullet,\ast)$ is given by bimodules in the additive closure of $A_0\otimes_\Bbbk A$ inside $\tilde\X_{\hat A}$;
\item $\widetilde{\cG_A}(\ast,\bullet)$ is given by bimodules  in the additive closure of $A\otimes_\Bbbk A_0$ inside $\tilde\X_{\hat A}$;
\item $\widetilde{\cG_A}(\ast,\ast)$ is given by bimodules  in the additive closure of $A_0\otimes_\Bbbk A_0 $ inside $\tilde\X_{\hat A}$;
\item the identity $1$-morphism on $\ast$ and the corresponding unitors are given by Lemma \ref{A0id};
\item $2$-morphisms and their composition are inherited from $\tilde\X_{\hat A}$;
\item horizontal composition and associators are inherited from $\tilde \X_{\hat A}$.
\end{itemize}

%
Set 
\begin{equation*}
\begin{split}
\S_{1}&:= \{(A,{\pi}_{1})\}\\
\S_{11}&:=\{Ae_g\otimes_{\Bbbk}e_1A \vert g\in G\}\\
\S_{01}&:=\{A_0 \bar e_g\otimes_{\Bbbk}e_1A \vert g\in G\}\\
\S_{10}&:=\{Ae_g \otimes_{\Bbbk} \bar e_1 A_0 \vert g\in G\}\\
\S_{00}&:= \{A_0 \bar e_g\otimes_{\Bbbk} \bar e_1A_0\vert g\in G\}.
\end{split}
\end{equation*}

\begin{lemma} \label{ind1morph}
Up to isomorphism, the indecomposable $1$-morphisms in $\widetilde{\cG_A}$ are given by bimodules in 
$$\S_{1} \cup\S_{11} \cup\S_{01} \cup \S_{10}\cup\S_{00}.$$
\end{lemma}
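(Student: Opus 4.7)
The plan is to analyse each of the four hom-categories of $\widetilde{\cG_A}$ by decomposing the generating bimodule into indecomposable summands and identifying the resulting indecomposables of $\tilde\X_{\hat A}$ up to $\X_{\hat A}$-isomorphism.

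First, using the decompositions $A=\bigoplus_g Ae_g$, $A=\bigoplus_h e_hA$ and similarly for $A_0$, I would write each generating bimodule as a direct sum of indecomposable projective $\hat A$-$\hat A$-bimodules of the form $P=Xe_g\otimes_\Bbbk e_hY$ with $X,Y\in\{A,A_0\}$ (with idempotents chosen from $\{e_g,\bar e_g\}$ according to the side). Lemma \ref{projBaction} already covers all four combinations of $(X,Y)$ and asserts both that each such $P$ is indecomposable in $\tilde\X_{\hat A}$ and the $G$-twist formula $P^k\cong Xe_{gk}\otimes_\Bbbk e_{hk}Y$. Setting $k=h^{-1}$ in this formula shows every such $P$ is $\X_{\hat A}$-isomorphic to a bimodule whose second tensor factor has idempotent $e_1$ or $\bar e_1$, placing it in exactly one of $\S_{11},\S_{01},\S_{10}$ or $\S_{00}$ according to the pair $(X,Y)$.

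Finally, the identity $1$-morphism on $\bullet$ is $(A,\pi_1)\in\S_1$, which is indecomposable because $G_A=G$ and $\pi_1$ is a primitive idempotent of $\Bbbk[G]$ by construction, via Lemma \ref{lem0}. The identity on $\ast$ furnished by Lemma \ref{A0id} is $A_0\bar e_1\otimes_\Bbbk\bar e_1A_0$, which already lies in $\S_{00}$ and has been covered above. No genuine obstacle is expected, since the key technical inputs (trivial stabilisers and the explicit $G$-action on mixed projective bimodules) are already contained in Lemma \ref{projBaction}; the only remaining work is the bookkeeping of which summand of each generating bimodule lands in which $\S_{ij}$.
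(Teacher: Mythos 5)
Your proposal is correct and follows essentially the same route as the paper's proof: decompose the generating bimodules into the indecomposable projectives $X\hat e_g\otimes_\Bbbk \hat e_h Y$, invoke Lemma \ref{projBaction} both for their indecomposability in $\tilde{\X}_{\hat A}$ and for the twist formula, and apply the twist by $h^{-1}$ to normalise the second idempotent to $e_1$ or $\bar e_1$. The only (harmless) elaborations are your explicit justification that $(A,\pi_1)$ is indecomposable via Lemma \ref{lem0} and the remark that the identity on $\ast$ lies in $\S_{00}$, both of which the paper subsumes under ``by construction.''
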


\proof
First note that by Lemma \ref{projBaction} for the objects in $\S_{11} \cup\S_{01} \cup \S_{10}\cup\S_{00}$ and by construction for $(A,{\pi}_{1})$, the given bimodules are indeed indecomposable $1$-morphisms in $\widetilde{\cG_A}$.

Also by construction and Lemma \ref{A0id}, each indecomposable $1$-morphism in $\widetilde{\cG_A}$ is isomorphic to $(A,{\pi}_{1})$ or one of $A \hat e_h\otimes_{\Bbbk}\hat e_{h'}A$ for $h,h'\in G$, with $\hat e_h\in\{e_h, \bar e_h\}$ for $h\in G$. It thus suffices to show that each of the $A \hat e_h\otimes_{\Bbbk}\hat e_{h'}A$ is isomorphic to one of the bimodules in $\S_{11} \cup\S_{01} \cup \S_{10}\cup\S_{00}$ in $\tilde{\X}_{\hat A}$. This, however, is a consequence of Lemma \ref{projBaction}, since $(A \hat e_h\otimes_{\Bbbk}\hat e_{h'}A)^{h'^{-1}}\cong A \hat e_g\otimes_{\Bbbk}\hat e_{1}A$ as ${\hat A}$-${\hat A}$-bimodules.
\endproof

\subsection{Two-sided cells in $\cG_{A}$ and $\widetilde{\cG_A}$}\label{s3.4}

We keep the assumptions from Section \ref{s3.3} and recall the notation introduced after Lemma~\ref{lem0}.

\begin{proposition}\label{prop7}
The bicategory $\cG_{A}$ has two $\H$-cells, which are also two-sided cells, namely

\begin{enumerate}[$($a$)$]
\item\label{prop7.1} $\mathcal{H}_1$
consisting of one element $(\mathbbm{1}_{\bullet})$;
\item\label{prop7.2}
the two-sided cell $\mathcal{H}_{0}$ consisting of all isomorphism classes of
$1$-morphisms in $\S_{11}$ .
\end{enumerate}
\end{proposition}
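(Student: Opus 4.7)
The plan is to enumerate the indecomposable $1$-morphisms of $\cG_A$, show by a direct horizontal composition that $\S_{11}$ forms a single left, right, $\H$-cell and two-sided cell, and finally separate $\one=(A,\pi_1)$ from $\S_{11}$ via the faithful pseudofunctor $\Theta$ of Corollary~\ref{thetares} together with a projectivity argument in $\cC_A$.

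First I would enumerate the indecomposables. By construction $\cG_A$ is the additive closure of $(A,\pi_1)\oplus (A\otimes_\Bbbk A)$ inside $\tilde\X_A$. Decomposing $A\otimes_\Bbbk A=\bigoplus_{g,h\in G}Ae_g\otimes_\Bbbk e_hA$ and applying Lemma~\ref{actiononproj}, regularity of the $G$-action on idempotents forces each bimodule summand to have trivial stabilizer, so it lifts to a unique indecomposable $(Ae_g\otimes_\Bbbk e_hA,\id)$ in $\tilde\X_A$; the twist $(Ae_g\otimes_\Bbbk e_hA)^k\cong Ae_{gk}\otimes_\Bbbk e_{hk}A$ groups these into orbits parametrised by $gh^{-1}\in G$ with representatives $Ae_{gh^{-1}}\otimes_\Bbbk e_1A$, exactly recovering $\S_{11}$. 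Thus the indecomposable $1$-morphisms of $\cG_A$ are $\{\one\}\cup\S_{11}$.

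For the cell structure of $\S_{11}$, the key computation I would carry out is that the $k=g$ summand of $M_{hg^{-1}}\otimes^G M_g$, with $M_g=Ae_g\otimes_\Bbbk e_1A$, equals $Ae_h\otimes_\Bbbk e_gAe_g\otimes_\Bbbk e_1A$, which contains $M_h=Ae_h\otimes_\Bbbk e_1A$ as a bimodule summand via the distinguished idempotent $e_g\in e_gAe_g$; since $M_h$ has trivial stabilizer, this is also a summand in $\tilde\X_A$. Hence $M_h\leq_L M_g$, and by symmetry $M_g\leq_L M_h$. The analogous argument on the other side then shows $\S_{11}$ is a single left cell, right cell, $\H$-cell and two-sided cell.

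It remains to separate $\one$ from $\S_{11}$ in the two-sided order, i.e.\ to show $\one$ is never a summand of any composite $H_1\otimes^G M\otimes^G H_2$ for $M\in\S_{11}$ and $H_1,H_2\in\cG_A$. Since $\Theta$ is faithful and sends $\one$ to the $A$-$A$-bimodule $A$, it suffices to prove that $A$ is not a summand of $\Theta(H_1)\otimes_A\Theta(M)\otimes_A\Theta(H_2)$ in $\cC_A$. Expanding $\Theta(H_i)$ into direct sums of $A$ and $A\otimes_\Bbbk A$ factors and using that $\Theta(M)\cong\bigoplus_hAe_{gh}\otimes_\Bbbk e_hA$ is a projective $A$-bimodule, each term of the composite reduces (after absorbing $A\otimes_A-$ factors) to a tensor involving $\Theta(M)$ on at least one side, and is therefore a projective bimodule; but $A$ is not projective as a bimodule because $A$ is basic and (under the standing non-semisimplicity hypothesis implicit in the paper) not separable. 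The main obstacle I anticipate is precisely making this separability step airtight: the statement genuinely needs $A$ to have nontrivial radical, for otherwise $\one\sim_J M$ and the two $\H$-cells would collapse into one. Putting the three steps together yields that $\H_1=\{\one\}$ and $\H_0=\S_{11}$ are the two $\H$-cells of $\cG_A$, each coinciding with a two-sided cell.
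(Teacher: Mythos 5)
Your proof is correct and follows essentially the same route as the paper's: the same composition $(Ae_{gh^{-1}}\otimes_\Bbbk e_1A)\otimes^G(Ae_h\otimes_\Bbbk e_1A)$ exhibiting $Ae_g\otimes_\Bbbk e_1A$ as a direct summand shows that $\S_{11}$ is a single left, right and $\H$-cell, and the same observation that a tensor product with a projective bimodule factor remains projective (hence cannot contain the regular bimodule, $A$ being non-separable) separates $\one$ from $\S_{11}$. Your explicit flagging of the implicit non-semisimplicity hypothesis is a fair point the paper leaves tacit, but otherwise the argument is the same, just with more detail on the enumeration of indecomposables.
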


\begin{proof}
Since tensor products in which one of the factors is a  projective bimodule never contain a copy of the regular bimodule as a direct summand, it follows immediately that the indecomposable $1$-morphism in $\S_1$ is strictly smaller than those in $\S_{11}$ in the left right and two-sided order. This shows the existence of an $\H$-cell $\H_1$, as claimed in \eqref{prop7.2}, which is a left, right and two-sided cell.
To complete the proof, it suffices to show that the indecomposable $1$-morphisms in $\S_{11}$ are in the same left and the same right cell. Let $Ae_g\otimes_{\Bbbk}e_1A$ and $Ae_h\otimes_{\Bbbk}e_1A$ be two representatives of isomorphism classes of indecomposable $1$-morphisms in $\S_{11}$. Then $Ae_g\otimes_{\Bbbk}e_1A$ is a direct sumand of $(Ae_{gh^{\mone}}\otimes_{\Bbbk}e_1A)\otimes_{\tilde{\X}}(Ae_h\otimes_{\Bbbk}e_1A)$, hence $Ae_g\otimes_{\Bbbk}e_1A\leq_L Ae_h\otimes_{\Bbbk}e_1A$. Similarly, $Ae_h\otimes_{\Bbbk}e_1A\leq_L Ae_g\otimes_{\Bbbk}e_1A$ since $Ae_h\otimes_{\Bbbk}e_1A$ is a direct sumand of $(Ae_{hg^{\mone}}\otimes_{\Bbbk}e_1A)\otimes_{\tilde{\X}}(Ae_g\otimes_{\Bbbk}e_1A)$, so they are in the same left cell. Similarly, one sees that they are in the same right, and hence in the same $\H$-cell, which then is also a two-sided cell.
\end{proof}

Recall that $\cH_A$ can be viewed as a $1$-full subbicategory of $\cG_A$ and hence shares its $\H$-cell structure.

\begin{proposition}\label{Hsimple}
$\cG_A$ and $\cH_A$ are $\H_0$-simple.
\end{proposition}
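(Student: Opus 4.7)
I would prove $\H_0$-simplicity of $\cG_A$ first and then deduce it for $\cH_A$ using the $1$-full embedding $\cH_A\hookrightarrow\cG_A$ from Theorem \ref{HAinGA}. For $\cG_A$, let $\mathcal{I}$ be a non-zero $2$-ideal; the goal is to exhibit $F\in\H_0$ with $\id_F\in\mathcal{I}$. Starting from any non-zero $2$-morphism $\alpha\in\mathcal{I}$ and using closure of $\mathcal{I}$ under vertical composition with summand projections and inclusions, I would reduce to $\alpha\colon X\to Y$ with $X,Y$ indecomposable. If one of them is $\one=(A,\pi_1)$, I would horizontally compose with $\id_F$ for some $F\in\H_0$: by Proposition \ref{propmultid2} this gives a non-zero morphism involving $F$, and by Proposition \ref{prop7} each indecomposable summand of the codomain again lies in $\H_0$, so after projecting I may assume $\alpha\colon F_g\to F_h$ with $F_g=Ae_g\otimes e_1 A$ and $F_h=Ae_h\otimes e_1 A$ indecomposable objects of $\H_0$.

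Next, via $\Theta$ (Corollary \ref{thetares}), $\alpha$ corresponds to a non-zero $G$-equivariant bimodule map
\[
\tilde\alpha\colon \bigoplus_{k\in G}Ae_{gk}\otimes e_k A \to \bigoplus_{k\in G}Ae_{hk}\otimes e_k A
\]
in $\cC_A$, with $(k,l)$-component in $e_{hl}Ae_{gk}\otimes_\Bbbk e_kAe_l$. I would fix a pair $(k_0,l_0)$ for which this component is non-zero, write it as $\sum_i x_i\otimes y_i$, and exploit basicness of $A$ to choose elements supported on appropriate primitive idempotents whose left- and right-multiplication sends $\sum_i x_i\otimes y_i$ to a non-zero scalar multiple of $e_m\otimes e_m$ modulo the radical, for a suitable primitive idempotent $e_m$. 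Realising these multiplications as horizontal compositions of $\alpha$ with $\H_0$-morphisms and using Proposition \ref{propmultid2} to recognise the result as an endomorphism of an indecomposable $F\in\H_0$, I would obtain $\beta\in\mathcal{I}\cap\End_{\cG_A}(F)$. Regularity of the $G$-action on the idempotents of $A$ gives $G_F=\{1\}$, so by Lemma \ref{lem0} the quotient $\End_{\cG_A}(F)/\mathrm{Rad}$ is $\Bbbk$; by construction $\beta$ is non-radical, hence invertible, forcing $\id_F\in\mathcal{I}$.

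For $\cH_A$, the same construction has to be carried out using $2$-morphisms of $\cH_A$. By $1$-fullness all $\H_0$-objects already appear in $\cH_A$, and the $2$-morphisms in $\cH_A$ between two such objects are precisely $\Gamma$-images of $A$-module homomorphisms $Ae_g\to Ae_h$, parametrised by $e_hAe_g$. The cutting-down maps needed above are of this form, and pseudofunctoriality of $\Gamma$ (Proposition \ref{Gammapseudo}) guarantees that their horizontal composites stay in $\cH_A$, so the argument yields $\id_F$ already inside the original $\cH_A$-ideal.

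The hardest part will be the combinatorial verification in the middle step: the cutting-down morphisms must be chosen so that the resulting endomorphism is genuinely outside the radical, in spite of the twist built into $-\otimes^G-$ and the interaction with the idempotent $\pi_1$ on $\one$. Freeness of the $G$-action on primitive idempotents is precisely what makes each $(k,l)$-component behave like an ordinary morphism of indecomposable projective $A$-bimodules, and basicness of $A$ then supplies the required non-radical choice.
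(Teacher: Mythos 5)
There is a genuine gap at the step you yourself flag as the hardest one, and the mechanism you sketch for it would not work. Your plan is to reduce to a non-zero $2$-morphism $\alpha\colon F_g\to F_h$ between indecomposables of $\H_0$ and then, by ``left- and right-multiplication'' of the representing element $\sum_i x_i\otimes y_i\in e_{hl}Ae_{gk}\otimes_\Bbbk e_kAe_l$, to produce a non-zero multiple of $e_m\otimes e_m$ modulo the radical, hence a non-radical endomorphism of an indecomposable $F$. But if $g\neq h$ (or more generally whenever the relevant idempotents differ), basicness forces $x_i\in\operatorname{rad}A$, and since $\operatorname{rad}A$ is an ideal, no amount of multiplying by elements of $A$ can make $ax_ib$ congruent to a non-zero multiple of an idempotent modulo $\operatorname{rad}A$. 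Likewise, vertical composition with maps $F_h\to F_g$ only produces endomorphisms represented by elements of $e_gAe_hAe_g\otimes\cdots\subseteq\operatorname{rad}^2A\otimes\cdots$, so you can never reach a non-radical endomorphism of $F$ this way. The identity components one needs do not arise from making the original element non-radical; they arise from a different mechanism: after horizontally composing on \emph{both} sides with identities on projective bimodules, the composite decomposes as a direct sum of copies of a fixed indecomposable projective bimodule indexed by a multiplicity space (such as $e_nAe_g$ and $e_1Ae_p$), and the induced map on that multiplicity space is a plain $\Bbbk$-linear map ($b\mapsto bx$, $d\mapsto yd$), which is non-zero for suitable idempotents whenever $x,y\neq 0$ --- even though $x,y$ may lie in the radical. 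A non-zero linear map between multiplicity spaces has a non-zero matrix entry in suitable bases, and that entry is an identity component on an indecomposable summand (generally a \emph{different} object $Ae_m\otimes_\Bbbk e_qA$, not the $F$ you started from).

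This is exactly what the paper's proof does, in one line: identify $\cG_A$ with its image in $\cC_A$ under $\Theta$, take any non-zero $f\colon M\to N$ in the ideal, and form $\id_{A\otimes_\Bbbk A}\otimes f\otimes\id_{A\otimes_\Bbbk A}$; since $A\otimes_\Bbbk A\otimes_AM\otimes_AA\otimes_\Bbbk A\cong A\otimes_\Bbbk M\otimes_\Bbbk A$ with $M$ now appearing only as a multiplicity space, the composite is governed by the underlying non-zero linear map of $f$ and therefore contains an identity component on a $1$-morphism of $\H_0$. Your preliminary reductions (to indecomposable source and target, and the treatment of $\one=(A,\pi_1)$ via Proposition \ref{propmultid2}) are unnecessary once this is observed, and the first of them has its own small gap: you would need to justify that horizontally composing a non-zero $\alpha$ with $\id_F$ stays non-zero. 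The $\cH_A$ part of your argument inherits the same central gap. To repair your write-up, replace the ``multiply into $e_m\otimes e_m$ modulo the radical'' step by the two-sided horizontal composition with $\id_{A\otimes_\Bbbk A}$ and the multiplicity-space argument above.
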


\proof
Assume $\cI$ is a nonzero $2$-ideal in $\cG_A$, identified with a subbicategory of $\cC_A$ under $\Theta$, and let $f\colon M\to N$ be a morphism in $\cI(\bullet, \bullet)$. Note that $A\otimes_\Bbbk A$ is a $1$-morphism in the essential image of $\cG_A$ under $\Theta$.  Then $\id_{A\otimes_\Bbbk A} \otimes f \otimes \id_{A\otimes_\Bbbk A}$ contains an identity component on a $1$-morphism in $\cG_A$, as required.

The proof for $\cH_A$ is analogous.
\endproof

\begin{proposition}\label{prop7.5}
The bicategory $\widetilde{\cG_{A}}$ has two two-sided cells, namely

\begin{enumerate}[$($a$)$]
\item\label{prop7.51} $\mathcal{J}_1$
consisting of one element $(\mathbbm{1}_{\bullet})$;
\item\label{prop7.52}
the two-sided cell $\widetilde{\mathcal{J}}_{0}$ consisting of all isomorphism classes of indecomposable
$1$-morphisms in $\S_{11} \cup\S_{01}\cup
  \S_{10}\cup\S_{00}$.
Furthermore, the two-sided cell $\widetilde{\mathcal{J}}_{0}$ consists of two left cells $\L_0 = \S_{10}\cup\S_{00}$ and $\L_1=\S_{11} \cup\S_{01}$ and two right cells $\R_0 = \S_{01}\cup
  \S_{00}$ and $\R_1 =\S_{11} \cup\S_{10} $. Hence, the $\H$-cells are $\H_{ij}=\S_{ij}$ for $i,j\in\{0,1\}$.
\end{enumerate}
\end{proposition}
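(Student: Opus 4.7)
My strategy combines two ingredients: first, the bicategorical observation that horizontal composition preserves source and target, so any $\leq_L$-relation forces equal source and any $\leq_R$-relation forces equal target; second, a few explicit computations with $\otimes^G$ mirroring the argument in the proof of Proposition~\ref{prop7}.

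I would begin with part \eqref{prop7.51}. As in the proof of Proposition~\ref{prop7}, any composite $M\otimes^G N$ of indecomposables chosen from $\S_{11}\cup\S_{01}\cup\S_{10}\cup\S_{00}$ decomposes, by Lemma~\ref{projBaction}, into a direct sum of ``projective'' bimodules of one of the four shapes $Af\otimes_\Bbbk f'A$, $A_0 f\otimes_\Bbbk f'A_0$, $Af\otimes_\Bbbk f'A_0$ or $A_0 f\otimes_\Bbbk f'A$, with $f,f'$ primitive idempotents of $\hat A$. Since $A$ is basic and not semisimple, the regular bimodule $A$ is not projective over $A\otimes_\Bbbk A^{\op}$, and hence $(A,\pi_1)$ cannot appear as a direct summand of any such composite. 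Combined with the unitor isomorphism $\mathbbm{1}_\bullet\otimes^G F\cong F$ for every $F$ of source $\bullet$, this shows that $\mathbbm{1}_\bullet$ is strictly $\leq_J$-smaller than every indecomposable in $\widetilde{\mathcal{J}}_0$, so it forms a singleton left, right, $\H$-, and two-sided cell.

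For part \eqref{prop7.52}, the source/target obstruction immediately gives
\[ \L_0\subseteq\S_{10}\cup\S_{00},\ \L_1\subseteq\S_{11}\cup\S_{01},\ \R_0\subseteq\S_{01}\cup\S_{00},\ \R_1\subseteq\S_{11}\cup\S_{10}. \]
To promote these to equalities, I would run the argument from the proof of Proposition~\ref{prop7} inside each individual $\S_{ij}$ (using $\bar e_k A_0\bar e_k=\Bbbk\bar e_k$ and the decomposition $e_k Ae_k=\Bbbk e_k\oplus\rad(e_k Ae_k)$), and then bridge between rows and columns through off-diagonal computations such as
\[ (A_0\bar e_1\otimes_\Bbbk e_1 A)\otimes^G(Ae_g\otimes_\Bbbk e_1 A)=\bigoplus_{k\in G} A_0\bar e_k\otimes_\Bbbk e_k Ae_g\otimes_\Bbbk e_1 A, \]
whose $k=g$ summand contains $A_0\bar e_g\otimes_\Bbbk e_1 A\in\S_{01}$ as a direct summand via the splitting $e_g Ae_g=\Bbbk e_g\oplus\rad(e_gAe_g)$, and its companion
\[ (Ae_1\otimes_\Bbbk\bar e_1 A_0)\otimes^G(A_0\bar e_g\otimes_\Bbbk e_1 A)\cong Ae_g\otimes_\Bbbk e_1 A\in\S_{11}, \]
where the sum collapses through $\bar e_k A_0\bar e_g=\delta_{kg}\Bbbk\bar e_g$. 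Together these place $\S_{01}$ and $\S_{11}$ in the common left cell $\L_1$; analogous mirror-image computations handle $\S_{10}$ and $\S_{00}$ for $\L_0$ and yield the two right cells. The two-sided cell $\widetilde{\mathcal{J}}_0$ is then the union of the two left cells, and the $\H$-cells emerge as $\H_{ij}=\L_i\cap\R_j=\S_{ij}$.

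The main hurdle is the bookkeeping in $\otimes^G=\bigoplus_{k\in G}(-)^k\otimes_{\hat A}(-)$: one must isolate the correct $G$-index producing each required summand, and then verify that the resulting idempotent in $\End_{\tilde\X_{\hat A}}$ of the composite genuinely splits off the intended indecomposable factor. This rests on Lemma~\ref{lem0} (identifying the head of the endomorphism algebra of an indecomposable as the group algebra of its stabiliser), combined with the regularity of the $G$-action on the idempotents of $\hat A$, which ensures that each projective bimodule summand produced is itself indecomposable in $\tilde\X_{\hat A}$.
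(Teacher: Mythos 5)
Your proposal is correct and follows essentially the same route as the paper: part (a) rests on the observation that composites involving projective $\hat A$-$\hat A$-bimodules never produce the regular bimodule as a summand, and part (b) splits off $A_1\hat e_g\otimes_\Bbbk \hat e_1 A_2$ from a composite via the idempotent $\hat e_h$ (your bridging computations are instances of the paper's single uniform formula with $A_3\in\{A,A_0\}$), with the distinctness of the two left (resp.\ right) cells coming from the fact that composition preserves the outer tensor factor, which is your source/target observation in different words.
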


\proof
Part \eqref{prop7.51} follows as in Proposition \ref{prop7} \eqref{prop7.1}, since tensor products involving projective $A\times A_0$ bimodules never contain direct summands isomorphic to the regular $A$-$A$-bimodule.

To prove part \eqref{prop7.52}, let $A_i\in \{A,A_0\}$ for $i=1,\cdots, 3$, and let $\hat e_h\in \{e_h,\bar e_h\}$, for $h\in G$, as appropriate. Then $A_1\hat e_g\otimes_{\Bbbk}\hat e_1A_2$ is a direct summand of $(A_1\hat e_{gh^{\mone}}\otimes_{\Bbbk}\hat e_1A_3)\otimes_{\tilde{\Y}}(A_3\hat e_h\otimes_{\Bbbk}\hat e_1A_2)$, so $A_1\hat e_g\otimes_{\Bbbk}\hat e_1A_2\leq_L A_3\hat e_h\otimes_{\Bbbk}\hat e_1A_2$ for any $A_3$. This shows that all $1$-morphisms in $\S_{10}\cup\S_{00}$ are in the same left cell and all $1$-morphisms in $\S_{11} \cup\S_{01}$ are in the same left cell. Given that tensoring $A_1\hat e_g\otimes_{\Bbbk}\hat e_1A_2$ on the left does not change $A_2$, it is clear that $\L_0 = \S_{10}\cup\S_{00}$ and $\L_1=\S_{11} \cup\S_{01}$ are two different left cells. Similarly, ones proves the statement about right cells. The statement about $\H$-cells and the two-sided cell $\widetilde{\J}_0$ follows immediately.
\endproof

\begin{lemma}\label{vecGequiv}
The endomorphism bicategory $\widetilde{\cG_{A}}_\ast$ of the object $\ast$ in $\widetilde{\cG_{A}}$ has a unique left, right and two-sided cell $\H_{00}$ and is biequivalent to $\cV ec_G$.
\end{lemma}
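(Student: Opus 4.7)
The plan is to identify the endomorphism bicategory $\widetilde{\cG_A}_\ast$ explicitly and exhibit a biequivalence with $\cV ec_G$, from which both assertions of the lemma follow. I would first observe that the identity $1$-morphism $\one_\ast = \Bbbk\bar{e}_1 \otimes_\Bbbk \bar{e}_1\Bbbk$ coincides with $M_1 := A_0\bar{e}_1 \otimes_\Bbbk \bar{e}_1 A_0 \in \S_{00}$ under the identification $A_0 = \bigoplus_{g\in G}\Bbbk\bar{e}_g$, and that by the $(\ast,\ast)$-part of Lemma \ref{ind1morph} the full list of indecomposable $1$-morphisms of $\widetilde{\cG_A}_\ast$ is $\{M_g := A_0\bar{e}_g \otimes_\Bbbk \bar{e}_1 A_0 \mid g\in G\}$.

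Next, I would carry out the central computation of horizontal composition. For $g,h\in G$, unwinding the definition of $\otimes^G$ and applying Lemma \ref{projBaction} gives
\[
M_g \otimes^G M_h \;=\; \bigoplus_{k\in G} M_g^k \otimes_{A_0} M_h \;\cong\; \bigoplus_{k\in G} A_0\bar{e}_{gk} \otimes_\Bbbk \bar{e}_k A_0 \bar{e}_h \otimes_\Bbbk \bar{e}_1 A_0.
\]
Since $\bar{e}_k A_0 \bar{e}_h = \delta_{k,h}\Bbbk$, only the summand $k=h$ survives and we obtain $M_g \otimes^G M_h \cong M_{gh}$. In particular, horizontal composition on the indexing set $G$ realises the group multiplication, and $\one_\ast \cong M_1$ acts as the unit (consistently with Lemma \ref{A0id}). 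A parallel calculation using
\[
\Hom_{\X_{\hat A}}(M_g, M_h) \;=\; \bigoplus_{k\in G} \Hom_{\hat A\text{-}\hat A}(M_g, M_h^k) \;\cong\; \bigoplus_{k\in G} \bar{e}_g A_0 \bar{e}_{hk} \otimes_\Bbbk \bar{e}_k A_0 \bar{e}_1
\]
together with $A_0 = \bigoplus_g \Bbbk\bar{e}_g$ shows that this Hom space is one-dimensional when $g=h$ and zero otherwise.

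With these data, I would define a pseudofunctor $F\colon \cV ec_G \to \widetilde{\cG_A}_\ast$ by sending the simple graded line $\Bbbk_g$ to $M_g$, extending additively to all $G$-graded vector spaces, and equipping it with the tensorator provided by the isomorphisms $M_g \otimes^G M_h \cong M_{gh}$ from the previous step. Essential surjectivity on $1$-morphisms and full faithfulness on $2$-morphisms follow directly from the preceding computations, so $F$ is a biequivalence. The cell statement is then immediate: since $\one_\ast \cong M_1 \in \S_{00} = \H_{00}$ and every indecomposable $1$-morphism of $\widetilde{\cG_A}_\ast$ lies in $\S_{00}$, there is a single left, right and two-sided cell, which is also the unique $\H$-cell $\H_{00}$.

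The main obstacle I expect is verifying the coherence (pentagon and triangle) axioms for $F$, that is, checking that the composition isomorphisms $M_g \otimes^G M_h \cong M_{gh}$ are compatible with the associators and unitors inherited from $\tilde{\X}_{\hat A}$ on the target and the strict ones of $\cV ec_G$ on the source. However, this reduces to a routine diagram chase using the canonical tensor-hom isomorphisms of bimodules fixed in Section \ref{bimodconv} together with Lemma \ref{A0id}, since all the maps involved are built from identifications of idempotent-indexed summands.
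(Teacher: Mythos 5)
Your proposal is correct and follows essentially the same route as the paper: the key step in both is the fusion-rule computation $M_g\otimes^G M_h\cong M_{gh}$ together with the observation that the category is semisimple with simples indexed by $G$ and that the associator, being inherited from the canonical bimodule associator, corresponds to the trivial $3$-cocycle. The only cosmetic difference is that the paper concludes by citing the classification of pointed fusion categories (\cite[Proposition 4.10.3]{EGNO}) rather than writing down the pseudofunctor $F$ explicitly, and the coherence check you defer to a ``routine diagram chase'' is precisely the triviality of that cocycle, which the paper likewise asserts without detailed computation.
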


\proof
Given that the indecomposable $1$-morphisms in $\widetilde{\cG_{A}}_\ast$ are precisely those in $\H_{00}$, the first statement follows from Lemma \ref{prop7.5}. Now 
\begin{equation*}\begin{split}
(A_0\bar e_{g}\otimes_{\Bbbk}\bar e_1A_0)\otimes^{G}(A_0\bar e_h\otimes_{\Bbbk}\bar e_1A_0)&=\bigoplus_{k\in G}(A_0\bar e_{gk}\otimes_{\Bbbk}\bar e_kA_0)\otimes_A(A_0\bar e_h\otimes_{\Bbbk}\bar e_1A_0)
\\ & \cong A_0\bar e_{gh}\otimes_{\Bbbk}\bar e_1A_0.
\end{split}\end{equation*}
Therefore $\widetilde{\cG_{A}}_\ast$ is a semi-simple bicategory with one object, which decategorifies to $\mathbb{Z}G$. Moreover, the associator is induced by the associator on bimodules given in  Section \ref{bimodconv} which corresponds to the trivial $3$-cocycle and thus $\widetilde{\cG_{A}}_\ast$ is biequivalent to $\cV ec_G$, c.f. \cite[Proposition 4.10.3]{EGNO}. 
\endproof

\subsection{Adjunctions}\label{s3.6}

We keep the setup of Section \ref{s3.3} and additionally assume that $A$ 
is self-injective.
We denote by $\nu$ the bijection on $\mathtt{E}$
which is induced by the Nakayama automorphism of $A$ given by
\begin{displaymath}
\mathrm{Hom}_{\Bbbk}(eA,\Bbbk)\cong A\nu(e), \quad\text{ for } e\in  \mathtt{E}.
\end{displaymath}
Note that given our labelling of idempotents by elements of $G$, this induces and action of $\nu$ on $G$ and we define $\nu(g)$ by $\nu(e_g)=e_{\nu(g)}$.

\begin{lemma}\label{nucompat}
For any $g\in G$, we have $\nu(g)=\nu(1)g$. 
\end{lemma}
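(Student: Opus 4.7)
The plan is to prove the stronger identity $\nu(gh) = \nu(g)\, h$ for all $g, h \in G$, from which the lemma follows by setting $g = 1_G$. This identity expresses the $G$-equivariance of the Nakayama permutation, and is forced by the fact that $G$ acts on $A$ by algebra automorphisms, so that $\Bbbk$-linear duality commutes with the $G$-twist.

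First, I would restrict the bimodule twist $(-)^h$ from Section~\ref{groupactions} to one-sided modules. The same computation as in Lemma~\ref{actiononproj}, applied on a single factor, yields
\[
(e_g A)^h \cong e_{gh} A \quad \text{as right } A\text{-modules}, \qquad (A e_g)^h \cong A e_{gh} \quad \text{as left } A\text{-modules}.
\]

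Next, I would verify that $\Bbbk$-duality intertwines the twist: for any right $A$-module $M$, there is a natural isomorphism of left $A$-modules $\Hom_\Bbbk(M, \Bbbk)^h \cong \Hom_\Bbbk(M^h, \Bbbk)$, since on both sides the left action is given by $(a \cdot f)(m) = f(m \cdot h(a))$. Combined with the defining isomorphism $\Hom_\Bbbk(e_g A, \Bbbk) \cong A e_{\nu(g)}$, this gives the chain
\[
A e_{\nu(gh)} \cong \Hom_\Bbbk(e_{gh} A, \Bbbk) \cong \Hom_\Bbbk((e_g A)^h, \Bbbk) \cong \Hom_\Bbbk(e_g A, \Bbbk)^h \cong (A e_{\nu(g)})^h \cong A e_{\nu(g)\, h}.
\]
Since an indecomposable projective left $A$-module uniquely determines its associated primitive idempotent, we conclude $\nu(gh) = \nu(g)\, h$; specialising $g = 1_G$ gives the lemma.

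The only real subtlety is bookkeeping of twist conventions: one must check that the restriction of $(-)^h$ from bimodules to one-sided modules produces the formulas displayed above, and that the natural identification $\Hom_\Bbbk(M, \Bbbk)^h = \Hom_\Bbbk(M^h, \Bbbk)$ truly holds on the nose rather than merely up to an inner twist. Both reduce to one-line verifications straight from the definitions.
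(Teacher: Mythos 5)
Your proof is correct and rests on the same underlying fact as the paper's: that the twist $(-)^h$ commutes with $\Bbbk$-duality, so that the injective $Ae_{\nu(1)g}\cong(Ae_{\nu(1)})^g\cong\Hom_\Bbbk(e_1A,\Bbbk)^g$ must be $Ae_{\nu(g)}$. The only difference is in execution: the paper identifies the twisted dual by computing its socle explicitly, whereas you package the same verification as the natural isomorphism $\Hom_\Bbbk(M,\Bbbk)^h\cong\Hom_\Bbbk(M^h,\Bbbk)$ together with $(e_gA)^h\cong e_{gh}A$ and $(Ae_g)^h\cong Ae_{gh}$ (both consistent with the conventions of Lemma~\ref{actiononproj}); note also that your ``stronger'' identity $\nu(gh)=\nu(g)h$ is in fact equivalent to the stated one.
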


\proof
By definition, $Ae_{\nu(1)g} \cong (Ae_{\nu(1)})^{g}\cong \Hom_\Bbbk(e_1A, \Bbbk)^{g} $ is the indecomposable injective $A$-module whose socle element $e_1^*$ satisfies
$e_h\cdot (e_1)^*  =g(e_h)(e_1)^* = e_{hg^{\mone}} (e_1)^*\neq 0$ if and only if $g=h$. 
It is hence isomorphic to the injective $A$-module with socle $(e_g)^*$, which is $Ae_{\nu(g)}$.
\endproof


\begin{proposition}\label{prop61}
We have adjunctions
\begin{enumerate}[$($a$)$]
\item\label{prop61.1}
$(Ae_g\otimes_{\Bbbk}e_1A, Ae_{\nu(1)g^{\mone}}\otimes_{\Bbbk}e_1A)$;
\item\label{prop61.2}
$(A_0\bar e_g\otimes_{\Bbbk}e_1A, Ae_{\nu(1)g^{\mone}}\otimes_{\Bbbk}\bar e_1A_0)$;
\item\label{prop61.3}
$(Ae_g\otimes_{\Bbbk}\bar e_1A_0, A_0\bar e_{g^{\mone}}\otimes_{\Bbbk}e_1A)$;
\item\label{prop61.4}
$(A_0\bar e_g\otimes_{\Bbbk}\bar e_1A_0, A_0\bar e_{g^{\mone}}\otimes_{\Bbbk}\bar e_1A_0)$.
\end{enumerate}
\end{proposition}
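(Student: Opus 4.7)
My plan is to prove the four adjunctions in parallel by reducing each to the standard adjunction for projective bimodules over a self-injective algebra, and then transporting through a group twist. The key fact is that in the bicategory of $\hat A$-$\hat A$-bimodules (with $\hat A = A \times A_0$), for any primitive idempotents $\hat e, \hat f$ of $\hat A$, one has the adjunction
$$\hat A\hat e \otimes_\Bbbk \hat f \hat A \;\dashv\; \hat A\nu_{\hat A}(\hat f) \otimes_\Bbbk \hat e \hat A,$$
where $\nu_{\hat A}$ is the Nakayama permutation of $\hat A$, which coincides with $\nu$ on the primitive idempotents of $A$ and with the identity on those of $A_0$ (since $A_0$ is semisimple). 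The counit is the socle pairing $\hat f \hat A \nu_{\hat A}(\hat f) \to \Bbbk$ composed with the multiplication $\hat A \hat e \otimes_\Bbbk \hat e \hat A \to \hat A$; the unit is dual.

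First I would verify that the right adjoint stated in the proposition is, in each of (a)--(d), obtained from the standard right adjoint above by the group twist $(-)^{g^{\mone}}$, using Lemma \ref{actiononproj} and Lemma \ref{projBaction}. For instance, in case (a), $\bigl(A e_{\nu(1)} \otimes_\Bbbk e_g A\bigr)^{g^{\mone}} \cong Ae_{\nu(1)g^{\mone}} \otimes_\Bbbk e_1 A$, which matches the stated right adjoint (using Lemma \ref{nucompat} to identify labels where needed); cases (b) and (c) mix $A$- and $A_0$-projectives, and in (d) the Nakayama acts trivially, but the same twist pattern applies. Since $(-)^{g^{\mone}}$ is realised as a genuine isomorphism in $\tilde\X_{\hat A}$ (namely the tuple with identity morphism in the $g^{\mone}$-component), the standard bimodule adjunction transports to an adjunction between the stated pair.

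Next I would write down the explicit counit and unit inside $\widetilde{\cG_A}$. For (a), the counit
$$\varepsilon\colon (Ae_g \otimes_\Bbbk e_1 A) \otimes^G (Ae_{\nu(1)g^{\mone}} \otimes_\Bbbk e_1 A) \longrightarrow (A,\pi_1)$$
is read off from the expansion $\bigoplus_{k\in G} Ae_{gk}\otimes_\Bbbk e_k A e_{\nu(1)g^{\mone}} \otimes_\Bbbk e_1 A$: only the summand $k = \nu(1)g^{\mone}$ pairs non-trivially via the socle of $Ae_{\nu(1)g^{\mone}}$, and the resulting standard counit, projected onto the $\pi_1$-summand of $A$, produces a 2-morphism into $(A,\pi_1)$ in $\tilde\X_A$. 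The unit is dual. Triangle identities reduce, summand by summand using bifunctoriality of $\otimes^G$ (Lemma \ref{lem2}), to those of the standard bimodule adjunction. Cases (b), (c), (d) follow the same template, with the identity 1-morphism on $\ast$ (supplied by Lemma \ref{A0id}) replacing $(A, \pi_1)$ where appropriate; in (b) and (c) one tensor factor is an $A_0$-projective, and the corresponding socle pairing becomes the identity on $\Bbbk$, but the combinatorics of idempotent labels is unchanged.

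The main obstacle will be the bookkeeping: keeping track of the interaction between $\nu$ and the group twists $(-)^h$, ensuring all idempotent subscripts align under the isomorphisms of Lemmas \ref{actiononproj} and \ref{projBaction}, and correctly handling the projection onto the $\pi_1$-summand (or its analogue on $\ast$). Once this is done, the triangle identities follow formally from those of the standard projective bimodule adjunction in the ambient bicategory of $\hat A$-$\hat A$-bimodules.
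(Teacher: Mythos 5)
Your strategy is essentially correct and lands on the right adjoints in all four cases, but it takes a genuinely more computational route than the paper. The paper's proof invokes \cite[Theorem 4.3]{CM}: the identification (realised here by $\Theta$, cf.\ Lemmas \ref{thetapseudo} and \ref{thetaimage}) of the relevant part of $\tilde{\X}_{\hat A}$, with composition $\otimes^G$ and unit $(A,\pi_1)$, with the $G$-equivariant projective $\hat A$-$\hat A$-bimodules under $\otimes_{\hat A}$. Since adjunctions transport along such an identification, the paper only has to compute the standard adjoint of $\bigoplus_{h}Ae_{gh}\otimes_\Bbbk e_hA$, re-index it as $\bigoplus_hAe_{\nu(g^{\mone}h)}\otimes_\Bbbk e_hA$, and recognise this as the image of $Ae_{\nu(1)g^{\mone}}\otimes_\Bbbk e_1A$ using Lemma \ref{nucompat}; no unit, counit or triangle identity is ever written down. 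You instead construct these by hand inside $\widetilde{\cG_A}$, which is where all the work sits, and two points there need care. First, the ``transport through the group twist'' in your opening paragraph does not by itself give the adjunction: $M\cong M^{g^{\mone}}$ is an isomorphism of objects of $\tilde{\X}_{\hat A}$, but the standard adjunction lives in $\cB_{\hat A}$ with composition $\otimes_{\hat A}$ and unit $\hat A$, whereas the claim concerns $\otimes^G$ and the unit $(A,\pi_1)$; so that paragraph only identifies the candidate adjoint, and your explicit unit/counit verification is the actual proof. Second, a bookkeeping slip in case (a): in the expansion $\bigoplus_{k\in G}Ae_{gk}\otimes_\Bbbk e_kAe_{\nu(1)g^{\mone}}\otimes_\Bbbk e_1A$, the summand feeding the component of the counit that lands in the untwisted copy of $A$ is $k=g^{\mone}$ (so that the outer factors multiply as $Ae_1\otimes_\Bbbk e_1A\to A$ and the middle factor $e_{g^{\mone}}Ae_{\nu(g^{\mone})}$ carries the nondegenerate trace pairing), not $k=\nu(1)g^{\mone}$; the remaining summands contribute to the components valued in the nontrivial twists $A^s$ and must all be tracked before composing with $\pi_1$. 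Both issues are repairable, but the equivariantisation argument of the paper avoids them entirely.
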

\begin{proof}
By \cite[Theorem 4.3]{CM}, there is an equivalence of categories between the projective objects in $\tilde{\Y}$ and the $G$-invariant objects in the category of projective ${\hat A}$-${\hat A}$-bimodules. Under this equivalence $Ae_g\otimes_{\Bbbk}e_1A$ corresponds to $\bigoplus_{h \in G}Ae_{gh}\otimes_{\Bbbk}e_hA$. Hence its adjoint corresponds to the adjoint of $\bigoplus_{h \in G}Ae_{gh}\otimes_{\Bbbk}e_hA$, which is given by 
$\bigoplus_{h \in H}Ae_{\nu(h)}\otimes_{\Bbbk}e_{gh}A \cong \bigoplus_{h \in H}Ae_{\nu(g^{\mone}h)}\otimes_{\Bbbk}e_{h}A $. Under the equivalence, this corresponds to $Ae_{\nu(g^{\mone})}\otimes_{\Bbbk}e_{1}A$ which is isomorphic to $Ae_{\nu(1)g^{\mone}}\otimes_{\Bbbk}e_{1}A$ by Lemma \ref{nucompat}. The other adjunctions are checked similarly.
\end{proof}

\subsection{Simple transitive birepresentations of $\cG_A$}\label{sec:freetransitive}

We keep the notation and assumptions from Section \ref{s3.6}. In particular, we assume that $A$ is basic, self-injective has a fixed complete $G$-invariant
set $\mathtt{E}$ of primitive idempotents on which $G$ acts regularly.
It follows immediately from Proposition \ref{prop61} that, under these assumptions, $\cG_A$ and $\widetilde{\cG_A}$ are quasi fiab, and fiab if and only if $A$ is weakly symmetric.

In order to classify simple transitive birepresentations for $\cG_A$, we will need a slight generalisation of \cite[Theorem 15]{MMMZ}. 

\begin{theorem}\label{mmmzgen}
Let $\cC$ be a quasi multifiab bicategory, $\J$ a maximal two-sided cell in $\cC$ and $\H$ an $\H$-cell in $\J$ such that $\H^*=\H$. Let $\ti$ be such that, for all $\rF\in \H$, we have $\rF\in \cC({\ti,\ti})$ and let $\cC_\H$ be the $2$-full subbicategory of $\cC$ on object $\ti$ with $1$-morphisms in $\add\{\mathbbm{1}_\ti, \rF\,\vert \;\rF\in \H\}$. Then there is a bijection between simple transitive birepresentations of $ \cC$ with apex $\J$ and those of $\cC_\H$ with apex $\H$.
\end{theorem}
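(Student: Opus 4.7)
The plan is to follow the proof strategy of \cite[Theorem 15]{MMMZ} and adapt it from the fiat $2$-categorical setting to the quasi multifiab bicategorical setting, with the extra bookkeeping needed to track the chosen object $\ti$ and to replace strict associativity by coherent associators/unitors.

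First I would set up the restriction direction. Given a simple transitive birepresentation $\bfM$ of $\cC$ with apex $\J$, the apex condition ensures that $\bfM$ does not annihilate any $\rF \in \H$, so there exists an object $\ti$ in $\cC$ on which the $1$-morphisms of $\H$ act nontrivially — by our hypothesis this is the chosen $\ti$. Restricting $\bfM$ along the inclusion $\cC_\H \hookrightarrow \cC$ gives a birepresentation of $\cC_\H$ on the finitary category $\bfM(\ti)$. I would then show, using standard cell-theoretic arguments (choosing a suitable nonzero object in $\bfM(\ti)$ and considering the $\cC_\H$-stable additive subcategory it generates), that this restriction contains a unique simple transitive subquotient with apex $\H$; the self-duality condition $\H^* = \H$ is used here to ensure that left and right adjoints of $1$-morphisms in $\H$ remain inside $\cC_\H$, so that the notion of $\cC_\H$-stable ideal behaves well.

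Second I would construct the induction direction. Given a simple transitive birepresentation $\bfN$ of $\cC_\H$ with apex $\H$, define the induced birepresentation $\bfInd_{\cC_\H}^\cC \bfN$ on $\cC$ in the standard way: on each object $\tj$, take the additive closure of the images of $\bfN$ under the hom bicategory acting from $\ti$ to $\tj$, with the obvious $\cC$-action by horizontal composition precomposed with the coherence isomorphisms. Here the coherence data of $\cC$ (associators and unitors) must be transported carefully, which is one of the points where the argument deviates from the strict $2$-category case. Then I would show that this induced birepresentation has a unique maximal $\cC$-stable ideal and hence a unique simple transitive quotient with apex $\J$; this uses that $\J$ is maximal, so elements outside $\J$ either act as zero or factor through $\H$-cell parts which are already captured by $\bfN$.

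Finally, I would verify that restriction followed by induction returns the original simple transitive birepresentation of $\cC_\H$ up to equivalence, and that induction followed by restriction returns the original one for $\cC$, thus giving the bijection. The main technical obstacle is the careful handling of the induction construction in the bicategorical multi-object setting — in particular making sure that the $\cC$-action descends to the simple transitive quotient and that the coherence isomorphisms are compatible with the quotient — but this is essentially a bookkeeping exercise that follows the template of \cite[Theorem 15]{MMMZ} with the adjunctions guaranteed by the quasi multifiab hypothesis and the condition $\H^* = \H$.
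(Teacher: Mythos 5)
Your proposal follows essentially the same route as the paper: both reduce to the known $\H$-cell reduction theorem and observe that the only role of the fiab hypothesis in the existing argument is to guarantee $\H^*=\H$, which is here taken as a hypothesis precisely so that adjoints of $1$-morphisms in $\H$ remain inside $\cC_\H$ and the restriction/induction machinery goes through unchanged. The only difference is one of economy: the paper invokes the already-bicategorical \cite[Theorem 4.32]{MMMTZ2}, thereby avoiding the coherence bookkeeping you propose to redo when adapting the strict $2$-categorical \cite[Theorem 15]{MMMZ}.
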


\proof
The proof is analogous to that of Theorem 4.32 in \cite{MMMTZ2}, noting that the only place where $\cC$ being fiab is crucially used there is to obtain $\H=\H^\star$.
\endproof

We thus obtain the analogous classification of simple transitive birepresentations to the one given in \cite{MMZ2} in the case where $G$ is abelian.
 
\begin{theorem}\label{thm11}
We retain the above assumptions on $A$. Let $\bfM$ be a simple transitive birepresentation of $\cG_A$.
\begin{enumerate}
\item If the apex of $\bfM$ is $\J_{1}$, then $\bfM$ is the trivial birepresentation associated to $\mathbbm{1}_\bullet$, meaning $\bfM(\bullet)$ is equivalent to $\Bbbk\lmod$, the identity $1$-morphism $(A_1, {\pi}_1)$ acts as the identity functor, and all other indecomposable $1$-morphisms annihilate.
\item If the apex of $\bfM$ is $\J_{0}$, there is a natural bijection between
equivalence classes of simple transitive birepresentations of $\cG_{A}$ with apex $\mathcal{J}_{0}$
and pairs $(K,\omega)$, where $K$ is a subgroup of $G$ and $\omega\in H^2(K,\Bbbk^*)$.
\end{enumerate}
\end{theorem}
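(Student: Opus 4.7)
For part (1), if $\bfM$ has apex $\J_1=\{\mathbbm{1}_\bullet\}$, then by definition all indecomposable $1$-morphisms in $\S_{11}$ act as zero, so $\bfM$ factors through the $2$-full subbicategory on $\mathbbm{1}_\bullet$. Simple transitivity then forces $\bfM(\bullet)\simeq\Bbbk\lmod$ with $(A,\pi_1)$ acting as the identity functor and all other indecomposable $1$-morphisms annihilating, which is exactly the trivial birepresentation.

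For part (2), the plan is to use the bicategory $\widetilde{\cG_A}$ of Section \ref{s3.3} as a bridge between $\cG_A$ and $\cV ec_G$. By Proposition \ref{prop7.5}, the two-sided cell $\widetilde{\J}_0$ contains two diagonal $\H$-cells, $\H_{11}=\S_{11}$ inside $\widetilde{\cG_A}_\bullet$ and $\H_{00}=\S_{00}$ inside $\widetilde{\cG_A}_\ast$. Both are closed under adjunction by Proposition \ref{prop61}(a),(d), since $g\mapsto \nu(1)g^{\mone}$ and $g\mapsto g^{\mone}$ are bijections on $G$; hence $\H_{11}^\ast=\H_{11}$ and $\H_{00}^\ast=\H_{00}$. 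Moreover, by Lemma \ref{ind1morph} the $2$-full subbicategory of $\widetilde{\cG_A}$ on object $\bullet$ with $1$-morphisms in $\add(\{\mathbbm{1}_\bullet\}\cup\H_{11})$ is precisely $\cG_A$, while $(\widetilde{\cG_A})_{\H_{00}}=\widetilde{\cG_A}_\ast$.

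Applying Theorem \ref{mmmzgen} to $\widetilde{\cG_A}$ twice, once at object $\bullet$ with $\H$-cell $\H_{11}$ and once at object $\ast$ with $\H$-cell $\H_{00}$, produces two natural bijections whose common source is the set of equivalence classes of simple transitive birepresentations of $\widetilde{\cG_A}$ with apex $\widetilde{\J}_0$. Composing them identifies simple transitive birepresentations of $\cG_A$ with apex $\J_0$ with those of $\widetilde{\cG_A}_\ast$ with apex $\H_{00}$. By Lemma \ref{vecGequiv}, $\widetilde{\cG_A}_\ast\simeq\cV ec_G$, and Ostrik's classical classification (see \cite[Example 9.7.2]{EGNO}) identifies simple transitive birepresentations of $\cV ec_G$ (all of which have apex $\H_{00}$ since $\cV ec_G$ has only one non-identity $\H$-cell) with pairs $(K,\omega)$ consisting of a subgroup $K\leq G$ and a class $\omega\in H^2(K,\Bbbk^\ast)$. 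Chaining these bijections yields the stated classification.

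The main obstacle is verifying that Theorem \ref{mmmzgen} applies to $\widetilde{\cG_A}$ at each of the two diagonal $\H$-cells. Concretely one must check that $\widetilde{\cG_A}$ is quasi multifiab, which follows from Proposition \ref{prop61} (each of the indecomposable $1$-morphisms in all four ``cross'' $\H$-cells has both a left and a right adjoint within $\widetilde{\cG_A}$), and that $\widetilde{\J}_0$ is maximal, which is immediate from Proposition \ref{prop7.5}. Self-adjointness of the diagonal $\H$-cells is the remaining hypothesis, handled above. Finally, under the biequivalence of Lemma \ref{vecGequiv}, the unique non-identity $\H$-cell of $\cV ec_G$ corresponds to $\H_{00}$, so no information about the apex is lost in the passage to $\cV ec_G$.
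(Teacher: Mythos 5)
Your proposal is correct and follows essentially the same route as the paper: part (1) via the observation that the $\J_1$-simple quotient is trivial, and part (2) via a double application of the $\H$-cell reduction theorem (Theorem \ref{mmmzgen}) to $\widetilde{\cG_A}$ at the diagonal $\H$-cells $\H_{11}$ and $\H_{00}$, followed by Lemma \ref{vecGequiv} and Ostrik's classification of module categories over $\cV ec_G$. Your explicit verification of the hypotheses of Theorem \ref{mmmzgen} (self-adjointness of the diagonal $\H$-cells via Proposition \ref{prop61}) is a welcome addition but does not change the argument.
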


\begin{proof}
For $\mathcal{J}=\mathcal{J}_{1}$, the statement is immediate, as the $\J_{1}$-simple quotient of $\cG_{A}$ is biequivalent to $\cC_\Bbbk$.

For $\mathcal{J}=\mathcal{J}_0$, consider $\widetilde{\cG_A}$. Then we can realize $\cG_{A}$ as a
both $1$- and $2$-full subbicategory of $\widetilde{\cG_A}$ given by the endomorphism category $\widetilde{\cG_A}_\bullet$ of $\bullet$. Moreover, $\J_0$ corresponds to $\H_{11}$ under this identification. By Theorem \ref{mmmzgen}
there is a bijection between equivalence classes of simple transitive birepresentations of
$\cG_A$ with apex $\J_0$, and equivalence classes of simple transitive birepresentations of $\widetilde{\cG_A}$
with apex $\widetilde{\J}_0$.

On the other hand, again, by Theorem \ref{mmmzgen}, there is a bijection between equivalence classes of simple transitive birepresentations of
$\widetilde{\cG_A}$
with apex $\widetilde{\J}_0$ and equivalence classes of simple transitive birepresentations of $\widetilde{\cG_A}_\ast$ with apex the unique two-sided cell $\H_{00}$. However, by Lemma \ref{vecGequiv}, the latter is biequivalent to $\cV ec_G$ and its simple transitive birepresentations are in bijection with pairs $(K,\omega)$, where $K$ is a subgroup of $G$ and $\omega\in H^2(K,\Bbbk^*)$ by \cite[Theorem~2]{Os}.
\end{proof}

\begin{remark}
Observe that Theorem \ref{thm11} implies that for a finite-dimensional radically graded basic  Hopf algebra $A$, the associated bicategory of symmetric bimodules $\cG_A$ only has finitely many simple transitive birepresentations up to equivalence. By contrast, the bicategory $\cH_A$, which by Theorem \ref{HAinGA} can be viewed as a $1$-full subbicategory of $\cG_A$, generally has infinitely many non-equivalent simple transitive birepresentations, see e.g. \cite[Theorem 4.10]{EO}. It would be interesting to investigate $1$-full subbicategories of $\cG_A$ which contain $\cH_A$ and try to determine where the jump from finitely to infinitely many equivalence classes of simple transitive birepresentations occurs.
\end{remark}

K.H.: {\tt katherina.hristova\symbol{64}gmail.com}

V.M.: {\tt v.miemietz\symbol{64}uea.ac.uk} \\School of Mathematics, University of East Anglia,
Norwich NR4 7TJ, UK.

\end{document}